\numberwithin{equation}{section}
\newtheorem{prop}{Proposition}
\newtheorem{thm}[prop]{Theorem}
\newtheorem{cor}[prop]{Corollary}
\numberwithin{prop}{section}
\theoremstyle{definition}
\newtheorem{defn}[prop]{Definition}
\newtheorem{rmk}[prop]{Remark}
\newcommand{\lp}[1]{\left( #1 \right)}
\newcommand{\lb}[1]{\left[ #1 \right]}
\newcommand{\lo}[1]{\left| #1 \right|}
\newcommand{\del}{\partial}
\newcommand{\dt}{\frac{\partial}{\partial t}}
\newcommand{\brs}[1]{\left| #1 \right|}
\newcommand{\gD}{\Delta}
\newcommand{\gd}{\delta}
\newcommand{\ga}{\alpha}
\newcommand{\gb}{\beta}
\renewcommand{\bar}[1]{\overline{#1}}
\newcommand{\IP}[1]{\left<#1\right>}
\DeclareMathOperator{\Rc}{Rc}
\DeclareMathOperator{\Rm}{Rm}
\DeclareMathOperator{\tr}{tr}
\DeclareMathOperator{\diam}{diam}
\begin{document}

\title[Four-dimensional generalized Ricci flows with nilpotent symmetry]{Four-dimensional generalized Ricci flows with nilpotent symmetry}

\begin{abstract} We study solutions to generalized Ricci flow on four-manifolds with a nilpotent, codimension $1$ symmetry.  We show that all such flows are immortal, and satisfy type III curvature and diameter estimates.  Using a new kind of monotone energy adapted to this setting, we show that blowdown limits lie in a canonical finite-dimensional family of solutions.  The results are new for Ricci flow.
\end{abstract}

\author{Steven Gindi}
\address{Whitney Hall\\
                Binghamton University\\
                Binghamton, NY 13902}
\email{\href{mailto:sgindi@binghamton.edu}{sgindi@binghamton.edu}}

\author{Jeffrey Streets}
\address{Rowland Hall\\
         University of California, Irvine\\
         Irvine, CA 92617}
\email{\href{mailto:jstreets@uci.edu}{jstreets@uci.edu}}

\date{June 17, 2021}

\thanks{
JS was supported by the NSF via DMS-1454854.
}

\maketitle

\section{Introduction}

Given a smooth manifold $M$, a one-parameter family of pairs $(g_t, H_t)$ of Riemannian metrics and closed three-forms is a solution of \emph{generalized Ricci flow} (cf. \cite{garciafernStreets2020}) if
\begin{align*}
    \dt g =&\ -2 \Rc + \frac{1}{2} H^2,\\
    \dt H =&\ \gD_{d,g_t} H,
\end{align*}
where $H^2(X,Y) = \IP{i_X H, i_Y H}$, and $\gD_{d,g_t}$ denotes the Hodge Laplacian associated to $g_t$.  This system of equations is a natural extension of the Ricci flow equation with applications to complex geometry \cite{SG, PCF, PCFReg}, generalized geometry \cite{GF19,SeveraValach1,StreetsTdual,GKRF} and mathematical physics \cite{Friedanetal,OSW}.  A standard theme in understanding evolution equations is to analyze the long-time limiting behavior.  For Ricci flow, many global solutions exhibit collapse with bounded curvature after taking appropriate blowdown limits, and  such families of metrics acquire nilpotent symmetry groups (cf. \cite{CFG, Lott1, LottDR}, also recent work \cite{huang2020ricci}).  Our purpose in this paper is to describe solutions to generalized Ricci flow in four dimensions admitting a three-dimensional nilpotent symmetry group.  This class of solutions represents the lowest-dimensional setting, outside of local homogeneity, where a nonabelian, nilpotent symmetry can occur, and arises naturally in understanding blowdown limits of type III solutions in four dimensions.  Our first main result shows the global existence and type III curvature decay and diameter growth for solutions to generalized Ricci flow with this symmetry (cf. Theorem \ref{t:lte3+1} below).

\begin{thm} \label{t:mainthm1}
Let $(P, \bar{g}, \bar{H})$ be a Riemannian metric and closed three-form on the nilpotent, four-dimensional $\mathcal G$-principal bundle $P \to M$, where $M$ is compact and one-dimensional.  Then the solution to generalized Ricci flow with initial data $(\bar{g}, \bar{H})$ exists on $[0,\infty)$, and there is a universal constant $C$ so that the solution satisfies the type III estimates
\begin{align*}
    \sup_{M \times [0,\infty)} t\left( \brs{\bar{\Rm}} + \brs{\bar{H}}^2 \right) \leq&\ C, \qquad 
    \diam(g(t)) \leq \diam(g(0)) t^{\frac{1}{2}}.
\end{align*}
\end{thm}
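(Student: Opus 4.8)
The plan is to exploit the codimension-one nilpotent symmetry to reduce the flow to a parabolic system on the one-dimensional base, and then extract the three conclusions from the structure of that reduced system. Since $M$ is compact and one-dimensional we may take $M = S^1$. A $\GG$-invariant pair $(\bar g, \bar H)$ on $P$ is encoded by its reduced data along $M$: a metric $\phi$ on $S^1$, a curve of left-invariant inner products $h$ on the nilpotent Lie algebra $\mathfrak{n}$ (equivalently a positive-definite symmetric endomorphism along $M$), a connection one-form $A$ for $P \to M$, and the invariant components of $\bar H$. Substituting this ansatz into the generalized Ricci flow and using invariance, the system collapses to a coupled system for $(\phi, h, A, \bar H)$ in the single spatial variable on $S^1$; carrying out this reduction carefully is the content of the reduced equations (formalized below in Theorem \ref{t:lte3+1}), and I would take the resulting system as the analytic starting point.

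Second, I would prove long-time existence. After fixing the gauge by normalizing the base coordinate, the reduced system is strictly parabolic in the fiber metric $h$ and the base length, so short-time existence is standard. To rule out finite-time singularities I would derive a priori bounds on a fixed interval $[0,T]$: the nilpotency of $\mathfrak{n}$ gives algebraic control, via Milnor-type curvature formulas, of $\bar\Rm$ in terms of $h$ and the derivatives of $h$ and $A$, while controlling the smallest and largest eigenvalues of $h$ together with $\phi$ prevents the metric from degenerating or blowing up on $[0,T]$. The point is that $\brs{\bar{\Rm}}$ and $\brs{\bar{H}}^2$ stay finite on compact time intervals, so the flow extends past any finite $T$ and exists on $[0,\infty)$.

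Third, and this is the crux, I would establish the type III bound $t(\brs{\bar{\Rm}} + \brs{\bar{H}}^2) \leq C$. The natural tool is a maximum principle applied to the evolution of $\brs{\bar{\Rm}} + \brs{\bar{H}}^2$ on $S^1$: writing down the reaction-diffusion inequality it satisfies, the reaction terms are quadratic in curvature and in $\bar H$ and are constrained by the nilpotent bracket, and the goal is to show that $u = t(\brs{\bar{\Rm}} + \brs{\bar{H}}^2)$ obeys an inequality of the form $\del_t u \leq \gD u + \tfrac{1}{t}(\text{controlled})$ forcing $\sup u$ to remain bounded. I expect the main obstacle here to be the back-reaction of the three-form: the term $\tfrac12 \HH$ in the metric equation couples $\bar H$ to $\bar g$ and can a priori spoil the sign structure that makes pure nilpotent Ricci flow type III. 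I would handle this by treating the pair $(\bar{\Rm}, \bar H)$ simultaneously and exploiting that the reduced $\bar H$-energy is itself governed by the fiber geometry, so that the $\HH$ contribution is subordinate to, rather than dominant over, the curvature decay.

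Finally, the diameter estimate follows from the curvature decay together with explicit control of the reduced metric. Once the fiber metric $h$ and the base length are shown to grow at most linearly in $t$—which the reduced equations together with the $1/t$ curvature bound yield—the length of $S^1$ in $g(t)$, obtained by integrating $\sqrt{\phi}$ over the base, grows at most like $t^{1/2}$, and the bounded-curvature fibers contribute comparably, giving $\diam(g(t)) \leq \diam(g(0)) t^{1/2}$.
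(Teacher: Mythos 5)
Your reduction to a system over $S^1$ and your final integration of the base metric match the paper's setup, but the central step of your plan --- running a maximum principle directly on $u = t\left(\brs{\bar{\Rm}} + \brs{\bar H}^2\right)$ --- has a genuine gap that your outline cannot repair. Along (generalized) Ricci flow the curvature norm satisfies a reaction-diffusion inequality of the schematic form $(\del_t - \gD)\brs{\bar{\Rm}} \leq C\brs{\bar{\Rm}}^2 + \dots$ with a \emph{positive} quadratic reaction term; the maximum principle then gives only finite-time control, never $1/t$ decay, and nilpotency of the fibers does not change the sign structure at the level of $\brs{\bar{\Rm}}$. Nothing in your proposal produces the inequality $\del_t u \leq \gD u + t^{-1}(\text{controlled})$ that you posit, and since the result is new even for $H \equiv 0$, no classical Ricci flow fact fills the hole. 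The paper's mechanism is different and is the real content of the theorem: one identifies special \emph{first-order} quantities --- $\brs{[,]}^2$, $\brs{H^{\mathfrak G}}^2$, and $\brs{DG}^2 + \tr_g H^2$ --- each of which satisfies a heat equation with a \emph{negative} quadratic reaction term with explicit constant (Propositions \ref{PROPbra2}, \ref{PROPEVH}, \ref{PROPEVDG}), so the maximum principle yields the sharp decays of Corollary \ref{c:typeIIIestimates}, e.g.\ $\brs{[,]}^2 \leq \tfrac{2}{3t}$. Curvature is then reached at second order: since $\bar{\Rm}$ is algebraic in $D^2G$, $DG$, $[,]$, $H$ for invariant metrics (\cite{GindiStreets}, Theorem 2.12), it suffices to bound $t^2\brs{D^2G}^2$, which is done via the weighted quantity $\Phi = t^2\left(\brs{D^2G}^2 + \brs{DH}^2\right) + tA\left(\brs{DG}^2 + \tr_g H^2\right)$, crucially using that $S_B$ in Proposition \ref{PROPEVDG} contains the gradient terms $-\brs{D^2G}^2 - \brs{DH}^2$ up to $O(t^{-2})$ (Proposition \ref{p:curvatureev}), so that for $A$ large the cubic reaction terms are absorbed at a putative first maximum. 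Your proposal contains no analogue of these quantities or of this absorption, and without them the type III claim is unsupported.

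Two further points. Your long-time existence step is unsupported and effectively circular: you assert a priori eigenvalue bounds on $h$ and $\phi$ over $[0,T]$ with no mechanism for proving them, whereas in the paper global existence is a \emph{consequence} of the type III estimates, which are proved on every smooth existence interval and combined with the criterion that bounded curvature permits extension (\cite{Streetsexpent}; \cite{garciafernStreets2020}, Theorem 5.23). Finally, the diameter bound in the statement concerns only the induced base metric $g(t)$ on $M$ (see the remark following Theorem \ref{t:mainthm1}), so no fiber contribution enters; your claim that the fibers ``contribute comparably'' is also off the mark, since central fiber directions can in fact shrink (cf.\ Corollary \ref{CORLIMIT}). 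The correct derivation is immediate from the one-dimensional reduced equation $\del_t g = \tfrac{1}{2}\left(\brs{DG}^2 + \tr_g H^2\right) g \leq t^{-1} g$, whose integration gives $g_t \leq t\, g_0$, with the constant $1$ coming from the sharp bound of Corollary \ref{c:typeIIIestimates} rather than from any linear-growth heuristic.
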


\begin{rmk} 
\begin{enumerate}
\item This result is new even in the case of Ricci flow ($H \equiv 0$).
\item In the statement above $g_t$ is the one-parameter family of metrics on $M$ induced by the family of invariant metrics on $P$ (cf. \S \ref{s:SAB})
    \item The constant $C$ above is computable from our proof, although we do not carry this out.
\end{enumerate}
\end{rmk}

The proof of Theorem \ref{t:mainthm1} relies principally on several surprising maximum principles for certain first-order geometric quantities associated to the flow $(\bar{g}_t, \bar{H}_t)$.  By general theory, the solution to generalized Ricci flow will preserve the symmetry, and yields a time-dependent family of invariant metrics described by a metric $g$ on $M$, a family of metrics $G$ on the fibers, and a principal connection $A$.  These decomposed flow equations were derived in \cite{GindiStreets}, and we recall them and their canonical gauging  in \S \ref{s:SAB}.  In the setting of Theorem \ref{t:mainthm1}, we first show in Proposition \ref{PROPbra2} that the $G$-norm of the Lie bracket is a subsolution of the associated heat equation.  The square norm of the projection of $H$ onto the vertical space also obeys such an evolution equation, as does $\brs{DG}^2 + \tr_g H^2$.  These favorable evolution equations are in fact strong enough to imply $O(t^{-1})$ decay for these quantities with sharp constants (cf. Corollary \ref{c:typeIIIestimates}).  Using these sharp decay estimates we derive the general type III estimate claimed in Theorem \ref{t:mainthm1}, which in turn implies the global existence.

To address the limiting behavior at infinity, we first note that due to the type III curvature bound, it is natural to expect that (possibly collapsing) blowdown limits at infinity will exist by adapting compactness results for Ricci flow (\cite{HamiltonCompactness, Lott1,LottDR}) to generalized Ricci flow.  In light of the $O(t^{\frac{1}{2}})$ diameter growth, these blowdown limits will either collapse the base manifold to a point or converge to a solution defined again over a circle.  The second main result classifies the limits in the latter case.  We give here an informal statement, referring to Theorem \ref{THMLIMIT} and Corollary \ref{CORLIMIT} for the precise claims.

\begin{thm} \label{t:convthm} Let $(P, \bar{g}(t), \bar{H}(t))$ denote a solution to generalized Ricci flow as in Theorem \ref{t:mainthm1}.  Suppose $(P_{\infty}, \bar{g}_{\infty}(t), \bar{H}_{\infty}(t))$ is a blowdown limit at infinity, where $P_{\infty}$ is a four-dimensional $\mathcal G$-principal bundle $P_{\infty} \to M_{\infty}$, where $M$ is compact and one-dimensional.  Then $(P_{\infty}, \bar{g}_{\infty}(t), \bar{H}_{\infty})$ belongs to a canonical finite-dimensional family of solutions.
\end{thm}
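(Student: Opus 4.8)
The plan is to produce the blowdown limit by parabolic rescaling, extract it via an equivariant compactness theorem adapted to generalized Ricci flow, and then use the monotone energy to force the limit to be rigid. First I would fix a sequence $\lambda_j \to \infty$ and form the rescaled flows $\bar{g}_j(t) := \lambda_j^{-1}\bar{g}(\lambda_j t)$ and $\bar{H}_j(t) := \lambda_j^{-1}\bar{H}(\lambda_j t)$ (the scaling making the pair again a solution), which again carry the $\mathcal{G}$-symmetry. The type III estimate of Theorem \ref{t:mainthm1} is exactly invariant under this rescaling, so $\sup t(|\bar{\Rm}_j| + |\bar{H}_j|^2) \le C$ uniformly in $j$; in particular every rescaled flow has curvature bounded by $C/t$ on $(0,\infty)$, which is the input needed on each compact time interval bounded away from $0$.

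Next I would establish the compactness of the rescaled flows. Since these solutions generally collapse, I would adapt the equivariant Cheeger--Gromov--Hamilton compactness theory (\cite{HamiltonCompactness, Lott1, LottDR}) to generalized Ricci flow, carrying the closed three-form $\bar{H}$ alongside the metric and tracking the nilpotent frame bundle. The $O(t^{1/2})$ diameter growth yields the expected dichotomy: after rescaling, either the base circle shrinks to a point, or its length stays bounded below and the limit $(P_\infty, \bar{g}_\infty(t), \bar{H}_\infty(t))$ is again an invariant generalized Ricci flow over a compact one-dimensional base $M_\infty$. In the latter case, which is the case of the theorem, the limit is a solution on $(0,\infty)$ with globally bounded, type III curvature that retains the decomposed description by a base metric $g$, a family of fiber metrics $G$, and a principal connection $A$ from \S\ref{s:SAB}.

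The heart of the argument is the monotone energy. Being scale-invariant, its value along $\bar{g}_j$ at a fixed time $t$ equals its value along $\bar{g}$ at time $\lambda_j t$; its monotonicity together with the uniform bounds coming from the sharp decay estimates (Corollary \ref{c:typeIIIestimates}) shows that this value converges, as $j \to \infty$, to a finite limit independent of $t$. By the monotonicity formula, the time-derivative of the energy equals, up to sign, a nonnegative integral of square-norms of the first-order quantities (the $G$-norm of the Lie bracket, the vertical component of $\bar{H}$, and $|DG|^2 + \tr_g H^2$); this defect is therefore integrable in time over $(0,\infty)$ and must vanish identically on the blowdown limit. Hence the limit saturates the monotonicity formula and is a critical, self-similar solution, along which the first-order quantities are parallel and time-independent.

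Finally I would convert this rigidity into structure, using the decomposed flow of \S\ref{s:SAB}. The vanishing of the defect forces the bracket, the vertical part of $\bar{H}$, and $DG$ to be constant in space and time, which in the variables $(g,G,A)$ over $M_\infty$ reduces the flow to a self-similar evolution whose spatial profile solves an autonomous system on the circle. The solution space of this rigid system is finite-dimensional, parametrized by the finitely many invariants of the fiber metric together with the connection and the conformal scale on $M_\infty$, and this identifies the canonical finite-dimensional family, completing the proof. The step I expect to be the main obstacle is the compactness: making the equivariant limit rigorous for generalized Ricci flow under collapse, with the three-form present and the nilpotent symmetry controlled, and verifying that the monotone energy and its monotonicity formula pass correctly to the limit so that the defect genuinely vanishes.
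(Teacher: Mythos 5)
Your core mechanism is the same as the paper's: parabolic rescaling, scale invariance of the monotone energy $\mathcal{I}$ (Definition \ref{d:energy}), monotonicity plus a lower bound forcing $\lim_{t\to\infty}\mathcal{I}$ to exist, hence constancy of $\mathcal{I}$ along the limit, and then the monotonicity formula (Proposition \ref{p:Imonotone}) forcing $S_B = 0$ and $|DG|^2 + \tr_g H^2 = 2/t$ on the limit. Two remarks on the framing before the substantive issue. First, the compactness step you flag as the main obstacle is not actually needed for the statement as given: both Theorem \ref{t:convthm} and Theorem \ref{THMLIMIT} \emph{assume} the existence of the blowdown limit (the paper only remarks that existence is ``natural to expect'' by adapting \cite{HamiltonCompactness, Lott1, LottDR}), so your equivariant compactness program, while a reasonable project, is outside the scope of what must be proved. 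Second, your route to vanishing of the defect (integrability in time plus lower semicontinuity under the convergence) is workable but slightly heavier than the paper's, which simply observes that $\mathcal{I}$ is constant in $t$ along the limit solution and applies Proposition \ref{p:Imonotone} directly to the limit, which itself satisfies (\ref{EQ}).

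The genuine gap is in your final rigidity step. You claim the vanishing of the defect ``forces the bracket, the vertical part of $\bar{H}$, and $DG$ to be constant in space and time,'' so that the limit is ``a critical, self-similar solution'' with time-independent first-order quantities. This is false, and it misidentifies the canonical family. The condition $S_B = 0$ yields only \emph{spatial} constancy ($D\brs{[,]}^2 = 0$, $D\brs{H^{\mathfrak{G}}}^2 = 0$, $DG(\cdot,\cdot) = 0$ via Propositions \ref{PROPSB1} and \ref{PROPSB2}), together with $\tr_g H^2 = 0$, $\frac{\del A}{\del t} = 0$ and $\frac{\del H}{\del t} = 0$; but the saturation $|DG|^2 = \frac{2}{t}$ is manifestly time-dependent, and the fiber data continue to evolve nontrivially: by Corollary \ref{CORLIMIT} one has $\brs{[,]}^2 = (\tfrac{3}{2}t + C)^{-1}$ when $H^{\mathfrak{G}} = 0$, and $G$ scales like $((at+C)/(a+C))^{\pm 1/3}$ on $\mathfrak{G}_0$ and $Z(\mathfrak{G})$ respectively, with different exponents on the two subbundles. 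So the limit is \emph{not} self-similar (no fixed homothety of the full metric realizes this anisotropic power-law behavior); only the base metric expands homothetically, $g = t\, g|_{t=1}$, while $G$ solves the ODE system governing locally homogeneous generalized Ricci flow on $\mbox{Nil}_3$. The finite-dimensionality conclusion survives, but the family you describe (self-similar profiles of an autonomous system) is the wrong one; the actual derivation requires plugging the rigidity of Theorem \ref{THMLIMIT} back into the decomposed equations (\ref{EQ}) together with Propositions \ref{PROPbra2} and \ref{PROPEVH} to obtain the explicit ODEs of Corollary \ref{CORLIMIT}.
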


\begin{rmk} The canonical family above is essentially explicit, and we describe it here in the case of nonabelian ($\mbox{Nil}_3$) structure group.  The base metric $g$ expands homothetically in time, while the principal connections associated to $\bar{g}$ are fixed in time.  The three-form $\bar{H}$ is identified with a three-form on the Lie algebra which is fixed in time, and has norm constant in space.   At all times the fiber metric $G$ defines a harmonic map on the base space, and evolves according to a system of ODEs governing locally homogeneous solutions to generalized Ricci flow on $\mbox{Nil}_3$.  In the case of Ricci flow $(H \equiv 0)$ the geometric properties of these solutions are described in \cite{isenbergJackson}, whereas in the general case they appeared in the recent work \cite{Paradiso}.
\end{rmk}

\begin{rmk} Immortal solutions as in Theorem \ref{t:convthm} arise naturally in the general (i.e. without symmetry ansatz) analysis of blowdown limits of type III solutions to generalized Ricci flow on four-manifolds (cf. \cite{Lott1} Theorem 5.12), and blowdown limits in this symmetry class should in fact lie in the canonical family described in Theorem \ref{t:convthm}.
\end{rmk}

The proof of Theorem \ref{t:convthm} relies on a new monotone energy.  First recall that noncollapsed blowdown limits for generalized Ricci flow can be addressed using the expander entropy functional \cite{Streetsexpent, FIN}.  Modifications of the Ricci flow expanding entropy, to account for collapsing, were discovered in \cite{LottDR}, and used there to classify type III solutions to Ricci flow on three-manifolds with $O(t^{\frac{1}{2}})$ diameter growth.  These functionals were adapted to the setting of generalized Ricci flow in \cite{GindiStreets}, and used to prove rigidity and classification results for certain blowdown limits.  These functionals are defined on the base space of a principal bundle with \emph{nilpotent} structure group, although ultimately the key monotonicity formulae only hold for the case of an \emph{abelian} structure group, and the applicability of these functionals in the general case remains unclear.  In the setup of Theorem \ref{t:mainthm1}, the first case of nonabelian nilpotent symmetry (aside from locally homogeneous solutions on three-manifolds), building on the sharp maximum principles discussed above, we define a new monotone functional (cf. Definition \ref{d:energy}), which leads to rigidity of blowdown limits and the proof of Theorem \ref{t:convthm}.

\vskip 0.1in

\textbf{Acknowledgements}: The authors would like to thank John Lott for useful conversations.

\section{Setup and Background} \label{s:SAB}
\subsection{Differential and Geometric Structures on \texorpdfstring{$\mathfrak{G} \oplus TM$}{}} \label{SECDGS}

Let $(\bar{g},\bar{H})$ be an invariant metric and three-form on a $\mathcal{G}$--principal bundle $\pi:P \rightarrow M$ and let $\mathcal{E}=  \mathfrak{G} \oplus TM$, where $\mathfrak{G}$ is the adjoint bundle. We will define certain differential and geometric structures on $\mathcal{E}$
by using a correspondence between sections of $\mathcal{E}$ and invariant vector fields on $P$. To describe it, consider the isomorphism $\delta: TP \rightarrow \pi^{*}\mathcal{E}$ given by $\delta(Z)=  \{p,\bar{A}Z \} + \pi_{*}Z ,$ where $Z \in T_{p}P$ and $\bar{A}$ is the connection on $P$ associated with $\bar{g}$. If $e \in \Gamma(\mathcal{E})$ then 
 $\tilde{e}:=\delta^{-1}\pi^{*}e$ is an invariant section of $TP$, where $\pi^{*}e \in \Gamma(\pi^{*}\mathcal{E})$ is the pullback section of $e$. Moreover, if $Z$ is an invariant section of $TP$ then it is straightforward to show that $Z= \tilde{e}$ for some unique $e \in \Gamma(\mathcal{E})$.  This correspondence extends naturally to invariant sections of $(\otimes^{k}T^{*}P)\otimes (\otimes^{l}TP) $
and sections of $(\otimes^{k}\mathcal{E^{*}})\otimes (\otimes^{l}\mathcal{E})$. 

 Given the invariant structures $\bar{g}$ and $\bar{H}$ on $P$, this correspondence respectively yields the fiberwise metric $g_{_{\mathcal{E}}}=G\oplus g$ on $\mathcal{E}=\mathfrak{G} \oplus TM$ and  $H \in \Gamma(\Lambda^{3}\mathcal{E}^{*})$. We also obtain differential and  Lie algebroid structures on $\mathcal{E}$. First consider 

\begin{defn}
For $e_{1}, e_{2} \in \Gamma(\mathcal{E})$, define $[e_{1},e_{2}]$ to be the unique section of $\mathcal{E}$ that satisfies \[ \widetilde{[e_{1},e_{2}]}= [\tilde{e}_{1},\tilde{e}_{2}]_{Lie},\] where $[,]_{Lie}$ is the Lie bracket on $\Gamma(TP)$. 
\end{defn}

If we define $ \tau: \mathcal{E}\longrightarrow TM$ by $\tau(\eta + v)= v $, where $\eta \in \mathfrak{G}$,  we then have \cite{GindiStreets}: 

\begin{prop} \label{PROPLIE} Let $e_{1},e_{2} \in \Gamma(\mathcal{E})$ and $f\in C^{\infty}(M)$. 
\begin{itemize}
\item [1)] $\tau([e_{1},e_{2}]) = [\tau(e_{1}),\tau(e_{2})]_{Lie}$.
\item [2)] $[e_{1},fe_{2}]= f[e_{1},e_{2}] +\tau(e_{1})[f]e_{2}$.
\item [3)] $[,]$ on $\Gamma(\mathcal{E})$ satisfies the Jacobi identity. 
\end{itemize}
In other words, $\tau:(\mathcal{E}, [,]) \longrightarrow (TM, [,]_{Lie})$ is a Lie algebroid structure on $\mathcal{E}$.
\end{prop}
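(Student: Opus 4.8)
The plan is to transport the entire Lie-algebraic structure from the ordinary Lie bracket of vector fields on $P$ through the correspondence $e \mapsto \tilde{e}$. I would begin by recording three basic properties of this correspondence. First, $\tilde{\cdot}: \Gamma(\mathcal{E}) \to \Gamma(TP)$ is $\mathbb{R}$-linear and injective with image exactly the invariant vector fields, since $\delta$ is a fiberwise isomorphism and $\tilde{e} = \delta^{-1}\pi^* e$. Second, because $\delta^{-1}$ is $C^\infty(P)$-linear and $\pi^*(fe) = (\pi^* f)(\pi^* e)$, one has $\widetilde{fe} = (\pi^* f)\,\tilde{e}$ for $f \in C^\infty(M)$. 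Third, and this is the geometric heart of the matter, the field $\tilde{e}$ is $\pi$-related to $\tau(e)$: unpacking $\delta(\tilde{e}) = \pi^* e$ and comparing the $TM$-components in $\delta(Z) = \{p,\bar{A}Z\} + \pi_{*}Z$ gives $\pi_* \tilde{e}_p = \tau(e)_{\pi(p)}$ for every $p$. I would also note the well-definedness presupposed by the Definition: invariance is preserved under $[,]_{Lie}$ because each $(R_g)_*$ is an algebra automorphism of vector fields, so $[\tilde{e}_1, \tilde{e}_2]_{Lie}$ indeed lies in the image of $\tilde{\cdot}$.

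Property 1) is then immediate from naturality of the Lie bracket under smooth maps. Since $\tilde{e}_i$ is $\pi$-related to $\tau(e_i)$, the field $[\tilde{e}_1, \tilde{e}_2]_{Lie} = \widetilde{[e_1, e_2]}$ is $\pi$-related to $[\tau(e_1), \tau(e_2)]_{Lie}$. On the other hand, the third property above applied to the section $[e_1,e_2]$ shows the very same field $\widetilde{[e_1, e_2]}$ is $\pi$-related to $\tau([e_1, e_2])$. As $\pi$ is a surjective submersion, a field on $P$ can be $\pi$-related to at most one field on $M$, so $\tau([e_1, e_2]) = [\tau(e_1), \tau(e_2)]_{Lie}$.

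Property 2) follows from the Leibniz rule for the vector field bracket. Using $\widetilde{fe_2} = (\pi^* f)\tilde{e}_2$ and the product rule,
\[
\widetilde{[e_1, fe_2]} = [\tilde{e}_1, (\pi^* f)\tilde{e}_2]_{Lie} = (\pi^* f)[\tilde{e}_1, \tilde{e}_2]_{Lie} + \big(\tilde{e}_1(\pi^* f)\big)\tilde{e}_2.
\]
The $\pi$-relatedness gives $\tilde{e}_1(\pi^* f) = df(\pi_* \tilde{e}_1) = \pi^*\!\big(\tau(e_1)[f]\big)$, so the right-hand side equals $(\pi^* f)\widetilde{[e_1, e_2]} + \pi^*\!\big(\tau(e_1)[f]\big)\tilde{e}_2 = \widetilde{f[e_1,e_2] + \tau(e_1)[f]\,e_2}$. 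Injectivity of $\tilde{\cdot}$ then yields the claimed identity. For property 3) I would simply transport structure: applying $\widetilde{[\,,\,]} = [\,,\,]_{Lie}$ twice gives $\widetilde{[[e_1,e_2],e_3]} = [[\tilde{e}_1,\tilde{e}_2]_{Lie},\tilde{e}_3]_{Lie}$, and the cyclic sum of the right-hand side vanishes by the Jacobi identity for vector fields on $P$; injectivity forces the cyclic sum on $\mathcal{E}$ to vanish as well.

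I expect the only genuinely delicate point to be the careful verification of the relatedness $\pi_*\tilde{e} = \tau(e)$, together with the attendant bookkeeping between $C^\infty(P)$ and $C^\infty(M)$ (in particular the computation $\tilde{e}_1(\pi^* f) = \pi^*(\tau(e_1)[f])$). Once these are established, properties 1)--3) are formal consequences of the corresponding facts for $[\,,\,]_{Lie}$, and the proposition asserting that $\tau$ defines a Lie algebroid structure follows.
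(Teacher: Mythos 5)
Your proof is correct: the key points (the $\pi$-relatedness $\pi_*\tilde{e}=\tau(e)$, the identity $\widetilde{fe}=(\pi^*f)\tilde{e}$, and injectivity of $e\mapsto\tilde{e}$ onto invariant fields) are all verified accurately, and properties 1)--3) then follow formally from the corresponding facts for $[\,,\,]_{Lie}$ on $P$. The paper itself defers the proof of this proposition to \cite{GindiStreets}, and your argument of transporting the Lie-algebraic structure through the tilde correspondence is essentially the same approach taken there.
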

\begin{cor}
$D_{v}\eta:= [v,\eta]$ defines a connection $D$ on $\mathfrak{G}$.
\end{cor}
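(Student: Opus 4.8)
The plan is to verify directly the defining axioms of a linear connection on the vector bundle $\mathfrak{G} \to M$: that $D_v \eta := [v,\eta]$ takes values in $\mathfrak{G}$, is $\mathbb{R}$-bilinear, is tensorial (i.e. $C^{\infty}(M)$-linear) in the vector field slot $v$, and satisfies the Leibniz rule in the section slot $\eta$. Throughout I regard $v \in \Gamma(TM)$ as a section of $\mathcal{E}$ via the canonical inclusion $TM \hookrightarrow \mathfrak{G} \oplus TM = \mathcal{E}$, so that $\tau(v) = v$, and I regard $\eta \in \Gamma(\mathfrak{G})$ as a section of $\mathcal{E}$ lying in $\ker \tau$, so that $\tau(\eta) = 0$. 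All of the axioms should then follow formally from Proposition \ref{PROPLIE} together with the antisymmetry of the bracket on $\Gamma(\mathcal{E})$, which it inherits from the Lie bracket on $\Gamma(TP)$ used to define it.

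First I would check that $D$ genuinely lands in $\mathfrak{G}$, i.e. that $[v,\eta]$ has no $TM$-component. By part (1) of Proposition \ref{PROPLIE}, $\tau([v,\eta]) = [\tau(v), \tau(\eta)]_{Lie} = [v, 0]_{Lie} = 0$, so $[v,\eta] \in \ker \tau = \Gamma(\mathfrak{G})$, as required, and $\mathbb{R}$-bilinearity is immediate from the bilinearity of the bracket. The Leibniz rule in the section slot is then a direct application of part (2): for $f \in C^{\infty}(M)$ one computes $D_v(f\eta) = [v, f\eta] = f[v,\eta] + \tau(v)[f]\,\eta = f\,D_v\eta + v[f]\,\eta$, using $\tau(v) = v$.

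The remaining and, I expect, only subtle point is tensoriality in $v$. Part (2) supplies the Leibniz rule only in the second slot, so I would first transfer to the first slot using antisymmetry: $[fv, \eta] = -[\eta, fv] = -\big(f[\eta, v] + \tau(\eta)[f]\, v\big) = f[v,\eta] - \tau(\eta)[f]\,v$. This is precisely where the hypothesis $\eta \in \Gamma(\mathfrak{G})$ is essential: since $\tau(\eta) = 0$, the inhomogeneous term vanishes and $D_{fv}\eta = f\,D_v\eta$. Had $\eta$ been permitted a nontrivial base component, the operation would fail to be $C^{\infty}(M)$-linear in $v$, so this cancellation is the crux of why the definition yields an honest connection rather than merely a first-order operator. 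Assembling these computations verifies every connection axiom and completes the proof.
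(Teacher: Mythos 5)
Your proof is correct and matches the paper's intent: the corollary is stated as an immediate consequence of Proposition \ref{PROPLIE}, and your verification of the connection axioms---membership in $\mathfrak{G}$ via part (1), the Leibniz rule via part (2), and $C^{\infty}(M)$-linearity in $v$ via antisymmetry of the bracket together with $\tau(\eta)=0$---is exactly the deduction the paper leaves implicit. Your observation that tensoriality in $v$ hinges on $\eta$ having no $TM$-component is the right way to see why the definition works.
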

 Let $F$ be the section of $\Lambda^{2}T^{*}M\otimes \mathfrak{G}$ that corresponds to the curvature $\bar{F}:=d\bar{A}+\frac{1}{2}[\bar{A},\bar{A}]$ using $\delta$. The following gives more properties of the Lie algebroid bracket  \cite{GindiStreets}.

\begin{prop} \label{PROPBRA} Let $s:U \longrightarrow P$ be a local section of $P$, $x,y \in \mathfrak{g}$ and $v,w \in \Gamma(TM)$.
\begin{itemize}
\item[1)] $[,]$ restricts to a Lie bracket on each fiber of $\mathfrak{G}$.
\item[2)] $[\{s,x\}, \{s,y\}]=-\{s,[x,y]\}.$
\item[3)] $D_{v}\{s,x\} =\{s, [(s^{*}\bar{A})v, x]\}.$
\item[4)] $[v,w]= [v,w]_{Lie} - F(v,w)$.
\end{itemize}
\end{prop}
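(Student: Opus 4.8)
The plan is to prove each identity by passing through the correspondence $e \mapsto \tilde{e} = \delta^{-1}\pi^{*}e$ and computing the relevant bracket of invariant vector fields on $P$, each of which reduces to a standard fact about principal connections. First I would record explicit descriptions of the two kinds of invariant fields involved, which are the only nontrivial inputs. For a vertical section $\eta = \{s,x\}$, unwinding $\delta(\tilde{\eta}_{p}) = \eta_{\pi(p)}$ at $p = s(m)g$ forces $\bar{A}(\tilde{\eta}_{p}) = \mathrm{Ad}_{g^{-1}}x$ and $\pi_{*}\tilde{\eta}_{p} = 0$; hence $\tilde{\eta}$ is vertical and, restricted to a fiber $\pi^{-1}(m) \cong G$ via $g \mapsto s(m)g$, corresponds to the \emph{right}-invariant field $x^{R}$ on $G$ with $x^{R}_{e} = x$. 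For a horizontal section $v \in \Gamma(TM) \subset \Gamma(\mathcal{E})$, the conditions $\bar{A}(\tilde{v}) = 0$ and $\pi_{*}\tilde{v} = v$ identify $\tilde{v} = v^{H}$ with the $\bar{A}$-horizontal lift. Everything else is bookkeeping.

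For part 2) both $\widetilde{\{s,x\}}$ and $\widetilde{\{s,y\}}$ are vertical, and by Proposition \ref{PROPLIE}(1) their bracket is again vertical, so it may be computed fiberwise. On $\pi^{-1}(m) \cong G$ the fields are $x^{R}, y^{R}$, and the standard identity $[x^{R},y^{R}] = -[x,y]^{R}$ produces exactly the minus sign in $[\{s,x\},\{s,y\}] = -\{s,[x,y]\}$. Part 1) is then essentially a corollary: Proposition \ref{PROPLIE}(2), with $\tau(\eta) = 0$ on $\mathfrak{G}$, shows $[\cdot,\cdot]$ is $C^{\infty}(M)$-bilinear on vertical sections and hence a genuine fiberwise operation; it is skew and satisfies Jacobi by Proposition \ref{PROPLIE}(3); and part 2) identifies it on each fiber with $(\mathfrak{g}, -[\cdot,\cdot])$, so each fiber of $\mathfrak{G}$ becomes a Lie algebra.

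For part 3) I would work in the local trivialization $U \times G \to \pi^{-1}(U)$, $(m,g) \mapsto s(m)g$. Writing $\mathcal{A} = s^{*}\bar{A}$, the two inputs above become $\widetilde{\{s,x\}} = (0, x^{R})$ and $v^{H} = (v, -\mathcal{A}(v)^{R})$, using the standard trivialized form of the horizontal distribution. A direct computation then shows $[v^{H}, \widetilde{\{s,x\}}]$ has vanishing $TM$-component, consistent with $\tau(\{s,x\}) = 0$ and Proposition \ref{PROPLIE}(1), while its $G$-component equals $[-\mathcal{A}(v)^{R}, x^{R}] = [\mathcal{A}(v), x]^{R}$, again via $[\alpha^{R},\beta^{R}] = -[\alpha,\beta]^{R}$. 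Since $(0,\zeta^{R})$ corresponds to $\{s,\zeta\}$, this reads $D_{v}\{s,x\} = [v,\{s,x\}] = \{s,[(s^{*}\bar{A})v, x]\}$.

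Finally, part 4) is the Cartan structure equation in disguise: $\tilde{v} = v^{H}$ and $\tilde{w} = w^{H}$ are horizontal, so for $\bar{F} = d\bar{A} + \tfrac{1}{2}[\bar{A},\bar{A}]$ one has $\bar{A}([v^{H},w^{H}]) = -\bar{F}(v^{H},w^{H})$ while $\pi_{*}[v^{H},w^{H}] = [v,w]_{Lie}$. Applying $\delta$ gives $[v,w] = \{p, -\bar{F}(v^{H},w^{H})\} + [v,w]_{Lie}$, and since $F$ is by definition the $\mathfrak{G}$-valued form corresponding to $\bar{F}$ under $\delta$, namely $F(v,w) = \{p, \bar{F}(v^{H},w^{H})\}$, this is precisely $[v,w] = [v,w]_{Lie} - F(v,w)$. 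I expect the main obstacle to be sign and convention consistency rather than genuine difficulty: the minus signs in parts 2) and 3) hinge on $\widetilde{\{s,x\}}$ being \emph{right}-invariant along the fibers (equivalently on the right action defining $P$), and the sign in part 4) on the curvature convention. The one truly computational step is the trivialized bracket in part 3), where the Maurer--Cartan and adjoint terms must be tracked carefully.
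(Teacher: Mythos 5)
Your proof is correct. Note that the paper itself gives no argument for Proposition \ref{PROPBRA} --- it is quoted from \cite{GindiStreets} --- so there is no in-paper proof to compare against; your write-up supplies the standard verification through the correspondence $e \mapsto \tilde{e} = \delta^{-1}\pi^{*}e$, which is exactly the mechanism the paper's setup is built on. The delicate points all check out: from $\{s(m)g,\mathrm{Ad}_{g^{-1}}x\} = \{s(m),x\}$ one indeed gets $\bar{A}(\tilde{\eta}_{p}) = \mathrm{Ad}_{g^{-1}}x$ at $p = s(m)g$, and since the fundamental field $\xi^{*}$ corresponds to $\xi^{L}$ under $g \mapsto s(m)g$ while $L_{g*}\mathrm{Ad}_{g^{-1}} = R_{g*}$, your identification $\widetilde{\{s,x\}} \leftrightarrow x^{R}$ is right, which forces the minus sign in part 2); in part 3) the trivialized bracket $[(v,0),(0,x^{R})] = 0$ (as $x^{R}$ is $m$-independent) and $[-(0,\mathcal{A}(v)^{R}),(0,x^{R})] = (0,[\mathcal{A}(v),x]^{R})$ (no $m$-derivative of $\mathcal{A}(v)$ enters, since $x^{R}$ has vanishing $TM$-component) give exactly $\{s,[(s^{*}\bar{A})v,x]\}$; and in part 4) the Cartan identity $\bar{A}([v^{H},w^{H}]) = -\bar{F}(v^{H},w^{H})$ together with $F(v,w)|_{m} = \{p,\bar{F}(v^{H},w^{H})\}$ yields the stated sign. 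The only cosmetic remark: in part 1) skew-symmetry is not literally contained in Proposition \ref{PROPLIE}(3); it follows directly from the defining relation $\widetilde{[e_{1},e_{2}]} = [\tilde{e}_{1},\tilde{e}_{2}]_{Lie}$, which you could say explicitly.
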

Let $[\mathfrak{G},\mathfrak{G}]= \cup_{m \in M} [ \mathfrak{G}|_{m},\mathfrak{G}|_{m}]$, $Z(\mathfrak{G})=\cup_{m \in M} Z(\mathfrak{G}|_{m})$,  where $Z(\mathfrak{G}|_{m})$ is the center of the Lie algebra $(\mathfrak{G}|_{m}, [,])$, and let $Z(\mathfrak{g})$ be the center of the Lie algebra of $\mathcal{G}$.

\begin{prop} \label{PROPZ}
\mbox{}
\begin{itemize}
\item[1)] $D$ restricts to a connection on $Z(\mathfrak{G})$, which is a vector bundle over $M$.
\item[2)] If $x \in Z(\mathfrak{g})$ and $s$ is a local section of $P$ then 
     $\{s,x\}$ is a local section of $Z(\mathfrak{G})$ and $D\{s,x\}=0$.
\item[3)] When $\mathcal{G}$ is nonabelian, nilpotent and three-dimensional, $Z(\mathfrak{G})=[\mathfrak{G},\mathfrak{G}]$.     
\end{itemize}
\end{prop}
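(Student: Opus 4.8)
The three parts are consequences of the algebraic structure recorded in Proposition \ref{PROPBRA}, organized around a single principle: a local section $s$ of $P$ induces at each point $m$ a linear isomorphism $x \mapsto \{s,x\}$ from $\mathfrak{g}$ onto $\mathfrak{G}|_m$, and by Proposition \ref{PROPBRA}(2) this map intertwines the bracket of $\mathfrak{g}$ with the fiber bracket up to sign, hence is a Lie algebra anti-isomorphism. Since the center and the derived subalgebra of a Lie algebra coincide with those of its opposite, this map carries $Z(\mathfrak{g})$ onto $Z(\mathfrak{G}|_m)$ and $[\mathfrak{g},\mathfrak{g}]$ onto $[\mathfrak{G}|_m,\mathfrak{G}|_m]$. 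I would begin by recording this, together with the remark that the fiber bracket is $C^{\infty}$-bilinear: for $\eta_1,\eta_2 \in \Gamma(\mathfrak{G})$ the anchor $\tau(\eta_1)$ vanishes, so the Leibniz rule of Proposition \ref{PROPLIE}(2) reduces to tensoriality, and centrality of a section can therefore be tested pointwise against any spanning set.

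With this in hand I would prove (2) first. Given $x \in Z(\mathfrak{g})$ and any $\eta = \{s,y\} \in \mathfrak{G}|_m$, Proposition \ref{PROPBRA}(2) gives $[\{s,x\},\{s,y\}] = -\{s,[x,y]\} = 0$; since such $\{s,y\}$ span the fiber, $\{s,x\}$ is central, i.e. a section of $Z(\mathfrak{G})$. Moreover Proposition \ref{PROPBRA}(3) gives $D_v\{s,x\} = \{s,[(s^*\bar{A})v,x]\} = 0$, so $\{s,x\}$ is $D$-parallel. Part (1) then follows: fixing a basis $x_1,\dots,x_k$ of $Z(\mathfrak{g})$, the sections $\{s,x_i\}$ are pointwise linearly independent and, by the first paragraph, span $Z(\mathfrak{G}|_{m'})$ at every nearby $m'$, so they form a smooth local frame exhibiting $Z(\mathfrak{G})$ as a rank-$k$ subbundle of $\mathfrak{G}$. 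As this frame is $D$-parallel, any section $\eta = \sum_i f_i \{s,x_i\}$ of $Z(\mathfrak{G})$ satisfies $D_v\eta = \sum_i v[f_i]\{s,x_i\} \in \Gamma(Z(\mathfrak{G}))$, so $D$ preserves $Z(\mathfrak{G})$ and restricts to a connection there.

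For (3), by the first paragraph it suffices to prove the purely Lie-algebraic identity $Z(\mathfrak{g}) = [\mathfrak{g},\mathfrak{g}]$ for the structure algebra $\mathfrak{g}$, as the identification then propagates fiberwise and is independent of the chosen section. A nonabelian nilpotent three-dimensional Lie algebra is necessarily the Heisenberg algebra, with basis $X,Y,Z$ satisfying $[X,Y]=Z$ and all remaining brackets zero; here $[\mathfrak{g},\mathfrak{g}] = \operatorname{span}\{Z\} = Z(\mathfrak{g})$, giving the claim. One may also argue intrinsically: nonabelianness and nilpotency force $1 \le \dim Z(\mathfrak{g}) \le 2$, and $\dim Z(\mathfrak{g}) = 2$ would make all brackets of a basis vanish, a contradiction, so $\dim Z(\mathfrak{g}) = 1$; then $\mathfrak{g}/Z(\mathfrak{g})$ is two-dimensional and nilpotent, hence abelian, whence $[\mathfrak{g},\mathfrak{g}] \subseteq Z(\mathfrak{g})$, and nonvanishing of $[\mathfrak{g},\mathfrak{g}]$ forces equality. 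The only genuine subtlety in the whole proposition is the bookkeeping ensuring that the fiberwise center and derived subalgebra are section-independent and of constant rank, which is exactly what the anti-isomorphism of the first paragraph supplies; I therefore expect no serious obstacle beyond setting up that identification carefully before invoking it in each of the three parts.
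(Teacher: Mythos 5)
Your proposal is correct, and Parts 2) and 3) coincide with the paper's proof: the same appeal to Proposition \ref{PROPBRA}(2) and (3) gives centrality and $D$-parallelism of $\{s,x\}$ for $x \in Z(\mathfrak{g})$, and the same identification of each fiber with the Heisenberg algebra settles Part 3) (your intrinsic dimension count, $\dim Z(\mathfrak{g})=1$ and $[\mathfrak{g},\mathfrak{g}]\subseteq Z(\mathfrak{g})$ via the abelian quotient, is a self-contained substitute for the paper's bare citation of that fact). Where you genuinely diverge is Part 1): the paper argues intrinsically through the Jacobi identity of the algebroid bracket, writing $[D\eta,\eta'] = D[\eta,\eta'] - [\eta,D\eta'] = 0$ for $\eta \in \Gamma(Z(\mathfrak{G}))$, $\eta' \in \Gamma(\mathfrak{G})$, so that $D\eta$ is again central with no frame needed; you instead build the $D$-parallel local frame $\{s,x_i\}$ from a basis of $Z(\mathfrak{g})$ and expand an arbitrary section against it. The paper's route is shorter, but yours buys something the paper leaves implicit: the paper's one-line assertion that $Z(\mathfrak{G})$ is a vector bundle ``by Part 2)'' really does require your observation that the anti-isomorphism $x \mapsto \{s,x\}$ carries $Z(\mathfrak{g})$ \emph{onto} $Z(\mathfrak{G}|_m)$, so that the parallel sections form a frame of constant rank; your first paragraph, including the remark that vanishing anchor makes the fiber bracket tensorial so centrality is a pointwise condition, supplies exactly the bookkeeping the paper's terse proof presupposes. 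Both arguments are complete and compatible, and your version is arguably the more careful of the two on the subbundle question.
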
 
\begin{proof}

First note that, by Part 2), which is proved below, $Z(\mathfrak{G})$ is a vector bundle over $M$. Now let $\eta$ and $\eta'$ be sections of $Z(\mathfrak{G})$ and $\mathfrak{G}$, respectively. Using Part 3) of Proposition \ref{PROPLIE}, we have \[ [D\eta,\eta']=D[\eta,\eta'] - [\eta,D\eta']=0,\]
which shows that $D$ restricts to a connection on $Z(\mathfrak{G})$.

To prove Part 2), let $s$ be a local section of $P$, $x \in Z(\mathfrak{g})$ and $y \in \mathfrak{g}$. Using Proposition \ref{PROPBRA}, $[\{s,x \}, \{s,y\}]= -\{s, [x,y]\}=0$, which shows that $\{s,x\}$ is a local section of $Z(\mathfrak{G})$. 
Moreover, given $v \in TM$, $D_{v}\{s,x\}=\{s, [(s^{*}\bar{A})v, x] \}= 0,$ where we used Proposition \ref{PROPBRA}.

Lastly, in the case when $\mathcal{G}$ is nonabelian, nilpotent and three-dimensional, $ Z(\mathfrak{G})= [\mathfrak{G},\mathfrak{G}]$ because $(\mathfrak{G}|_{m},[,])$ is isomorphic to the three-dimensional Heisenberg algebra, for each $m \in M$. 
\end{proof}

Using the fiberwise metric $G$ on $\mathfrak{G}$, we have 

\begin{prop} \label{PROPSBASIS}
In the case when $\mathcal{G}$ is nonabelian, nilpotent and three-dimensional, there exists an orthonormal frame  $\{\eta_{i}\}_{i=1,2,3}$ for $(\mathfrak{G},G)$ such that $\eta_{3}$ and $[\eta_{1}, \eta_{2}]$ are nowhere zero local sections of $Z(\mathfrak{G})$.
\end{prop}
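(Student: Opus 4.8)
The plan is to build the frame outward from the center, exploiting that each fiber $(\mathfrak{G}|_m, [,])$ is the three-dimensional Heisenberg algebra, whose center is one-dimensional and coincides with $[\mathfrak{G}|_m,\mathfrak{G}|_m]$ by Proposition \ref{PROPZ}, Part 3). Consequently $Z(\mathfrak{G})$ is a \emph{rank-one} subbundle of $\mathfrak{G}$: it is a vector bundle over $M$ by Proposition \ref{PROPZ}, Part 1), and its fibers are one-dimensional. Being a line bundle, it is locally trivial, so on a suitable neighborhood $U \subset M$ I would choose a nowhere-zero local section of $Z(\mathfrak{G})$ and normalize it using $G$ to obtain a unit-length local section $\eta_3$ of $Z(\mathfrak{G})$.

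Next I would pass to the $G$-orthogonal complement $W := Z(\mathfrak{G})^{\perp}$, a rank-two subbundle of $\mathfrak{G}$. Shrinking $U$ if necessary, a Gram--Schmidt procedure produces a local $G$-orthonormal frame $\{\eta_1,\eta_2\}$ of $W$. Together with $\eta_3$ this yields a local orthonormal frame $\{\eta_1,\eta_2,\eta_3\}$ for $(\mathfrak{G},G)$, and by construction $\eta_3$ is a nowhere-zero local section of $Z(\mathfrak{G})$, settling the first requirement.

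It then remains to verify that $[\eta_1,\eta_2]$ is a nowhere-zero section of $Z(\mathfrak{G})$. Since the bracket restricts to each fiber by Proposition \ref{PROPBRA}, Part 1), at any point $m \in U$ the element $[\eta_1,\eta_2]|_m$ lies in $[\mathfrak{G}|_m,\mathfrak{G}|_m] = Z(\mathfrak{G}|_m)$, so $[\eta_1,\eta_2]$ is automatically a section of $Z(\mathfrak{G})$. For nonvanishing I would argue pointwise: because $\eta_3|_m$ lies in the center of the fiber algebra, $[\eta_1,\eta_3]|_m = [\eta_2,\eta_3]|_m = 0$; if in addition $[\eta_1,\eta_2]|_m = 0$, then every bracket among the basis vectors $\{\eta_1|_m,\eta_2|_m,\eta_3|_m\}$ would vanish, forcing $(\mathfrak{G}|_m,[,])$ to be abelian, contradicting the assumption that $\mathcal{G}$ is nonabelian. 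Hence $[\eta_1,\eta_2]|_m \neq 0$ for every $m \in U$.

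The construction is essentially a sequence of standard local-framing steps — trivializing the central line bundle, normalizing, and completing to an orthonormal frame — so I do not anticipate a serious analytic obstacle. The only genuinely substantive point is the final nonvanishing of $[\eta_1,\eta_2]$, and its resolution is immediate from the rigidity of the Heisenberg bracket: any basis adapted to the central line necessarily pairs its two transverse vectors into a nonzero central element. Thus the main thing to keep careful track of is the fiberwise algebraic structure supplied by Propositions \ref{PROPZ} and \ref{PROPBRA}.
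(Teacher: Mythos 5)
Your proposal is correct and follows essentially the same route as the paper's proof: choose an orthonormal frame with $\eta_3$ spanning the rank-one bundle $Z(\mathfrak{G})$, then conclude that $[\eta_1,\eta_2]$ is a nowhere-zero section of $Z(\mathfrak{G}) = [\mathfrak{G},\mathfrak{G}]$ from Proposition \ref{PROPZ}, Part 3) and the nonabelianness of $\mathcal{G}$. You merely spell out the standard local-framing steps and the pointwise nonvanishing argument that the paper leaves implicit.
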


\begin{proof}
Choose an orthonormal frame $\{\eta_{i}\}_{i=1,2,3}$ for $(\mathfrak{G},G)$ such that $\eta_{3}$ is a local section of  $Z(\mathfrak{G})$. Using Part 3) of Proposition \ref{PROPZ} and that $\mathcal{G}$ is nonabelian, $[\eta_{1}, \eta_{2}]$ is a nowhere zero local section of $[\mathfrak{G},\mathfrak{G}]= Z(\mathfrak{G})$. 
\end{proof}

Given the Lie algebroid structure $\tau:(\mathcal{E}, [,]) \longrightarrow (TM, [,]_{Lie})$, we also define the following exterior derivative which squares to zero.

\begin{defn} \label{DEFD1}
Define $d: \Gamma(\Lambda^{k}\mathcal{E^{*}})\longrightarrow \Gamma(\Lambda^{k+1}\mathcal{E^{*}})$ by 
\begin{align*}
d\sigma(e_{1},...,e_{k+1})=&  \sum_{1\leq i\leq k+1}(-1)^{i-1}\tau(e_{i})[\sigma(e_{1},...,\hat{e_{i}},...,e_{k+1})]   \\   
 &\  +  \sum_{1\leq i<j \leq k+1} (-1)^{i+j}\sigma([e_{i},e_{j}],e_{1},...,\hat{e_{i}},...,\hat{e_{j}},...,e_{k+1}).
\end{align*}
\end{defn}

In the upcoming sections we will need the following three results.
\begin{prop} \label{PROPDB1} Let $B \in \Gamma(\Lambda^{2}\mathcal{E}^{*})$ and $e_{i}= \eta_{i}+v_{i} \in \Gamma(\mathcal{E}= \mathfrak{G} \oplus TM)$.
\begin{align*}
 dB(e_{1},e_{2},e_{3})= D_{v_{1}}B(e_{2},e_{3}) +B(F(v_{1},v_{2}),e_{3}) -B([\eta_{1},\eta_{2}],e_{3}) + \mathrm{cyclic}(1,2,3).
\end{align*} 
\end{prop}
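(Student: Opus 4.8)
My plan is to compute $dB(e_1,e_2,e_3)$ directly from Definition \ref{DEFD1} and then reorganize the result into the claimed cyclic form. Setting $k=2$ produces three ``derivative'' terms $\sum_i(-1)^{i-1}\tau(e_i)[B(\dots\widehat{e_i}\dots)]$ and three ``bracket'' terms $\sum_{i<j}(-1)^{i+j}B([e_i,e_j],\dots)$. Since $\tau(e_i)=v_i$ and $B$ is antisymmetric, I would first massage both groups into manifestly cyclic expressions: the derivative terms become $\sum_{\mathrm{cyc}}v_1[B(e_2,e_3)]$, and, using antisymmetry of both $B$ and the bracket to flip the middle term, the three bracket terms collapse to $-\sum_{\mathrm{cyc}}B([e_1,e_2],e_3)$.

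The next step is to decompose each bracket along $\mathcal{E}=\mathfrak{G}\oplus TM$. Writing $e_i=\eta_i+v_i$ and using additivity of the algebroid bracket together with Proposition \ref{PROPBRA}(1),(4) and the definition $D_v\eta=[v,\eta]$, I obtain
\[ [e_1,e_2]=[\eta_1,\eta_2]+D_{v_1}\eta_2-D_{v_2}\eta_1+[v_1,v_2]_{Lie}-F(v_1,v_2). \]
Substituting this into $-\sum_{\mathrm{cyc}}B([e_1,e_2],e_3)$ already isolates the two target contributions $\sum_{\mathrm{cyc}}\left[-B([\eta_1,\eta_2],e_3)+B(F(v_1,v_2),e_3)\right]$, leaving a remainder built from the $D_{v_i}\eta_j$ terms and the $[v_i,v_j]_{Lie}$ terms.

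To produce the remaining right-hand side I would read $D_{v_1}B(e_2,e_3)$ as the covariant derivative $(D_{v_1}B)(e_2,e_3)$ of the $2$-form $B$, taking $D$ on $\mathcal{E}$ to be the direct sum of the connection on $\mathfrak{G}$ from the Corollary and the Levi-Civita connection $\nabla$ of $(M,g)$. The Leibniz rule then gives $v_1[B(e_2,e_3)]=(D_{v_1}B)(e_2,e_3)+B(D_{v_1}e_2,e_3)+B(e_2,D_{v_1}e_3)$, whose covariant-derivative pieces sum to the claimed $\sum_{\mathrm{cyc}}(D_{v_1}B)(e_2,e_3)$.

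The crux, and the only place I expect genuine work, is checking that all leftover connection terms cancel. Expanding $D_{v_i}e_j=D_{v_i}\eta_j+\nabla_{v_i}v_j$, the $\mathfrak{G}$-valued pieces $B(D_{v_i}\eta_j,\cdot)$ coming from the Leibniz rule cancel against those produced by the bracket expansion, once antisymmetry of $B$ is used to line up the two cyclic sums with opposite signs. What survives is purely base-directed, namely $\sum_{\mathrm{cyc}}\left[B(\nabla_{v_1}v_2,e_3)+B(e_2,\nabla_{v_1}v_3)-B([v_1,v_2]_{Lie},e_3)\right]$, and this vanishes precisely because $\nabla$ is torsion-free: writing $[v_1,v_2]_{Lie}=\nabla_{v_1}v_2-\nabla_{v_2}v_1$ and applying antisymmetry of $B$ once more collapses the three cyclic sums to zero. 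I therefore expect the main obstacle to be purely organizational---keeping the six cyclic sums and their signs straight and recognizing torsion-freeness as the single structural input---since every other step is forced by the definitions.
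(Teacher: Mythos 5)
Your proof is correct, and in fact the paper itself gives no argument for Proposition \ref{PROPDB1} (it defers to \cite{GindiStreets}); your direct expansion of Definition \ref{DEFD1}, the decomposition $[e_1,e_2]=[\eta_1,\eta_2]+D_{v_1}\eta_2-D_{v_2}\eta_1+[v_1,v_2]_{Lie}-F(v_1,v_2)$ via Proposition \ref{PROPBRA} and the corollary defining $D$, and the Leibniz rule for the extended connection $D=D\oplus\nabla^{L}$ (the reading of $D_{v_1}B$ made explicit in \S 2.2) constitute precisely the expected direct verification. Your two structural points are the right ones: the $B(D_{v_i}\eta_j,\cdot)$ terms from the Leibniz rule cancel pairwise against those from the bracket expansion, and the leftover base-directed terms vanish exactly because $\nabla^{L}$ is torsion-free.
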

The proof of this proposition is given in \cite{GindiStreets}. Next, we have
\begin{prop} \label{PROPdH}Suppose $\mathcal{G}$ is nilpotent and three-dimensional and $H \in \Gamma(\Lambda^{3}\mathcal{E}^{*})$. Then
 \[dH(v,\eta_{1},\eta_{2},\eta_{3})=D_{v}H(\eta_{1},\eta_{2},\eta_{3}),\] where $v \in TM$ and $\eta_{i} \in \mathfrak{G}$.
 \end{prop}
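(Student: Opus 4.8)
The plan is to apply the general formula for the Lie algebroid exterior derivative from Definition \ref{DEFD1} to the specific four-tuple $(v, \eta_1, \eta_2, \eta_3)$, and then show that all but one of the resulting terms vanish by exploiting the nilpotency of $\mathcal{G}$. First I would expand $dH(v,\eta_1,\eta_2,\eta_3)$ using the definition. Since $\tau(\eta_i) = 0$ for the vertical vectors $\eta_i$ and $\tau(v) = v$, the only surviving term in the first (derivative) sum is the one where the distinguished argument $v$ is extracted, namely $\tau(v)[H(\eta_1,\eta_2,\eta_3)] = D_v H(\eta_1,\eta_2,\eta_3)$. This is precisely the term we want to be left with, so the entire task reduces to showing that the second (bracket) sum vanishes.

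The bracket sum contains terms of two types: those involving $[v,\eta_i]$ and those involving $[\eta_i,\eta_j]$. For the terms of the first type, observe that by the corollary following Proposition \ref{PROPLIE} we have $[v,\eta_i] = D_v \eta_i \in \mathfrak{G}$, so each such term is of the form $\pm H(D_v\eta_i, \eta_j, \eta_k)$ with all three arguments vertical. For the terms of the second type, Proposition \ref{PROPBRA} Part 1) tells us that $[\eta_i,\eta_j]$ again lies in $\mathfrak{G}$, so these terms are of the form $\pm H([\eta_i,\eta_j], v)$ paired against two more vertical arguments — again all four arguments landing in the vertical space $\mathfrak{G}$ except for the single $v$. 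Wait: the bracket terms pairing two vertical vectors leave $v$ and one $\eta$ in the remaining slots, so these are of the form $H([\eta_i,\eta_j], \eta_k, v)$ — three vertical arguments and one $v$, not four vertical.

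The crux, then, is to argue that these remaining bracket terms either cancel in pairs or vanish individually using nilpotency. The key structural input is that $\mathcal{G}$ is nilpotent and three-dimensional, so each fiber is the Heisenberg algebra; by Proposition \ref{PROPZ} Part 3) and Proposition \ref{PROPSBASIS} one can choose a frame in which $[\eta_1,\eta_2]$ is proportional to the central element $\eta_3$, while $[\eta_1,\eta_3] = [\eta_2,\eta_3] = 0$. The cleanest route is to verify the identity on such a frame: the only nonzero fiberwise bracket is $[\eta_1,\eta_2] \in Z(\mathfrak{G})$, which collapses most of the bracket-sum terms, and the terms of the form $H(D_v\eta_i,\eta_j,\eta_k)$ should be handled by noting that the frame can be chosen so that the relevant connection terms and the central structure combine to force cancellation. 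I expect the main obstacle to be the careful bookkeeping of signs and the precise grouping of the bracket-type terms; the nilpotency is what makes the structure constants sparse enough that the antisymmetry of $H$ kills the surviving contributions, but assembling this cleanly — rather than a brute-force expansion of all $\binom{4}{2}$ bracket terms — is where the real care is needed.
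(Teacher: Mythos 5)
There is a genuine gap, and it occurs at the very first step: you assert that the surviving term of the derivative sum, $\tau(v)[H(\eta_1,\eta_2,\eta_3)] = v[H(\eta_1,\eta_2,\eta_3)]$, \emph{is} $D_vH(\eta_1,\eta_2,\eta_3)$, and conclude that "the entire task reduces to showing that the second (bracket) sum vanishes." This conflates the directional derivative of the scalar function $H(\eta_1,\eta_2,\eta_3)$ with the covariant derivative of the tensor $H$ evaluated on the frame. For a general frame these differ by exactly the connection terms: $D_vH(\eta_1,\eta_2,\eta_3) = v[H(\eta_1,\eta_2,\eta_3)] - H(D_v\eta_1,\eta_2,\eta_3) - H(\eta_1,D_v\eta_2,\eta_3) - H(\eta_1,\eta_2,D_v\eta_3)$. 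Consequently the bracket-sum terms of the form $\mp H([v,\eta_i],\eta_j,\eta_k) = \mp H(D_v\eta_i,\eta_j,\eta_k)$ must \emph{not} vanish — they are precisely what converts $v[H(\eta_1,\eta_2,\eta_3)]$ into $D_vH(\eta_1,\eta_2,\eta_3)$, which is how the paper's proof proceeds. Your proposal instead tries to kill them "using nilpotency," but nilpotency gives no control whatsoever over $D_v\eta_i$: the connection $D$ is induced by the principal connection $\bar{A}$ (Proposition \ref{PROPBRA}, Part 3), not by the Lie algebra structure, and for a generic adapted frame these terms are nonzero. The vague closing claim that "the frame can be chosen so that the relevant connection terms and the central structure combine to force cancellation" is exactly the unproved step, and as stated (cancellation of the \emph{entire} bracket sum) it is false.

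The part of your argument that is correct, and matches the paper, is the treatment of the purely fiberwise terms: since $[\mathfrak{G},\mathfrak{G}] \subset Z(\mathfrak{G})$ with $Z(\mathfrak{G})$ of rank one in the nonabelian case (Proposition \ref{PROPZ}, Part 3), in a frame with $\eta_3 \in Z(\mathfrak{G})$ one has $[\eta_1,\eta_3] = [\eta_2,\eta_3] = 0$ and $[\eta_1,\eta_2]$ proportional to $\eta_3$, so $H([\eta_1,\eta_2],v,\eta_3) = 0$ by antisymmetry of $H$ (and in the abelian case all these terms vanish trivially). Your approach could be repaired rather than abandoned: since both sides of the identity are tensorial in the $\eta_i$, it suffices to verify it at a point $m$ in a frame that is $D$-parallel along a curve through $m$ with velocity $v$; because $D$ preserves $Z(\mathfrak{G})$ (Proposition \ref{PROPZ}, Part 1), such a frame can be kept adapted to the center, and then $D_v\eta_i|_m = 0$ makes both your identification $v[H(\eta_1,\eta_2,\eta_3)] = D_vH(\eta_1,\eta_2,\eta_3)$ and the vanishing of the $H(D_v\eta_i,\eta_j,\eta_k)$ terms true simultaneously at $m$. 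But this parallel-frame argument is nowhere in your proposal, and without it the plan as written fails. The paper's route is more direct: keep the $[v,\eta_i]$ terms, recognize them as $D_v\eta_i$, and absorb them into the definition of $D_vH$, using the center only to eliminate the fiberwise bracket terms.
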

 \begin{proof}Let  $\{\eta_{i}\}_{i=1,2,3}$  be a local frame for $\mathfrak{G}$ such that $\eta_{3}$ is a local section of  $Z(\mathfrak{G})$. Consider  
 \begin{align*}
 dH(v,\eta_{1},\eta_{2},\eta_{3})
 &=v[H(\eta_{1},\eta_{2},\eta_{3})] 
 -H([v,\eta_{1}],\eta_{2},\eta_{3}) + H([v,\eta_{2}],\eta_{1},\eta_{3})
  -H([v,\eta_{3}],\eta_{1},\eta_{2}) 
   \\& \ \ \  -H([\eta_{1},\eta_{2}],v,\eta_{3}) + H([\eta_{1},\eta_{3}],v,\eta_{2})  -H([\eta_{2},\eta_{3}],v,\eta_{1})
  \\&= v[H(\eta_{1},\eta_{2},\eta_{3})] 
 -H([v,\eta_{1}],\eta_{2},\eta_{3}) + H([v,\eta_{2}],\eta_{1},\eta_{3})
  -H([v,\eta_{3}],\eta_{1},\eta_{2}),
   \end{align*}
 where we used $[\eta_{1},\eta_{2}]$ and $\eta_{3}$ lie in $Z(\mathfrak{G})$, which is of rank one when $\mathcal{G}$ is nonabelian.
 
 This then equals 
 \begin{align*} 
&v[H(\eta_{1},\eta_{2},\eta_{3})] 
 -H(D_{v}\eta_{1},\eta_{2},\eta_{3}) + H(D_{v}\eta_{2},\eta_{1},\eta_{3})
  -H(D_{v}\eta_{3},\eta_{1},\eta_{2})
  \\&=D_{v}H(\eta_{1},\eta_{2},\eta_{3}),
   \end{align*}
   where we used $D_{v}\eta=[v,\eta],$ by the definition of $D$.
 \end{proof}

The next result shows that the trace of $DG$, denoted by $DG(\cdot,\cdot):= \tr_{G}DG$, is $d(\ln h)$ for some function $h$ on $M$ \cite{GindiStreets}. 

\begin{prop} \label{propGRAD} Let $\bar{g}$ be an invariant metric on a nilpotent $\mathcal{G}$--principal bundle $\pi: P \rightarrow M$ and let $\{ x_{i}\}$ be a basis for $\mathfrak{g}$. 
\begin{enumerate}
\item [1)] Given $m \in M$, $\det G(\{p,x_{i} \}, \{p,x_{j} \})= \det G(\{p',x_{i} \}, \{p',x_{j} \})$, for all $p, p' \in \pi^{-1}(m)$.
\item[2)] $h|_{m}=\det G(\{p,x_{i} \}, \{p,x_{j} \})$, for $p \in \pi^{-1}(m)$, defines a global function $h$ on M. 
\item[3)] $DG(\cdot,\cdot)=d(\ln h)$.
\end{enumerate}
\end{prop}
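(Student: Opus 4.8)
The plan is to reduce all three parts to two structural features of a nilpotent Lie algebra: that $\tr(\ad_a) = 0$ for every $a \in \mathfrak{g}$, and, as a consequence, that $\det \mathrm{Ad}_g = 1$ for every $g \in \mathcal{G}$ (since $\mathrm{Ad}_{\exp X} = \exp(\ad_X)$ is unipotent).

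For Part 1), I would use the defining relation of the adjoint bundle, $\{pg,x\} = \{p, \mathrm{Ad}_g x\}$. Writing $p' = pg$ and letting $\mathrm{Ad}_g$ also denote the matrix of $\mathrm{Ad}_g$ in the basis $\{x_i\}$, the Gram matrix $[G(\{p',x_i\},\{p',x_j\})]$ equals $\mathrm{Ad}_g^{T}\,[G(\{p,x_i\},\{p,x_j\})]\,\mathrm{Ad}_g$. Taking determinants produces a factor $(\det \mathrm{Ad}_g)^2 = 1$ by nilpotency, so the two determinants agree; this is Part 1). Part 2) is then immediate, since fiber-independence of the determinant means $m \mapsto \det G(\{p,x_i\},\{p,x_j\})$ is well defined and descends to a global function $h$ on $M$.

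For Part 3), the main computation, I would fix a local section $s$ of $P$, so that $\{s,x_i\}$ is a local frame for $\mathfrak{G}$ with Gram matrix $G_{ij} := G(\{s,x_i\},\{s,x_j\})$ and $h = \det[G_{ij}]$. By Part 3) of Proposition \ref{PROPBRA}, $D_v\{s,x_i\} = \{s,[a,x_i]\}$ with $a := (s^*\bar{A})v$, so in the frame $\{s,x_i\}$ the connection $D$ acts through the matrix $\ad_a$. Expanding
\[ (D_v G)(\{s,x_i\},\{s,x_j\}) = v[G_{ij}] - G(D_v\{s,x_i\},\{s,x_j\}) - G(\{s,x_i\},D_v\{s,x_j\}) \]
and tracing against the inverse Gram matrix $G^{ij}$: the first term gives $G^{ij} v[G_{ij}] = v[\ln\det G_{ij}] = v[\ln h]$ by Jacobi's formula, while each connection term collapses, via $G^{ij}G_{kj} = \delta^i_k$, to $\tr(\ad_a)$. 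Hence $\tr_G DG(v) = v[\ln h] - 2\tr(\ad_a)$, and since $\ad_a$ is nilpotent its trace vanishes, yielding $DG(\cdot,\cdot)(v) = v[\ln h] = d(\ln h)(v)$, as claimed.

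The computation is essentially routine once set up; the only points requiring care are the index bookkeeping in the trace and, conceptually, locating the two uses of nilpotency — through $\det \mathrm{Ad}_g = 1$ in Part 1) and through $\tr \ad_a = 0$ in Part 3). A minor subtlety worth recording is that $\tr_G DG$ is frame-independent, being a genuine $G$-trace, which is what legitimizes computing in the convenient non-orthonormal frame $\{s,x_i\}$ rather than in a $G$-orthonormal one.
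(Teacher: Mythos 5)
Your proof is correct, and a point of order: this paper does not actually contain a proof of Proposition \ref{propGRAD} --- the result is quoted from \cite{GindiStreets} --- so there is no in-text argument to compare against; your argument is the standard one for this statement. Both invocations of nilpotency are correctly located: $\det \mathrm{Ad}_g = 1$ for Part 1) (via surjectivity of $\exp$ on the tacitly connected group $\mathcal{G}$ and unipotence of $e^{\ad_X}$; note that only unimodularity $\brs{\det \mathrm{Ad}_g} = 1$ is needed, since the determinant enters squared through $\mathrm{Ad}_g^T [G_{ij}] \mathrm{Ad}_g$), and $\tr(\ad_a) = 0$ for Part 3). The Part 3) computation is also right in its details: the Leibniz expansion of $D_v G$ in the frame $\{s,x_i\}$ is legitimate even though $D$ is not metric-compatible, Jacobi's formula gives $G^{ij}v[G_{ij}] = v[\ln h]$, each connection term collapses to $\tr(\ad_a)$ via $G^{ij}G_{kj} = \delta^i_k$ by Part 3) of Proposition \ref{PROPBRA}, and your closing observation that $\tr_G DG$ is frame-independent is exactly what licenses computing in the non-orthonormal frame $\{s,x_i\}$.
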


\subsection{Invariant generalized Ricci flow}

Let $(\bar{g}(t), \bar{H}(t))$ be a time-dependent, invariant metric and three-form on a $\mathcal{G}$--principal bundle $\pi: P\rightarrow M$.  As in Section \ref{SECDGS}, consider the  isomorphisms $\delta(t): TP \rightarrow \pi^{*}\mathcal{E}$ given by $\delta(Z)= \{p,\bar{A}Z \} + \pi_{*}Z,$ where $Z \in T_{p}P$ and $\bar{A}(t) $ are the  connections associated with $\bar{g}(t)$.  Using $\delta(t)$, we obtain the following time dependent data: a fiberwise metric  $g_{_{\mathcal{E}}}(t)=G(t) \oplus g(t)$ on $\mathcal{E}= \mathfrak{G}\oplus TM$, a section $H(t)$ of $\Lambda^{3}\mathcal{E}^{*}$, a bracket $[,]$ on $\Gamma(\mathcal{E})$ and the associated operator $d$ and connection $D$, which were defined above for a fixed time. In some places,  we will extend $D$ to a connection on $\mathcal{E}= \mathfrak{G} \oplus TM$ by setting $D=D \oplus  \nabla^{L}$, where $\nabla^{L}$ is the Levi Civita connection of $g$.  To state some other notation, $\frac{\partial A}{\partial t}$ is the time dependent section of $T^{*}M \otimes \mathfrak{G}$ that is associated with $\frac{\del \bar{A}}{\del t}$. Similarly, $F(t) \in \Gamma(\Lambda^{2}T^{*}M\otimes \mathfrak{G})$ corresponds to the curvature $\bar{F}(t):=d\bar{A}+\frac{1}{2}[\bar{A},\bar{A}]$.

Now let $(\bar{g}(t),\bar{H}(t))$ be a solution to generalized Ricci flow on a nilpotent $\mathcal{G}$--principal bundle $P \rightarrow M$.  We will modify this family using diffeomorphisms of $P$.  Let $q(t)=- \tfrac{1}{2} g^{-1} D G(\cdot, \cdot) \in \Gamma(TM)$ and $X(t)=\gd^{-1} \pi^* q \in \Gamma(TP)$, where $\gd^{-1} \pi^*q$ is the horizontal lift of $q$ with respect to the connection $\bar{A}$ associated with $\bar{g}$. Then define $\hat{g}(t)=\psi^{*}\bar{g}(t)$ and $\hat{H}(t)=\psi^{*}\bar{H}(t)$, where $\psi_{t}$ is the  flow of $X(t)$ on $P$ such that $\psi_{t=0}(p)=p$, for all $p \in P$. By a standard computation, $\hat{g}(t)$ and $\hat{H}(t)$ satisfy the following equations, which we will refer to as the generalized Ricci flow equations \emph{in the canonical gauge}: 
 \begin{equation*}
\frac{\partial \hat{g}}{\partial t} =\ -2\Rc_{\hat{g}} + \frac{1}{2} \hat{H}^2 + \mathcal{L}_{(\psi^{-1})_{*}X}\hat{g} \hspace{15mm}
\frac{\partial \hat{H}}{\partial t}  =\ \Delta_{\hat{g}} \hat{H} + d i_{(\psi^{-1})_{*}X}\hat{H}.
\end{equation*}

Given such a solution, we obtain decomposed data $(G(t),g(t),H(t), \frac{\partial A}{\partial t})$, as defined above.  Before stating the relevant evolution equations we first recall a notational convention. Throughout the paper, we will adopt an abstract notation for taking traces of tensor quantities.  For instance, we set
\begin{align*}
D_{\cdot_{3}}G(\cdot_{1},\cdot_{2})D_{\cdot_{3}}G(\cdot_{1},\cdot_{2}) := \tr_{G}^{(1,2)}\tr_{g}^{3}D_{\cdot_{3}}G(\cdot_{1},\cdot_{2})D_{\cdot_{3}}G(\cdot_{1},\cdot_{2}) = G^{ij} G^{kl} g^{\ga \gb} D_{\ga} G_{ik} D_{\gb} G_{jl}.
\end{align*}
Note that we will always trace over $\cdot$'s, while we will use *'s to denote the different components of tensors.  For clarity, we will at times include the traces in the equations.

For the equations below it turns out to be simpler to express the appropriate pullbacks of the time derivatives of $\bar{H}$ rather than compute the time derivative of $H(t)$ directly.  To that end, in the equations below, defined for tensors over $M$, note that $p \in \pi^{-1}(m)$ is arbitrary in the application of $\gd_p^{-1}$.  We have (cf. \cite{GindiStreets}):

\begin{gather} \label{EQ0}
\begin{split} 
 \frac{\partial G}{ \partial t}(\eta_{1},\eta_{2})= &\ (D_{\cdot}DG)_{\cdot}(\eta_{1},\eta_{2}) -D_{\cdot_{2}}G(\eta_{1},\cdot_{1})D_{\cdot_{2}}G(\cdot_{1},\eta_{2}) \\ &\  -\frac{1}{2}G(F(\cdot_{1},\cdot_{2}), \eta_{1})G(F(\cdot_{1},\cdot_{2}), \eta_{2}) + \tr_{G}G([\cdot,\eta_{1}],[\cdot,\eta_{2}]) \\ &\ -\frac{1}{2}\tr_{G}G([\cdot_{1},\cdot_{2}],\eta_{1})G([\cdot_{1},\cdot_{2}],\eta_{2}) + \frac{1}{2}H^{2}(\eta_{1},\eta_{2})\\
  G\lp{\frac{\partial A}{\partial t}v, \eta} = &\ -G(D_{\cdot}F(v,\cdot), \eta) -D_{\cdot}G(F(v,\cdot),\eta) +\tr_{G}D_{v}G([\cdot,\eta],\cdot) + \frac{1}{2}H^{2}(v,\eta),\\
 \frac{\partial g}{ \partial t}(v_{1},v_{2}) = &\ -2\Rc_{g}(v_{1},v_{2}) +\frac{1}{2}D_{v_{1}}G(\cdot_{1},\cdot_{2})D_{v_{2}}G(\cdot_{1},\cdot_{2}) +G(F(v_{1},\cdot),F(v_{2},\cdot))+ \frac{1}{2}H^{2}(v_{1},v_{2})\\
 (\delta_{p}^{-1})^* \frac{\del \bar{H}}{\del t} =&\ dB ,\\
 B = &\ D_{\cdot}H(\cdot, *,*)  -D_{\cdot_{1}}G(\cdot_{2},\pi_{\mathfrak{G}}*) \wedge H(\cdot_{1},\cdot_{2},*)  \\& \   -\frac{1}{2}G(F(\cdot_{1},\cdot_{2}), \pi_{\mathfrak{G}}*) \wedge H(\cdot_{1},\cdot_{2},*)   +\frac{1}{2}\tr_{G}G([\cdot_{1},\cdot_{2}],\pi_{\mathfrak{G}}*) \wedge H(\cdot_{1},\cdot_{2},*).
\end{split}
\end{gather}
In the case when $\dim M=1$, we have $F \equiv 0$ and these equations become:
\begin{gather} 
\begin{split}\label{EQ}
 \frac{\partial G}{ \partial t}(\eta_{1},\eta_{2})= &\ (D_{\cdot}DG)_{\cdot}(\eta_{1},\eta_{2}) -D_{\cdot_{2}}G(\eta_{1},\cdot_{1})D_{\cdot_{2}}G(\cdot_{1},\eta_{2}) \\ &\   + \tr_{G}G([\cdot,\eta_{1}],[\cdot,\eta_{2}])  -\frac{1}{2}\tr_{G}G([\cdot_{1},\cdot_{2}],\eta_{1})G([\cdot_{1},\cdot_{2}],\eta_{2}) + \frac{1}{2}H^{2}(\eta_{1},\eta_{2}),\\
  G\lp{\frac{\partial A}{\partial t}v, \eta} = &\ \tr_{G}D_{v}G([\cdot,\eta],\cdot) + \frac{1}{2}H^{2}(v,\eta),\\
 \frac{\partial g}{ \partial t}(v_{1},v_{2}) =&\ \frac{1}{2}D_{v_{1}}G(\cdot_{1},\cdot_{2})D_{v_{2}}G(\cdot_{1},\cdot_{2}) + \frac{1}{2}H^{2}(v_{1},v_{2}),\\
 (\delta_{p}^{-1})^* \frac{\del \bar{H}}{\del t} =&\ dB,\\
 B = &\ D_{\cdot}H(\cdot, *,*)  -D_{\cdot_{1}}G(\cdot_{2},\pi_{\mathfrak{G}}*) \wedge H(\cdot_{1},\cdot_{2},*) +\frac{1}{2}\tr_{G}G([\cdot_{1},\cdot_{2}],\pi_{\mathfrak{G}}*) \wedge H(\cdot_{1},\cdot_{2},*).
\end{split}
\end{gather}

\section{Global existence and type III decay} \label{s:globex}

In this section, we will let $(\bar{g}(t),\bar{H}(t))$ be a time-dependent, invariant metric and closed three-form on a nilpotent  $\mathcal{G}$--principal bundle $P \rightarrow M$ that satisfy the flow equations (\ref{EQ0}). In Sections \ref{SECBRA12}-- \ref{SECDGH12}, we present our results regarding the evolution equations for $|[,]|^{2}$,  $|H^{\mathfrak{G}}|^{2}$, $|DG|^{2}$ and $\tr_{g}H^2$, where $H^{\mathfrak{G}}:=H(\pi_{\mathfrak{G}}*,\pi_{\mathfrak{G}}*,\pi_{\mathfrak{G}}*)$ and $\pi_{\mathfrak{G}}:\mathcal{E}\rightarrow \mathfrak{G}$ is the natural projection map. Based on these equations, we derive in Section \ref{SECGLOBAL12} our global existence and type III results for generalized Ricci flows.

\begin{rmk}
In the case when $\mathcal{G}$ is three dimensional, we express some of the equations below in terms of an orthonormal basis $\{\eta_{i} \}_{i=1,2,3}$ of $(\mathfrak{G}|_{m},G|_{m})$, where $m \in M$ and $\eta_{3} \in Z(\mathfrak{G})$. In doing so, we implicitly assume that $\mathcal{G}$ is nonabelian and hence the rank of $Z(\mathfrak{G})$ is one. These equations are still true in the abelian case on the condition that all terms with $|[,]|^{2}$ are set to zero.  
 \end{rmk}

\subsection{Evolution of \texorpdfstring{$|[,]|^{2}$}{}} \label{SECBRA12}

In this subsection, we derive the evolution equation for $\brs{[,]}^2$ first assuming the general setup of (\ref{EQ0}).  We then specialize to the case when $\mathcal{G}$ is three-dimensional and $M$ is   one-dimensional. This will lead to a sharp decay rate for $\brs{[,]}^2$ given in Section \ref{SECGLOBAL12}.

\begin{prop} \label{PROPbra1}Given a solution $(\bar{g}(t),\bar{H}(t))$  to the flow equations (\ref{EQ0}), we have  
\begin{align*}
\frac{\partial}{\partial t} \lo{[,]}^{2}&= \Delta |[,]|^{2}  
                   -|DG(\cdot,*_{1})G([*_{2},*_{3}],\cdot) -DG(*_{2}, \cdot)G([\cdot,*_{3}],*_{1}) + DG(*_
					     {3}, \cdot)G([\cdot,*_{2}],*_{1})  |^{2} 
        \\&+G(F(\cdot_{1},\cdot_{2}),\cdot_{3}) G(F(\cdot_{1},\cdot_{2}),\cdot_{4})G([\cdot_{3},\cdot_{5}], [\cdot_{4},\cdot_{5}])
        -\frac{1}{2}G(F(\cdot_{1},\cdot_{2}),[\cdot_{3},\cdot_{4}])G(F(\cdot_{1},\cdot_{2}),[\cdot_{3},\cdot_{4}])
       \\&       -2|G([*,\cdot], [*,\cdot]) -\frac{1}{2}G([\cdot_{1},\cdot_{2}],*)G([\cdot_{1},\cdot_{2}],*)|^{2} 
       -H^{2}(\cdot_{1},
                 \cdot_{2})G([\cdot_{1},\cdot],[\cdot_{2},\cdot]) 
             +\frac{1}{2}H^{2}([\cdot_{1},\cdot_{2}],[\cdot_{1},\cdot_{2}]).
\end{align*}
\end{prop}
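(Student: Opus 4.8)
The plan is to exploit two structural facts about the fiberwise bracket $L := [,]|_{\mathfrak{G}} \in \Gamma(\Lambda^2 \mathfrak{G}^* \otimes \mathfrak{G})$, which reduce the computation of $\partial_t |[,]|^2$ to one involving only the fiber metric $G$ and its derivatives. First, by Proposition \ref{PROPBRA}(2) the restriction of the bracket to the fibers of $\mathfrak{G}$ is the fixed Lie-algebra structure of $\mathfrak{g}$, so $L$ is independent of $t$. Second, applying the Jacobi identity (Proposition \ref{PROPLIE}(3)) to $v \in TM$ and $\eta,\eta' \in \mathfrak{G}$ and using $D_v \eta = [v,\eta]$ shows that $D$ is a derivation of the bracket, i.e. $D_v[\eta,\eta'] = [D_v \eta, \eta'] + [\eta, D_v\eta']$, so that $L$ is $D$-parallel, $DL = 0$. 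Consequently, writing $|[,]|^2 = P(L,L)$ where $P$ denotes the pairing on $\Lambda^2\mathfrak{G}^* \otimes \mathfrak{G}$ induced by $G$ (two inverse factors and one metric factor), both $\partial_t |[,]|^2$ and $\Delta |[,]|^2 = g^{\ga\gb} D_\ga D_\gb P(L,L)$ only differentiate the metric factors in $P$.

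Concretely, I would expand $\partial_t |[,]|^2$ by substituting the evolution of $G$ from the first line of (\ref{EQ0}) into the three metric slots of $P(L,L)$ (the two inverse factors differentiated via $\partial_t G^{-1} = -G^{-1}(\partial_t G)G^{-1}$, and the one covariant factor directly), and separately expand $\Delta |[,]|^2$ using $DL = 0$ throughout. The leading term $(D_\cdot DG)_\cdot$ of $\partial_t G$, inserted into the three slots, is precisely the second-order part of $\Delta|[,]|^2$; these cancel in $\partial_t|[,]|^2 - \Delta|[,]|^2$, producing the stated $\Delta|[,]|^2$ term and leaving four families of lower-order terms to match.

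I would then match the remaining families against the right-hand side slot by slot, keeping track of the one ``output'' contraction and the two ``input'' contractions (the latter carrying the sign from $\partial_t G^{-1}$). The curvature term $-\tfrac12 G(F(\cdot_1,\cdot_2),\eta_1)G(F(\cdot_1,\cdot_2),\eta_2)$ and the term $\tfrac12 H^2(\eta_1,\eta_2)$ of $\partial_t G$, distributed over the three slots and simplified using $L^k_{ij}\,G(F,\eta_k) = G([\cdot,\cdot],F)$, reproduce the two $F$-terms and the two $H^2$-terms with their displayed coefficients. The two bracket terms $\tr_G G([\cdot,\eta_1],[\cdot,\eta_2])$ and $-\tfrac12 \tr_G G([\cdot_1,\cdot_2],\eta_1)G([\cdot_1,\cdot_2],\eta_2)$, inserted into the three slots, give quartic expressions in $L$ which I would assemble into the completed square $-2\brs{G([*,\cdot],[*,\cdot]) - \tfrac12 G([\cdot_1,\cdot_2],*)G([\cdot_1,\cdot_2],*)}^2$.

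The main obstacle is the first-order family. Here the explicit gradient term $-D_{\cdot_2}G(\eta_1,\cdot_1)D_{\cdot_2}G(\cdot_1,\eta_2)$ of $\partial_t G$ (in all three slots) must be combined with the Bochner cross-terms $DG\cdot DG$ arising from the second covariant derivative in $\Delta|[,]|^2$ (again using $DL=0$ to keep every derivative on $G$), and the resulting collection of $(DG)^2 L^2$ terms recognized as the single negative-definite antisymmetrized square $-\brs{DG(\cdot,*_1)G([*_2,*_3],\cdot) - DG(*_2,\cdot)G([\cdot,*_3],*_1) + DG(*_3,\cdot)G([\cdot,*_2],*_1)}^2$. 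Pinning down the antisymmetrization and the three relative signs is the delicate bookkeeping step; I expect it to hinge on the symmetry of $DG$ in its two $\mathfrak{G}$-arguments together with the antisymmetry of the bracket, and to be where the precise constants are fixed.
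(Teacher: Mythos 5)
Your proposal is correct and follows essentially the same route as the paper: expand $\partial_t|[,]|^2$ by inserting the $\partial_t G$ equation from (\ref{EQ0}) into the three metric slots of $|[,]|^2$, expand $\Delta|[,]|^2$ with all derivatives falling on $G$, cancel the second-order parts, and recognize the leftover first-order terms as the antisymmetrized square $-|T|^2$ via exactly the algebraic identity the paper records (and, like the paper, asserts rather than derives). Your explicit observations that the fiber bracket is $t$-independent and $D$-parallel (via Proposition \ref{PROPBRA}(2), the Jacobi identity, and $D_v\eta=[v,\eta]$) are correct and are precisely what the paper uses implicitly, so they clarify rather than change the argument.
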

\begin{proof}
We will outline the calculation involved in proving this evolution equation. First, using the flow equations (\ref{EQ0}), we obtain
\begin{gather*}
\begin{split}
\frac{\partial}{\partial t} \lo{[,]}^{2}=& \IP{ \frac{\partial G}{\partial t}, -2G([*,\cdot],[*,\cdot]) +G([\cdot_{1},\cdot_{2}],*)G([\cdot_{1},\cdot_{2}],*)}
							\\ =& -2(D_{\cdot}DG)_{\cdot}(\cdot_{1},\cdot_{2})G([\cdot_{1},\cdot_{3}], [\cdot_{2},\cdot_{3}])
							   +  (D_{\cdot}DG)_{\cdot}([\cdot_{1},\cdot_{2}],[\cdot_{1},\cdot_{2}]) 
			\\ & + 2D_{\cdot}G(\cdot_{1},\cdot_{3})D_{\cdot}G(\cdot_{2},\cdot_{3})G([\cdot_{1},\cdot_{4}],[\cdot_{2},\cdot_{4}])
					- D_{\cdot}G([\cdot_{1},\cdot_{2}],\cdot_{3})D_{\cdot}G([\cdot_{1},\cdot_{2}],\cdot_{3})    
		\\& 		-2|G([*,\cdot], [*,\cdot]) -\frac{1}{2}G([\cdot_{1},\cdot_{2}],*)G([\cdot_{1},\cdot_{2}],*)|^{2}	
					 \\&+G(F(\cdot_{1},\cdot_{2}),\cdot_{3}) G(F(\cdot_{1},\cdot_{2}),\cdot_{4})G([\cdot_{3},\cdot_{5}], [\cdot_{4},\cdot_{5}])
         -\frac{1}{2}G(F(\cdot_{1},\cdot_{2}),[\cdot_{3},\cdot_{4}])G(F(\cdot_{1},\cdot_{2}),[\cdot_{3},\cdot_{4}])
      \\&  -H^{2}(\cdot_{1},
                 \cdot_{2})G([\cdot_{1},\cdot],[\cdot_{2},\cdot]) 
             +\frac{1}{2}H^{2}([\cdot_{1},\cdot_{2}],[\cdot_{1},\cdot_{2}]).
\end{split}
\end{gather*}
Futhermore, a direct computation yields
\begin{gather*}
\begin{split}
\Delta |[,]|^{2} =&\ (D_{\cdot}DG)_{\cdot}([\cdot_{1},\cdot_{2}],[\cdot_{1},\cdot_{2}]) 
                          -4D_{\cdot}G([\cdot_{1},\cdot_{3}],[\cdot_{2},\cdot_{3}])D_{\cdot}G(\cdot_{1},\cdot_{2})    
                   \\&  -2(D_{\cdot}DG)_{\cdot}(\cdot_{1},\cdot_{2})G([\cdot_{1},\cdot_{3}],[\cdot_{2},\cdot_{3}])
   		+4G([\cdot_{1},\cdot_{3}],[\cdot_{2},\cdot_{3}])D_{\cdot}G(\cdot_{1},\cdot_{4})D_{\cdot}G(\cdot_{2},\cdot_{4})
  		\\& +2G([\cdot_{1},\cdot_{2}],[\cdot_{3},\cdot_{4}])D_{\cdot}G(\cdot_{1},\cdot_{3})D_{\cdot}G(\cdot_{2},\cdot_{4}).
\end{split}
\end{gather*}
Next, we observe the algebraic identity
\begin{gather*}
     \begin{split}
  |DG(\cdot,*_{1})& G([*_{2},*_{3}],\cdot) -DG(*_{2}, \cdot)G([\cdot,*_{3}],*_{1}) + DG(*_
					     {3}, \cdot)G([\cdot,*_{2}],*_{1})|^{2}\\
		=&\  - 4D_{\cdot}G([\cdot_{1},\cdot_{3}],[\cdot_{2},\cdot_{3}])D_{\cdot}G(\cdot_{1},\cdot_{2}) 
  					+2G([\cdot_{1},\cdot_{3}],[\cdot_{2},\cdot_{3}])D_{\cdot}G(\cdot_{1},\cdot_{4})D_{\cdot}G(\cdot_{2},\cdot_{4})\\ 
		&\ +2G([\cdot_{1},\cdot_{2}],[\cdot_{3},\cdot_{4}])D_{\cdot}G(\cdot_{1},\cdot_{3})D_{\cdot}G(\cdot_{2},\cdot_{4})
					+ D_{\cdot}G([\cdot_{1},\cdot_{2}],\cdot_{3})D_{\cdot}G([\cdot_{1},\cdot_{2}],\cdot_{3}).       
   \end{split}
 \end{gather*}
Combining the above three equations yields the result.
\end{proof}

\begin{prop} \label{PROPbra2} Let $(\bar{g}(t),\bar{H}(t))$ be a solution to the flow equations  (\ref{EQ0}) and suppose $\mathcal{G}$ is a three-dimensional, nilpotent Lie group and $M$ is one-dimensional.  The following holds true:
 \begin{align*}
\frac{\partial}{\partial t} \lo{[,]}^{2}=&\ \Delta |[,]|^{2} -\frac{3}{2} \lo{[,]}^{4} -S_{A},
\end{align*}
where 
\begin{align*}
S_{A} = |T|^{2}	+|H(\pi_{TM}*, \cdot_{1},\cdot_{2})[\cdot_{1},\cdot_{2}]|^{2}
			+ \frac{1}{6}\lo{H^{\mathfrak{G}}}^{2} |[,]|^{2}
\end{align*}
and 
\begin{align*}
T=DG(\cdot,*_{1})G([*_{2},*_{3}],\cdot) -DG(*_{2}, \cdot)G([\cdot,*_{3}],*_{1}) + DG(*_
					     {3}, \cdot)G([\cdot,*_{2}],*_{1}).		
\end{align*}
\begin{proof}The result follows by simplifying certain terms in Proposition \ref{PROPbra1} given the assumptions on $\mathcal{G}$ and $M$. First, the $F$--terms in that proposition vanish because $M$ is one dimensional. Next, we claim that 
\begin{gather} \label{EQAB}
\begin{split}
-2|G([*,\cdot], [*,\cdot]) -\frac{1}{2}G([\cdot_{1},\cdot_{2}],*)G([\cdot_{1},\cdot_{2}],*)|^{2}=  -\frac{3}{2}\lo{[,]}^{4}.
\end{split}
\end{gather}
To prove this, let $\{\eta_{i}\}_{i=1,2,3}$ be an orthonormal basis for $(\mathfrak{G}|_{m},G|_{m})$ such that $\eta_{3} \in Z(\mathfrak{G})$ and where $m \in M$.  If we set $Q=-2G([*,\cdot], [*,\cdot]) +G([\cdot_{1},\cdot_{2}],*)G([\cdot_{1},\cdot_{2}],*)$ then \[Q(\eta_{1},\eta_{1})=Q(\eta_{2},\eta_{2})=-Q(\eta_{3},\eta_{3})=-|[,]|^{2},\] while all other components are zero. $|Q|^{2}$ is then equal to $3|[,]|^{4}$, which proves (\ref{EQAB}).

Lastly, using the properties of $\{\eta_{i}\}_{i=1,2,3}$ and letting $v \in T_{m}M$ with $g(v,v)=1$, we have 
\begin{align*}
    -H^2(\cdot_{1},
                 \cdot_{2})G([\cdot_{1},\cdot],[\cdot_{2},\cdot]) &
             +\frac{1}{2}H^2([\cdot_{1},\cdot_{2}],[\cdot_{1},\cdot_{2}])\\
             =&\  -4H(v,\eta_{1},\eta_{2})^{2}|[\eta_{1},\eta_{2}]|^{2} -2H(\eta_{1},\eta_{2},\eta_{3})^{2}|[\eta_{1},\eta_{2}]|^{2}\\
             =&\ -|H(\pi_{TM}*, \cdot_{1},\cdot_{2})[\cdot_{1},\cdot_{2}]|^{2}
			- \frac{1}{6}\lo{H^{\mathfrak{G}}}^{2} |[,]|^{2}.
\end{align*}
\end{proof}
\end{prop}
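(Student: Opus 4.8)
The plan is to start from the general evolution equation of Proposition \ref{PROPbra1} and simplify each of its terms under the two structural hypotheses $\dim M = 1$ and $\mathcal{G}$ three-dimensional nilpotent. Three reductions are needed. First, since $\dim M = 1$ forces $F \equiv 0$ (exactly as in the passage from (\ref{EQ0}) to (\ref{EQ})), every $F$-term in Proposition \ref{PROPbra1} drops out at once, leaving the heat term $\Delta \lo{[,]}^2$, the term $-\lo{T}^2$ (which is already the first piece of $S_A$, since $T$ is defined to be precisely that bracket expression), the quartic bracket term, and the two $H^2$-terms. So it remains to collapse the quartic term to $-\tfrac{3}{2}\lo{[,]}^4$ and to rewrite the two $H^2$-terms as the remaining pieces of $S_A$.

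For the quartic term, the key tool is Proposition \ref{PROPSBASIS}: there is a local orthonormal frame $\{\eta_i\}$ of $(\mathfrak{G},G)$ with $\eta_3$ spanning the one-dimensional center $Z(\mathfrak{G}) = [\mathfrak{G},\mathfrak{G}]$ (Proposition \ref{PROPZ}), so that the only nontrivial bracket is $[\eta_1,\eta_2]$, which lies in the line spanned by $\eta_3$. I would set $Q := -2G([*,\cdot],[*,\cdot]) + G([\cdot_1,\cdot_2],*)G([\cdot_1,\cdot_2],*)$ and evaluate its components in this frame: only the diagonal entries in the $(1,1)$, $(2,2)$, $(3,3)$ slots survive, each of magnitude $\lo{[,]}^2$, because the first summand is diagonal with entries proportional to $\sum_k \lo{[\eta_i,\eta_k]}^2$ and the second is supported only on the central direction. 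Reading off $\lo{Q}^2 = 3\lo{[,]}^4$ and using that the quartic term equals $-\tfrac{1}{2}\lo{Q}^2$ then yields exactly $-\tfrac{3}{2}\lo{[,]}^4$, which is the claimed identity (\ref{EQAB}).

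For the $H^2$-terms I would compute in the same frame, augmented by a unit vector $v \in T_m M$, so that $\{v,\eta_1,\eta_2,\eta_3\}$ is an orthonormal frame for the four-dimensional $\mathcal{E} = \mathfrak{G} \oplus TM$. Expanding $-H^2(\cdot_1,\cdot_2)G([\cdot_1,\cdot],[\cdot_2,\cdot])$ and $\tfrac{1}{2}H^2([\cdot_1,\cdot_2],[\cdot_1,\cdot_2])$, the centrality of $\eta_3$ and the fact that $[\eta_1,\eta_2]$ spans the center kill most contributions; the surviving ones are naturally sorted by whether the distinguished $H$-argument is the horizontal direction $v$ or purely vertical. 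After the cancellation between the two $H^2$-terms I expect only the mixed component $H(v,\eta_1,\eta_2)$ and the fully vertical component $H(\eta_1,\eta_2,\eta_3)$ to remain, producing a multiple of $H(v,\eta_1,\eta_2)^2\lo{[\eta_1,\eta_2]}^2$ and of $H(\eta_1,\eta_2,\eta_3)^2\lo{[\eta_1,\eta_2]}^2$. The last step is to recognize these as the invariant quantities $\lo{H(\pi_{TM}*,\cdot_1,\cdot_2)[\cdot_1,\cdot_2]}^2$ and $\tfrac{1}{6}\lo{H^{\mathfrak{G}}}^2\lo{[,]}^2$, by expanding each invariant expression in the same frame and matching constants.

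The main obstacle is this third step: tracking the various components of $H$ on the four-dimensional $\mathcal{E}$ and verifying that the cancellation between the two $H^2$-terms eliminates every component except the two appearing in $S_A$. This demands care with the normalization conventions for the norms of $[,]$, of $H^2$, and of $H^{\mathfrak{G}}$ — the coefficient $\tfrac{1}{6}$ in particular reflects the full-contraction convention for $\lo{H^{\mathfrak{G}}}^2$ against the single central direction carrying $[\eta_1,\eta_2]$ — together with the precise identification of $\lo{H(\pi_{TM}*,\cdot_1,\cdot_2)[\cdot_1,\cdot_2]}^2$ with a fixed multiple of $H(v,\eta_1,\eta_2)^2\lo{[\eta_1,\eta_2]}^2$ via the relation $[\eta_2,\eta_1] = -[\eta_1,\eta_2]$.
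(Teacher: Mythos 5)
Your proposal is correct and follows essentially the same route as the paper's proof: you discard the $F$-terms using $\dim M = 1$, evaluate the same tensor $Q=-2G([*,\cdot],[*,\cdot])+G([\cdot_{1},\cdot_{2}],*)G([\cdot_{1},\cdot_{2}],*)$ in the adapted frame with $\eta_{3}\in Z(\mathfrak{G})$ to obtain $|Q|^{2}=3|[,]|^{4}$ and hence $-\tfrac{1}{2}|Q|^{2}=-\tfrac{3}{2}|[,]|^{4}$, and reduce the two $H^{2}$-terms in that same frame to the $H(v,\eta_{1},\eta_{2})$ and $H(\eta_{1},\eta_{2},\eta_{3})$ contributions, exactly as in the paper. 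The cancellation you anticipate of the mixed components $H(v,\eta_{1},\eta_{3})$ and $H(v,\eta_{2},\eta_{3})$ between the two terms does occur, and the coefficients you leave to be matched come out as $-4H(v,\eta_{1},\eta_{2})^{2}|[\eta_{1},\eta_{2}]|^{2}-2H(\eta_{1},\eta_{2},\eta_{3})^{2}|[\eta_{1},\eta_{2}]|^{2}$, agreeing with $-|H(\pi_{TM}*,\cdot_{1},\cdot_{2})[\cdot_{1},\cdot_{2}]|^{2}-\tfrac{1}{6}|H^{\mathfrak{G}}|^{2}|[,]|^{2}$ under the full-contraction conventions, just as the paper computes.
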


\subsection{Evolution of \texorpdfstring{$|H^{\mathfrak{G}}|^{2}$}{}}

 \begin{prop} \label{PROPEVH} 
  Let $(\bar{g}(t),\bar{H}(t))$ be a solution to the flow equations  (\ref{EQ0}) and suppose $\mathcal{G}$ is a three-dimensional, nilpotent Lie group and $M$ is one-dimensional.  The following holds true:
  \begin{align*}
 \frac{\partial}{\partial t} \lo{H^{\mathfrak{G}}}^{2}= \Delta  |H^{\mathfrak{G}}|^{2}- \frac{1}{2} \lo{H^{\mathfrak{G}}}^{4} - |H^{\mathfrak{G}}|^{2}\left(\frac{1}{2} \lo{[,]}^{2}+ |DG(\cdot,\cdot)|^{2}+\tr_{g}H^2 \right).
 \end{align*}
 \begin{proof} First,  fix a basis $\{\eta_{i}\}_{i=1,2,3}$ for $\mathfrak{G}|_{m}$  such that $\eta_{3}\in Z(\mathfrak{G})$ and where $m \in M$. Using the flow equations (\ref{EQ}), it follows that 
 \begin{align*}
 \frac{\partial H}{\partial t}  (\eta_{1},\eta_{2},\eta_{3})
    &=dB(\eta_{1},\eta_{2},\eta_{3})
    \\&=-B([\eta_{1},\eta_{2}],\eta_{3}) +\mathrm{cyclic}(1,2,3)
    =-B([\eta_{1},\eta_{2}],\eta_{3})
    =0,    
 \end{align*}
 where we used Proposition \ref{PROPDB1} and that both $[\eta_{1},\eta_{2}]$ and $\eta_{3}$ lie in $Z(\mathfrak{G}|_{m}),$ which is one-dimensional when $\mathcal{G}$ is nonabelian.   It follows that
 \begin{align*}
\frac{\partial}{\partial t} |H^{\mathfrak{G}}|^{2} 
   					=&\ 2\frac{\partial H}{\partial t}(\eta_{i},\eta_{j},\eta_{k})H(\eta_{i},\eta_{j},\eta_{k})
					 -3H(\eta_{i},\eta_{k},\eta_{l})H(\eta_{j},\eta_{k},\eta_{l})\frac{\partial G}{\partial t}(\eta_{i},\eta_{j})
					\\ =&\ -6H(\eta_{1},\eta_{2},\eta_{3})^{2}\frac{\partial G}{\partial t}(\eta_{i},\eta_{i})\\
					 =&\ -|H^{\mathfrak{G}}|^{2} \left(D_{v}D_{v}G(\cdot,\cdot)-|DG|^{2} +\frac{1}{2}|[,]|^{2} +\frac{1}{2}H^2(\eta_{i},\eta_{i}) \right)\\
					 =&\ -|H^{\mathfrak{G}}|^{2} \left(D_{v}D_{v}G(\cdot,\cdot)-|DG|^{2} +\frac{1}{2}|[,]|^{2} + H^2(v,v)+ \frac{1}{2}|H^{\mathfrak{G}}|^{2} \right).
					 \end{align*}
To compute $\gD \brs{H^{\mathfrak G}}^2$, we first observe, using Proposition \ref{PROPdH} and $dH=0$,
 \begin{align}
  D|H^{\mathfrak{G}}|^{2}=&\ 
      2DH(\eta_{i},\eta_{j},\eta_{k})H(\eta_{i},\eta_{j},\eta_{k})
        -3H(\eta_{i},\eta_{k},\eta_{l})H(\eta_{j},\eta_{k},\eta_{l})DG(\eta_{i},\eta_{j}) \nonumber \\
        =&\ -6H(\eta_{1},\eta_{2},\eta_{3})^{2}DG(\cdot,\cdot) \nonumber  \\
        =&\ -|H^{\mathfrak{G}}|^{2}DG(\cdot,\cdot).\label{EQDH12}
 \end{align}
 Taking another derivative yields
 \begin{align*}
DD|H^{\mathfrak{G}}|^{2}
          =&\ D(-|H^{\mathfrak{G}}|^{2}DG(\cdot,\cdot))\\
          =&\ -D(|H^{\mathfrak{G}}|^{2})DG(\cdot,\cdot)
              -|H^{\mathfrak{G}}|^{2}D(DG(\cdot,\cdot) )\\
          =&\ |H^{\mathfrak{G}}|^{2}DG(\cdot_{1},\cdot_{1})DG(\cdot_{2},\cdot_{2})
             -|H^{\mathfrak{G}}|^{2}DDG(\cdot,\cdot)
             +|H^{\mathfrak{G}}|^{2}DG(\cdot_{1},\cdot_{2})DG(\cdot_{1},\cdot_{2}).
\end{align*}
Taking the trace and combining it with the above time derivative computation yields the result.
 \end{proof}
 \end{prop}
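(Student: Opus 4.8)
The plan is to reduce everything to the single scalar $H(\eta_1,\eta_2,\eta_3)$, where $\{\eta_i\}_{i=1,2,3}$ is an orthonormal frame for $(\mathfrak{G},G)$ with $\eta_3\in Z(\mathfrak{G})$, since in the three-dimensional nilpotent case $H^{\mathfrak{G}}$ has only this one independent component (so $|H^{\mathfrak{G}}|^2 = 6\,H(\eta_1,\eta_2,\eta_3)^2$). The entire proof then rests on two vanishing facts that must be established first. The first is that $\frac{\partial H}{\partial t}(\eta_1,\eta_2,\eta_3)=0$: expanding $(\delta_p^{-1})^*\frac{\partial \bar H}{\partial t}=dB$ via Proposition \ref{PROPDB1}, the purely vertical component $dB(\eta_1,\eta_2,\eta_3)$ collapses to the single term $-B([\eta_1,\eta_2],\eta_3)$, which vanishes because both $[\eta_1,\eta_2]$ and $\eta_3$ lie in the one-dimensional center $Z(\mathfrak{G})$ (Proposition \ref{PROPZ}). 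The second is that $D_v H(\eta_1,\eta_2,\eta_3)=0$: by Proposition \ref{PROPdH} this equals $dH(v,\eta_1,\eta_2,\eta_3)$, which is zero since $\bar H$ is closed.

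Granting these, I would first compute the time derivative. Because $\frac{\partial H}{\partial t}$ has no vertical component, the only contribution to $\frac{\partial}{\partial t}|H^{\mathfrak{G}}|^2$ comes from differentiating the three inverse-metric factors used to form the norm, which produces $-6\,H(\eta_1,\eta_2,\eta_3)^2\,\operatorname{tr}_G \frac{\partial G}{\partial t} = -|H^{\mathfrak{G}}|^2\,\frac{\partial G}{\partial t}(\eta_i,\eta_i)$. Substituting the evolution equation (\ref{EQ}) for $\frac{\partial G}{\partial t}$ and tracing, the two bracket terms combine (via the Heisenberg structure, $|[,]|^2 - \tfrac12|[,]|^2 = \tfrac12|[,]|^2$) to give $\tfrac12|[,]|^2$, and the $H^2$ term decomposes as $H^2(v,v)+\tfrac12|H^{\mathfrak{G}}|^2$; this yields the time derivative as $-|H^{\mathfrak{G}}|^2$ times $D_vD_vG(\cdot,\cdot)-|DG|^2+\tfrac12|[,]|^2+H^2(v,v)+\tfrac12|H^{\mathfrak{G}}|^2$.

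Next I would compute the Laplacian. The key payoff of the second vanishing fact is the clean identity $D|H^{\mathfrak{G}}|^2 = -|H^{\mathfrak{G}}|^2\,DG(\cdot,\cdot)$, obtained because the $DH$ term drops out and only the metric-derivative term survives. Differentiating once more and feeding this identity back into itself expresses $DD|H^{\mathfrak{G}}|^2$, and after tracing $\Delta|H^{\mathfrak{G}}|^2$, purely in terms of $(DG(\cdot,\cdot))^2$, $D_vD_vG(\cdot,\cdot)$, and $|DG|^2$, each multiplied by $|H^{\mathfrak{G}}|^2$. Subtracting from the time derivative, the $D_vD_vG(\cdot,\cdot)$ and $|DG|^2$ terms cancel, leaving exactly $-\tfrac12|H^{\mathfrak{G}}|^4 - |H^{\mathfrak{G}}|^2(\tfrac12|[,]|^2 + |DG(\cdot,\cdot)|^2 + \operatorname{tr}_g H^2)$, using that $H^2(v,v)=\operatorname{tr}_g H^2$ since $M$ is one-dimensional. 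The main obstacle I anticipate is bookkeeping the non-metricity of $D$ with respect to $G$: taking $D$ of the $G$-trace $DG(\cdot,\cdot)$ generates the extra $|DG|^2$ correction term precisely because $D$ does not preserve $G$, and getting its sign right (so that it cancels against the $|DG|^2$ appearing in $\operatorname{tr}_G\frac{\partial G}{\partial t}$) is the delicate point. Once the two vanishing identities are in hand, the rest is a controlled algebraic cancellation.
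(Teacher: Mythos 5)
Your proposal is correct and follows essentially the same route as the paper: the same two vanishing identities ($\frac{\partial H}{\partial t}(\eta_1,\eta_2,\eta_3)=0$ via Proposition \ref{PROPDB1} and the one-dimensional center, and $D_vH(\eta_1,\eta_2,\eta_3)=0$ via Proposition \ref{PROPdH} with $dH=0$), the same reduction of the time derivative to $-|H^{\mathfrak{G}}|^2\,\tr_G\frac{\partial G}{\partial t}$, and the same gradient identity $D|H^{\mathfrak{G}}|^2=-|H^{\mathfrak{G}}|^2 DG(\cdot,\cdot)$ driving the Laplacian computation and the final cancellation of the $D_vD_vG(\cdot,\cdot)$ and $|DG|^2$ terms. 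No gaps; even the delicate sign bookkeeping you flag for the non-metricity of $D$ works out exactly as in the paper's proof.
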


\subsection{Evolution of \texorpdfstring{$|DG|^{2}$}{}}

In this subsection, we assume $\mathcal{G}$ is a three-dimensional, nilpotent Lie group and $M$ is one-dimensional.  
 We will also let $v \in T_{m}M$ denote a $t$--dependent vector satisfying $g(v,v)=1$ and will let $\{\eta_{i}\}_{i=1,2,3}$ be an orthonormal basis for $(\mathfrak{G}|_{m},G|_{m})$ such that $\eta_{3} \in Z(\mathfrak{G})$. 
 
\begin{prop} \label{PROPDG1} Let $(\bar{g}(t),\bar{H}(t))$ be a solution to the flow equations  (\ref{EQ0}) and suppose $\mathcal{G}$ is a three-dimensional, nilpotent Lie group and $M$ is one-dimensional.  The following holds true:
 \begin{align*}
  \frac{\partial}{\partial t} \lo{DG}^{2}
             =&\ \Delta |DG|^{2} - \frac{1}{2}\lo{DG}^{4}
            -2|(D_{\cdot}DG)_{\cdot}(*_{1},*_{2})-D_{\cdot_{1}}G(*_{1},\cdot_{2})D_{\cdot_{1}}G(*_{2},\cdot_{2})|^{2}
            \\& -|[,]|^{2}(D_{v}G(\cdot,\cdot)-2D_{v}G(\eta_{3},\eta_{3}))^{2} 
            \\& -2|[,]|^{2}(D_{v}G(\cdot,\eta_{3})D_{v}G(\cdot,\eta_{3}) - |DG|^{2}_{Z(\mathfrak{G})} )
             -4| DG([\cdot,*],\cdot)|^{2}
             \\&-2H^{2}(v,\eta_{i})D_{v}G([\cdot,\eta_{i}],\cdot)
             -\frac{1}{2}\lo{DG}^{2}\tr_{g}H^{2}
             -2H(v,\cdot_{1},\cdot_{2})H(v,\cdot_{3},\cdot_{4})D_{v}G(\cdot_{1},\cdot_{3})D_{v}G(\cdot_{2},\cdot_{4})
         \\&    -2H(v,\cdot_{1},\cdot_{2})H(v,\cdot_{3},\cdot_{2})D_{v}G(\cdot_{1},\cdot_{4})D_{v}G(\cdot_{3},\cdot_{4})
                -\frac{1}{3}\lo{H^{ \mathfrak{G}}}^{2}|DG(\cdot,\cdot)|^{2}
    \\&+4D_{v}H(v,\cdot_{1},\cdot_{2})H(v, \cdot_{3},\cdot_{2})D_{v}G(\cdot_{1},\cdot_{3}).    
     \end{align*}
   \begin{proof}
 First note that for $w \in TM$ and $\eta \in \mathfrak{G}$, 
 \begin{align*}
  \frac{d}{dt} \lp{D_{w}\eta}= -\lb{\frac{\partial A}{\partial t}w,\eta}.
\end{align*}
Using this, a basic computation yields
 \begin{align*}
    \frac{\partial}{\partial t} |DG|^{2}
          =&\ 2D_{v}\frac{\partial G}{\partial t}(\cdot_{1},\cdot_{2})D_{v}G(\cdot_{1},\cdot_{2})
             +4G\lp{\lb{\frac{\partial A}{\partial t}v,\cdot_{1}},\cdot_{2}}D_{v}G(\cdot_{1},\cdot_{2})
             -|DG|^{2}\frac{\partial g}{\partial t}(\cdot,\cdot)
            \\&\ -2D_{v}G(\cdot_{1},\cdot_{2})D_{v}G(\cdot_{3},\cdot_{2})\frac{\partial G}{\partial t}(\cdot_{1},\cdot_{3}).
  \end{align*}
 Using the flow equations (\ref{EQ}), we compute each term above.  First
  \begin{gather*}
 \begin{split}
    D_{v}\frac{\partial G}{\partial t}(\cdot_{1},\cdot_{2})D_{v}G(\cdot_{1},\cdot_{2})
      =&\ D_{v}D_{v}D_{v}G(\cdot_{1},\cdot_{2})D_{v}G(\cdot_{1},\cdot_{2})
         -2D_{v}D_{v}G(\cdot_{1},\cdot_{2})D_{v}G(\cdot_{3},\cdot_{2})D_{v}G(\cdot_{1},\cdot_{3})
       \\&\ +D_{v}G(\cdot_{1},\cdot_{2})D_{v}G(\cdot_{2},\cdot_{3})D_{v}G(\cdot_{3},\cdot_{4})D_{v}G(\cdot_{4},\cdot_{1})
       +\frac{1}{2}\lo{[,]}^{2}|DG|^{2}
       \\&\ +D_{v}|[,]|^{2} \lp{ \frac{1}{2}D_{v}G(\cdot,\cdot)- D_{v}G(\eta_{3},\eta_{3})  }    
       \\&\ + |[,]|^{2}(-2D_{v}G(\cdot,\eta_{3}) D_{v}G(\cdot,\eta_{3}) + D_{v}G(\eta_{3},\eta_{3})D_{v}G(\eta_{3},\eta_{3})  )  
       \\&\ +\frac{1}{2}D_{v}H^{2}(\cdot_{1},\cdot_{2})D_{v}G(\cdot_{1},\cdot_{2}).
 \end{split}
 \end{gather*}
 Next we have
  \begin{gather*}
 \begin{split}
     G\lp{\lb{\frac{\partial A}{\partial t}v,\cdot_{1}},\cdot_{2}}D_{v}G(\cdot_{1},\cdot_{2})
           =-D_{v}G([\cdot_{1},\cdot_{2}],\cdot_{1})D_{v}G([\cdot_{3},\cdot_{2}],\cdot_{3})
           -\frac{1}{2}H^{2}(v, \eta_{i})D_{v}G([\cdot,\eta_{i}],\cdot).
  \end{split}
 \end{gather*}
 Next
 \begin{gather*}
 \begin{split}
   |DG|^{2}\frac{\partial g}{\partial t}(\cdot,\cdot)= \frac{1}{2}\lo{DG}^{4}+ \frac{1}{2}\lo{DG}^{2}H^{2}(v,v).
   \end{split}
 \end{gather*}
 Finally,
  \begin{gather*}
 \begin{split}
   D_{v}G(\cdot_{1},\cdot_{2})D_{v}G(\cdot_{3},\cdot_{2})\frac{\partial G}{ \partial t}(\cdot_{1},\cdot_{3})
   =&\  D_{v}G(\cdot_{1},\cdot_{2})D_{v}G(\cdot_{3},\cdot_{2})D_{v}D_{v}G(\cdot_{1},\cdot_{3}) 
    \\&\ -D_{v}G(\cdot_{1},\cdot_{2})D_{v}G(\cdot_{2},\cdot_{3})D_{v}G(\cdot_{3},\cdot_{4})D_{v}G(\cdot_{4},\cdot_{1})
    \\&\ +|[,]|^{2}\lp{\frac{1}{2}\lo{DG}^{2}   -D_{v}G(\eta_{3},\cdot)D_{v}G(\eta_{3},\cdot)   }
    \\&\ +\frac{1}{2}D_{v}G(\cdot_{1},\cdot_{2})D_{v}G(\cdot_{3},\cdot_{2})H^{2}(\cdot_{1},\cdot_{3}).
\end{split}
 \end{gather*}
We next compute $\gD \brs{DG}^2.$  We first observe
  \begin{gather}\label{EQDDG}
 \begin{split}
  D_{v}|DG|^{2}= 2(D_{v}D_{v}G(\cdot_{1},\cdot_{2})- D_{v}G(\cdot_{1},\cdot_{3})D_{v}G(\cdot_{2},\cdot_{3})   )D_{v}G(\cdot_{1},\cdot_{2}). 
 \end{split}
 \end{gather}
 Using this we obtain
 \begin{gather*}
 \begin{split}
    \Delta |DG|^{2}=&\
        2D_{v}D_{v}D_{v}G(\cdot_{1},\cdot_{2})D_{v}G(\cdot_{1},\cdot_{2})
        +2D_{v}D_{v}G(\cdot_{1},\cdot_{2})D_{v}D_{v}G(\cdot_{1},\cdot_{2})
        \\&\ -10D_{v}D_{v}G(\cdot_{1},\cdot_{2})D_{v}G(\cdot_{1},\cdot_{3})D_{v}G(\cdot_{2},\cdot_{3})
        \\&\ +6D_{v}G(\cdot_{1},\cdot_{2})D_{v}G(\cdot_{2},\cdot_{3})D_{v}G(\cdot_{3},\cdot_{4})D_{v}G(\cdot_{4},\cdot_{1}).
   \end{split}
 \end{gather*}
 Combining the above computations thus yields
 \begin{align*}
 \frac{\partial}{\partial t} \lo{DG}^{2}
             =&\ \Delta |DG|^{2} - \frac{1}{2} \lo{DG}^{4}
               -2D_{v}D_{v}G(\cdot_{1},\cdot_{2})D_{v}D_{v}G(\cdot_{1},\cdot_{2})
               +4D_{v}D_{v}G(\cdot_{1},\cdot_{2})D_{v}G(\cdot_{1},\cdot_{3})D_{v}G(\cdot_{2},\cdot_{3})
               \\&\ -2D_{v}G(\cdot_{1},\cdot_{2})D_{v}G(\cdot_{2},\cdot_{3})D_{v}G(\cdot_{3},\cdot_{4})D_{v}G(\cdot_{4},\cdot_{1})
               +D_{v}|[,]|^{2}(D_{v}G(\cdot,\cdot) -2D_{v}G(\eta_{3},\eta_{3}) )
               \\&\ +2|[,]|^{2}(-D_{v}G(\cdot,\eta_{3})D_{v}G(\cdot,\eta_{3}) +D_{v}G(\eta_{3},\eta_{3})D_{v}G(\eta_{3},\eta_{3})   )
               -4D_{v}G([\cdot_{1},\cdot_{2}],\cdot_{1})D_{v}G([\cdot_{3},\cdot_{2}],\cdot_{3})
               \\&\ +D_{v}H^{2}(\cdot_{1},\cdot_{2})D_{v}G(\cdot_{1},\cdot_{2})
               -\frac{1}{2} \lo{DG}^{2}H^{2}(v,v)
               -D_{v}G(\cdot_{1},\cdot_{2})D_{v}G(\cdot_{3},\cdot_{2})H^{2}(\cdot_{1},\cdot_{3})
               \\&\ -2H^{2}(v,\eta_{i})D_{v}G([\cdot,\eta_{i}],\cdot).   
\end{align*} 
It remains to perform some simplifications on these terms.  First, we have
\begin{gather*}
 \begin{split}
  -2|D_{v}D_{v}G- D_{v}G(*,\cdot)D_{v}G(*,\cdot)|^{2}=&\ 
    					-2D_{v}D_{v}G(\cdot_{1},\cdot_{2})D_{v}D_{v}G(\cdot_{1},\cdot_{2})
              \\&\ +4D_{v}D_{v}G(\cdot_{1},\cdot_{2})D_{v}G(\cdot_{1},\cdot_{3})D_{v}G(\cdot_{2},\cdot_{3})
               \\&\ -2D_{v}G(\cdot_{1},\cdot_{2})D_{v}G(\cdot_{2},\cdot_{3})D_{v}G(\cdot_{3},\cdot_{4})D_{v}G(\cdot_{4},\cdot_{1}).
 \end{split}
\end{gather*}
Furthermore, one computes that
\begin{gather} \label{EQDBRA}
 \begin{split}
  D|[,]|^{2}=-|[,]|^{2}(DG(\cdot,\cdot) -2DG(\eta_{3},\eta_{3}) )
  \end{split}
\end{gather}
and using this we obtain
 \begin{gather*}
 \begin{split}
     D_{v}|[,]|^{2}(D_{v}G(\cdot,\cdot) -2D_{v}G(\eta_{3},\eta_{3}) )= -|[,]|^{2}(D_{v}G(\cdot,\cdot) -2D_{v}G(\eta_{3},\eta_{3}) )^{2}.
 \end{split}
\end{gather*}
We also compute
\begin{gather*}
 \begin{split}
D_{v}H^{2}(\cdot_{1},\cdot_{2})D_{v}G(\cdot_{1},\cdot_{2})
           				=&\ 4D_{v}H(v,\cdot_{1},\cdot_{2})H(v,\cdot_{3},\cdot_{2})D_{v}G(\cdot_{1},\cdot_{3})
				  		\\&\ -2H(v,\cdot_{1},\cdot_{2})H(v,\cdot_{3},\cdot_{4})D_{v}G(\cdot_{1},\cdot_{3})D_{v}G(\cdot_{2},\cdot_{4})
				 \\&\ +2D_{v}H(\eta_{i},\eta_{k},\eta_{l})H(\eta_{j},\eta_{k},\eta_{l})D_{v}G(\eta_{i},\eta_{j})            
				    \\&\ -2H(\cdot_{1},\cdot_{2},\eta_{i})H(\cdot_{3},\cdot_{4},\eta_{i})D_{v}G(\cdot_{2},\cdot_{4})D_{v}G(\cdot_{1},\cdot_{3})\\
				    	=&\ 4D_{v}H(v,\cdot_{1},\cdot_{2})H(v,\cdot_{3},\cdot_{2})D_{v}G(\cdot_{1},\cdot_{3})
				  		\\&\ -2H(v,\cdot_{1},\cdot_{2})H(v,\cdot_{3},\cdot_{4})D_{v}G(\cdot_{1},\cdot_{3})D_{v}G(\cdot_{2},\cdot_{4})
				        \\&\ -2H(\cdot_{1},\cdot_{2},\eta_{i})H(\cdot_{3},\cdot_{4},\eta_{i})D_{v}G(\cdot_{2},\cdot_{4})D_{v}G(\cdot_{1},\cdot_{3}),
			\end{split}
\end{gather*}
where  in the last equality we used $DH(\eta_{1},\eta_{2},\eta_{3})=0$, which follows from  Proposition \ref{PROPdH} and the assumption that  $dH=0$. Furthermore, we have
 \begin{gather*}
 \begin{split}
					 D_{v}G(\cdot_{1},\cdot_{2})D_{v}G(\cdot_{3},\cdot_{2})H^{2}(\cdot_{1},\cdot_{3})
				=&\ 2H(v,\cdot_{1},\cdot_{2})H(v,\cdot_{3},\cdot_{2})D_{v}G(\cdot_{1},\cdot_{4})D_{v}G(\cdot_{3},\cdot_{4})
				\\&\ +H(\cdot_{1},\eta_{i},\eta_{j})H(\cdot_{2},\eta_{i},\eta_{j})D_{v}G(\cdot_{1},\cdot_{3})D_{v}G(\cdot_{2},\cdot_{3}).
			\end{split}
\end{gather*}
Lastly, we obtain
\begin{gather*}
 \begin{split}
-\frac{1}{3}\lo{H^{\mathfrak{G}}}^{2}|DG(\cdot,\cdot)|^{2}
    =&\ -2H(\cdot_{1},\cdot_{2},\eta_{i})H(\cdot_{3},\cdot_{4},\eta_{i})D_{v}G(\cdot_{2},\cdot_{4})D_{v}G(\cdot_{1},\cdot_{3}) 
    \\&\ -H(\cdot_{1},\eta_{i},\eta_{j})H(\cdot_{2},\eta_{i},\eta_{j})D_{v}G(\cdot_{1},\cdot_{3})D_{v}G(\cdot_{2},\cdot_{3}).
\end{split}
\end{gather*}
Combining the above computations yields the proposition.
 \end{proof}
 \end{prop}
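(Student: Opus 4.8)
The plan is to compute $\partial_t\lo{DG}^2$ and $\Delta\lo{DG}^2$ independently and then combine them so that the top-order ($D^3G$) contributions cancel, leaving a reaction term which I then reorganize into the signed squares displayed in the statement. Since $\dim M=1$ I may work with the unit field $v$, so that $\Delta=D_vD_v$ on scalars and $\lo{DG}^2=D_vG(\cdot_1,\cdot_2)D_vG(\cdot_1,\cdot_2)$, and I may use the simplified equations (\ref{EQ}) in place of (\ref{EQ0}). The one preliminary I need is the variation of the fiber connection under the flow, namely $\tfrac{d}{dt}(D_w\eta)=-[\tfrac{\partial A}{\partial t}w,\eta]$, which follows by differentiating in time the identity $D_v\{s,x\}=\{s,[(s^*\bar A)v,x]\}$ of Proposition \ref{PROPBRA}.

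First I would differentiate $\lo{DG}^2$ in $t$ by the product rule, tracking its three sources of time dependence: the evolving $G$ inside $DG$, which produces $2\,D_v(\partial_tG)(\cdot_1,\cdot_2)D_vG(\cdot_1,\cdot_2)$ together with the connection-variation term $4\,G([\tfrac{\partial A}{\partial t}v,\cdot_1],\cdot_2)D_vG(\cdot_1,\cdot_2)$; the two inverse fiber metrics used to contract, which produce $-2D_vG(\cdot_1,\cdot_2)D_vG(\cdot_3,\cdot_2)\partial_tG(\cdot_1,\cdot_3)$; and the inverse base metric used to trace, which produces $-\lo{DG}^2\partial_tg(v,v)$. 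I would then substitute the evolution equations (\ref{EQ}) for $\partial_tG$, $\partial_tg$ and $G(\tfrac{\partial A}{\partial t}v,\eta)$. The term $D_v(\partial_tG)$ supplies the highest-order piece $D_vD_vD_vG\cdot D_vG$, products of lower derivatives, a term $D_v\lo{[,]}^2(\tfrac12 D_vG(\cdot,\cdot)-D_vG(\eta_3,\eta_3))$, the quadratic $\tfrac12\lo{[,]}^2\lo{DG}^2$, and $\tfrac12 D_vH^2(\cdot_1,\cdot_2)D_vG(\cdot_1,\cdot_2)$; the $\tfrac{\partial A}{\partial t}$ term supplies $-D_vG([\cdot,\cdot],\cdot)D_vG([\cdot,\cdot],\cdot)$ and an $H^2$–bracket contraction; and the metric-trace terms supply $\tfrac12\lo{DG}^4$ and $\tfrac12\lo{DG}^2H^2(v,v)$.

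Separately I would compute $\Delta\lo{DG}^2=D_vD_v\lo{DG}^2$. The crucial subtlety throughout is that the fiber metric is \emph{not} $D$-parallel, so every derivative landing on a contracted fiber index generates an extra $DG\cdot DG$ correction; concretely the first derivative is $D_v\lo{DG}^2=2\big(D_vD_vG(\cdot_1,\cdot_2)-D_vG(\cdot_1,\cdot_3)D_vG(\cdot_2,\cdot_3)\big)D_vG(\cdot_1,\cdot_2)$, and a second application expresses $\Delta\lo{DG}^2$ through $D_vD_vD_vG\cdot D_vG$, $(D_vD_vG)^2$, $D_vD_vG\cdot(D_vG)^2$ and $(D_vG)^4$ terms. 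Subtracting, the two $D_vD_vD_vG\cdot D_vG$ contributions cancel, and the surviving second-order and quartic-gradient terms assemble, via the matching identity for $-2\lo{D_vD_vG-D_vG(*,\cdot)D_vG(*,\cdot)}^2$, into the stated gradient square.

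The main obstacle is the final algebraic reorganization, which splits into two bundles of identities. For the bracket terms I would use the variation formula $D\lo{[,]}^2=-\lo{[,]}^2\big(DG(\cdot,\cdot)-2DG(\eta_3,\eta_3)\big)$ — itself a consequence of differentiating the $G$–norm of the metric-independent bracket, the rank-one center $Z(\mathfrak G)$ (Propositions \ref{PROPZ}, \ref{PROPSBASIS}) forcing the $\eta_3$-direction to enter with opposite sign — to rewrite $D_v\lo{[,]}^2(D_vG(\cdot,\cdot)-2D_vG(\eta_3,\eta_3))$ as the signed square $-\lo{[,]}^2(D_vG(\cdot,\cdot)-2D_vG(\eta_3,\eta_3))^2$ and to extract the $-2\lo{[,]}^2(\cdots-\lo{DG}^2_{Z(\mathfrak G)})$ and $-4\lo{DG([\cdot,*],\cdot)}^2$ terms. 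For the $H$ terms I would expand $H^2$ and $D_vH^2$ from $H^2(X,Y)=\IP{i_XH,i_YH}$, separating the genuine $D_vH$ piece from the fiber-metric corrections, and crucially invoke $D_vH(\eta_1,\eta_2,\eta_3)=0$ — which holds by Proposition \ref{PROPdH} together with $dH=0$ — to annihilate the purely vertical derivative of $H$; this collapses the scattered $H^2$ contractions into $-\tfrac12\lo{DG}^2\tr_gH^2$, the two quartic $H\!\cdot\! H\!\cdot\! D_vG\!\cdot\! D_vG$ terms, the term $-\tfrac13\lo{H^{\mathfrak G}}^2\lo{DG(\cdot,\cdot)}^2$, and the single surviving $4D_vH(v,\cdot_1,\cdot_2)H(v,\cdot_3,\cdot_2)D_vG(\cdot_1,\cdot_3)$. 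The difficulty is entirely bookkeeping: keeping straight which inverse metric is differentiated at each stage and recognizing the precise index patterns that let these terms close up into the displayed squares.
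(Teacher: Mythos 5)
Your proposal is correct and follows essentially the same route as the paper: the same four-term product-rule decomposition of $\partial_t\lo{DG}^2$ (including the connection-variation term via $\tfrac{d}{dt}(D_w\eta)=-[\tfrac{\partial A}{\partial t}w,\eta]$), the same computation of $\Delta\lo{DG}^2$ from $D_v\lo{DG}^2=2\bigl(D_vD_vG-D_vG\cdot D_vG\bigr)D_vG$ with cancellation of the $D_vD_vD_vG\cdot D_vG$ terms, and the same closing identities — the completed square $-2\lo{D_vD_vG-D_vG(*,\cdot)D_vG(*,\cdot)}^2$, the bracket formula $D\lo{[,]}^2=-\lo{[,]}^2\bigl(DG(\cdot,\cdot)-2DG(\eta_3,\eta_3)\bigr)$, and the vanishing $D_vH(\eta_1,\eta_2,\eta_3)=0$ from Proposition \ref{PROPdH} with $dH=0$ to collapse the $H^2$ contractions.
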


\subsection{Evolution of \texorpdfstring{$\tr_{g}H^2$}{}}
In this subsection, we continue to assume $\mathcal{G}$ is a three-dimensional, nilpotent Lie group and $M$ is one-dimensional.  
 We will also let $v \in T_{m}M$ denote a $t$--dependent vector satisfying $g(v,v)=1$ and will let $\{\eta_{i}\}_{i=1,2,3}$ be an orthonormal basis for $(\mathfrak{G}|_{m},G|_{m})$ such that $\eta_{3} \in Z(\mathfrak{G})$. 

\begin{prop} \label{PROPHV1} 
Let $(\bar{g}(t),\bar{H}(t))$ be a solution to the flow equations  (\ref{EQ0}) and suppose $\mathcal{G}$ is a three-dimensional, nilpotent Lie group and $M$ is one-dimensional.  The following holds true:
\begin{align*}
 \frac{\partial}{\partial t}\tr_{g}H^2
 						=&\ \Delta \tr_{g}H^2 
						-2D_{v}H(v, \cdot_{1},\cdot_{2})D_{v}H(v, \cdot_{1},\cdot_{2})
						+4D_{v}H(v, \cdot_{1},\cdot_{2})H(v, \cdot_{3},\cdot_{2})D_{v}G(\cdot_{1},\cdot_{3})
				\\&\ -4H(v, \cdot_{1},\cdot_{2})H(v, \cdot_{3},\cdot_{2})D_{v}D_{v}G(\cdot_{1},\cdot_{3})
				+2H(v, \cdot_{1},\cdot_{2})H(v, \cdot_{3},\cdot_{2})D_{v}G(\cdot_{1},\cdot_{4})D_{v}G(\cdot_{3},\cdot_{4})
				\\&\ -2H(v, \cdot_{1},\cdot_{2})H(v, \cdot_{3},\cdot_{4})D_{v}G(\cdot_{1},\cdot_{3})D_{v}G(\cdot_{2},\cdot_{4})
				-\frac{1}{2}H^2(v,v)|DG|^{2}
				\\&\ -2D_{v}G([\cdot,\eta_{i}],\cdot)H^2(v,\eta_{i})
				-2|H(\pi_{TM}*,\cdot_{1},\cdot_{2})[\cdot_{1},\cdot_{2}]|^{2}
				\\&\ -2H(v,\cdot_{1},\cdot_{2})H(v,\cdot_{3},\cdot_{2})H(v,\cdot_{1},\cdot_{4})H(v,\cdot_{3},\cdot_{4})                        	             -2H^2(v,\eta_{i})H^2(v,\eta_{i})
				   -\frac{1}{2}H^2(v,v)H^2(v,v).
				\end{align*}
\begin{proof}
We begin with the basic calculation
\begin{align*}
\frac{\partial}{\partial t}\tr_{g}H^{2}
 							=&\ 2\frac{\partial H}{\partial t}(v, \cdot_{1},\cdot_{2})H(v, \cdot_{1},\cdot_{2})
							   -H(v, \cdot_{1},\cdot_{2})H(v, \cdot_{1},\cdot_{2})\frac{\partial g}{\partial t}(v,v)
							  \\&\ -2H(v, \cdot_{1},\cdot_{2})H(v, \cdot_{3},\cdot_{2})\frac{\partial G}{\partial t}(\cdot_{1},\cdot_{3}).
\end{align*}
Using (\ref{EQ}) and Proposition \ref{PROPDB1}, we first compute
	\begin{gather*}
	\begin{split}
	 \frac{\partial H}{\partial t}& (v, \cdot_{1},\cdot_{2})H(v, \cdot_{1},\cdot_{2})\\
	      =&\ D_{v}D_{v}H(v,\cdot_{1},\cdot_{2})H(v,\cdot_{1},\cdot_{2})
	         -2D_{v}D_{v}G(\cdot_{1},\cdot_{2})H(v,\cdot_{1},\cdot_{3})H(v,\cdot_{2},\cdot_{3})
			\\&\ -2D_{v}G(\cdot_{1},\cdot_{2})D_{v}H(v,\cdot_{1},\cdot_{3})H(v,\cdot_{2},\cdot_{3})
				 \\&\ +2D_{v}G(\cdot_{1},\cdot_{2})D_{v}G(\cdot_{1},\cdot_{3})H(v,\cdot_{3},\cdot_{4})H(v,\cdot_{2},\cdot_{4})
				\\&\  +\frac{1}{2}D_{v}(G([\cdot_{1},\cdot_{2}], \pi_{\mathfrak{G}}*) \wedge H(\cdot_{1},\cdot_{2}, *))(\cdot_{3},\cdot_{4}) H(v, \cdot_{3},\cdot_{4})
		 \\&\ -\frac{1}{2}G([\cdot_{1},\cdot_{2}], [\cdot_{3},\cdot_{4}])H(v,\cdot_{1},\cdot_{2}) H(v,\cdot_{3},\cdot_{4}) 
		 -D_{v}G([\cdot_{1},\eta_{i}],\cdot_{1})H(\eta_{i},\cdot_{2},\cdot_{3})H(v,\cdot_{2},\cdot_{3})
		 \\&\ -\frac{1}{2}H^2(v,\eta_{k})H(\eta_{k},\cdot_{1},\cdot_{2})H(v,\cdot_{1},\cdot_{2}).
	\end{split}
\end{gather*}
We also have
	\begin{gather*}
	\begin{split}
	 D_{v} & (G([\cdot_{1},\cdot_{2}], \pi_{\mathfrak{G}}*) \wedge H(\cdot_{1},\cdot_{2}, *))(\cdot_{3},\cdot_{4}) H(v, \cdot_{3},\cdot_{4})\\
	 		=&\ 2D_{v}G([\eta_{k},\eta_{l}],\cdot_{1})H(\eta_{k},\eta_{l},\cdot_{2})H(v,\cdot_{1},\cdot_{2}) +2D_{v}H(\eta_{i},\eta_{j},\eta_{k})H(v,[\eta_{i},\eta_{j}],\eta_{k})
			\\&\ -4D_{v}G(\cdot_{1},\cdot_{2})H(\cdot_{2},\eta_{i},\cdot_{3})H(v,[\cdot_{1},\eta_{i}],\cdot_{3})\\
	=&\ 2D_{v}G([\eta_{k},\eta_{l}],\cdot_{1})H(\eta_{k},\eta_{l},\cdot_{2})H(v,\cdot_{1},\cdot_{2})  -4D_{v}G(\cdot_{1},\cdot_{2})H(\cdot_{2},\eta_{i},\cdot_{3})H(v,[\cdot_{1},\eta_{i}],\cdot_{3}),
		\end{split}
\end{gather*}
where  in the last equality we used $DH(\eta_{1},\eta_{2},\eta_{3})=0$, which follows from  Proposition \ref{PROPdH} and the assumption that  $dH=0$.  Furthermore, we observe
 \begin{gather*}
	\begin{split}
 H(v, \cdot_{1},\cdot_{2})H(v, \cdot_{1},\cdot_{2})\frac{\partial g}{\partial t}(v,v)
   = \frac{1}{2}H^2(v,v)|DG|^{2}
      +\frac{1}{2}H^2(v,v)H^2(v,v).
\end{split}
\end{gather*}
We also compute
\begin{gather*}
	\begin{split}
 H(v, \cdot_{1},\cdot_{2})& H(v, \cdot_{3},\cdot_{2})\frac{\partial G}{\partial t}(\cdot_{1},\cdot_{3})\\
   						=&\ H(v, \cdot_{1},\cdot_{2})H(v, \cdot_{3},\cdot_{2})D_{v}D_{v}G(\cdot_{1},\cdot_{3}) -H(v, \cdot_{1},\cdot_{2})H(v, \cdot_{3},\cdot_{2})D_{v}G(\cdot_{1},\cdot_{4})D_{v}G(\cdot_{3},\cdot_{4})
		\\&\ +H(v, \cdot_{1},\cdot_{2})H(v, \cdot_{3},\cdot_{2})\lp{G([\cdot_{1},\cdot_{4}], [\cdot_{3},\cdot_{4}]) 
		        -\frac{1}{2}G([\cdot_{4},\cdot_{5}],\cdot_{1}) G([\cdot_{4},\cdot_{5}],\cdot_{3})     }
	\\&\	+\frac{1}{2}H(v, \cdot_{1},\cdot_{2})H(v, \cdot_{3},\cdot_{2}) H^2(\cdot_{1},\cdot_{3}).       
\end{split}
\end{gather*}
We turn to computing $\gD \tr_g H^2$.  First, we observe
\begin{gather*}
\begin{split}
 D\tr_{g}H^2
            = 2DH(v,\cdot_{1},\cdot_{2})H(v,\cdot_{1},\cdot_{2})
               -2H(v,\cdot_{1},\cdot_{2})H(v,\cdot_{3},\cdot_{2})DG(\cdot_{1},\cdot_{3}).
\end{split}
\end{gather*}
Using this, we compute
\begin{gather*}
\begin{split}
 \Delta \tr_{g}H^2 =&\
          2D_{v}D_{v}H(v, \cdot_{1},\cdot_{2})H(v, \cdot_{1},\cdot_{2})
          +2D_{v}H(v, \cdot_{1},\cdot_{2})D_{v}H(v, \cdot_{1},\cdot_{2})
         \\&\ -8D_{v}H(v,\cdot_{1},\cdot_{2})H(v,\cdot_{3},\cdot_{2})D_{v}G(\cdot_{1},\cdot_{3})
          -2H(v,\cdot_{1},\cdot_{2})H(v,\cdot_{3},\cdot_{2})D_{v}D_{v}G(\cdot_{1},\cdot_{3})
      \\&\ +4H(v,\cdot_{1},\cdot_{2})H(v,\cdot_{3},\cdot_{2})D_{v}G(\cdot_{1},\cdot_{4})D_{v}G(\cdot_{3},\cdot_{4})\\
       &\ +2 H(v,\cdot_{1},\cdot_{2})H(v,\cdot_{3},\cdot_{4})D_{v}G(\cdot_{1},\cdot_{3})D_{v}G(\cdot_{2},\cdot_{4}).
      \end{split}
\end{gather*}
Combining the above computations yields the preliminary formula
\begin{align*}
 \frac{\partial}{\partial t}\tr_{g}H^2
 						=&\ \Delta \tr_{g}H^2 
						-2D_{v}H(v, \cdot_{1},\cdot_{2})D_{v}H(v, \cdot_{1},\cdot_{2})
						+4D_{v}H(v, \cdot_{1},\cdot_{2})H(v, \cdot_{3},\cdot_{2})D_{v}G(\cdot_{1},\cdot_{3})
				\\&\ -4H(v, \cdot_{1},\cdot_{2})H(v, \cdot_{3},\cdot_{2})D_{v}D_{v}G(\cdot_{1},\cdot_{3})
				+2H(v, \cdot_{1},\cdot_{2})H(v, \cdot_{3},\cdot_{2})D_{v}G(\cdot_{1},\cdot_{4})D_{v}G(\cdot_{3},\cdot_{4})
				\\&\ -2H(v, \cdot_{1},\cdot_{2})H(v, \cdot_{3},\cdot_{4})D_{v}G(\cdot_{1},\cdot_{3})D_{v}G(\cdot_{2},\cdot_{4})
				-\frac{1}{2}H^2(v,v)|DG|^{2}
				\\&\ 
	            +2D_{v}G([\eta_{i},\eta_{j}],\cdot_{1})H(\eta_{i},\eta_{j},\cdot_{2})H(v,\cdot_{1},\cdot_{2})
				 -2D_{v}G([\cdot_{1},\eta_{i}],\cdot_{1})H(\eta_{i},\cdot_{2},\cdot_{3})H(v,\cdot_{2},\cdot_{3})
			\\&\	-4D_{v}G(\eta_{i},\eta_{j})H(\eta_{j},\eta_{k},\eta_{l})H(v,[\eta_{i},\eta_{k}],\eta_{l})
			     -G([\cdot_{1},\cdot_{2}], [\cdot_{3},\cdot_{4}])H(v,\cdot_{1},\cdot_{2})H(v,\cdot_{3},\cdot_{4})
				\\&\ -2|[,]|^{2}H(v,\cdot_{1},\cdot_{2})H(v,\cdot_{3},\cdot_{2})\lp{\frac{1}{2}G(\cdot_{1},\cdot_{4})G(\cdot_{3},\cdot_{4}) -G(\cdot_{1},\eta_{3})G(\cdot_{3},\eta_{3}) }
				\\&\ -H(v,\cdot_{1},\cdot_{2})H(v,\cdot_{3},\cdot_{2})H^2(\cdot_{1},\cdot_{3})
				-H^2(v,\eta_{i})H^2(v,\eta_{i})
				-\frac{1}{2}H^2(v,v)H^2(v,v).
				\end{align*}
We now observe various simplifications.  First,
\begin{gather*}
\begin{split}
2D_{v}G([\eta_{i},\eta_{j}],\cdot_{1})H(\eta_{i},\eta_{j},\cdot_{2})H(v,\cdot_{1},\cdot_{2})-4D_{v}G(\eta_{i},\eta_{j})H(\eta_{j},\eta_{k},\eta_{l})H(v,[\eta_{i},\eta_{k}],\eta_{l})=0.
\end{split}
\end{gather*}
Next, we have
\begin{gather*}
\begin{split}
2|H(\pi_{TM}*,\cdot_{1},\cdot_{2})[\cdot_{1},\cdot_{2}]|^{2} =&\ G([\cdot_{1},\cdot_{2}], [\cdot_{3},\cdot_{4}])H(v,\cdot_{1},\cdot_{2})H(v,\cdot_{3},\cdot_{4})
				\\&\ +2|[,]|^{2}H(v,\cdot_{1},\cdot_{2})H(v,\cdot_{3},\cdot_{2}) \left(\frac{1}{2}G(\cdot_{1},\cdot_{4})G(\cdot_{3},\cdot_{4}) -G(\cdot_{1},\eta_{3})G(\cdot_{3},\eta_{3}) \right).
\end{split}
\end{gather*}
Lastly, we have
\begin{gather*}
\begin{split}
 -H(v,\cdot_{1},\cdot_{2})H(v,\cdot_{3},\cdot_{2})H^2(\cdot_{1},\cdot_{3})
                = -2H(v,\cdot_{1},\cdot_{2})H(v,\cdot_{3},\cdot_{2})H(v,\cdot_{1},\cdot_{4})H(v,\cdot_{3},\cdot_{4})                        	             -H^2(v,\eta_{i})H^2(v,\eta_{i}).
\end{split}
\end{gather*}
Combining the above computations yields the proposition.
\end{proof}
\end{prop}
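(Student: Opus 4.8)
The plan is to differentiate the scalar function $\tr_{g}H^2 = H(v,\cdot_1,\cdot_2)H(v,\cdot_1,\cdot_2)$ on the one-dimensional base directly, where $v$ is the $t$-dependent unit vector on $M$ and the traces over $\cdot_1,\cdot_2$ are taken with $g_{\mathcal{E}}^{-1}=G^{-1}\oplus g^{-1}$. Applying the product rule produces three groups of terms: one from differentiating $H$ itself, one from differentiating $g^{-1}$ in the $v$-slot, which contributes $-H(v,\cdot_1,\cdot_2)H(v,\cdot_1,\cdot_2)\tfrac{\partial g}{\partial t}(v,v)$, and one from differentiating $g_{\mathcal{E}}^{-1}$ in the traced slots. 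A useful structural observation is that, since $\dim M = 1$, any $TM$-component appearing in the $\cdot_1,\cdot_2$ traces is proportional to $v$ and is killed by antisymmetry of $H$; hence only the $\mathfrak{G}$-part survives and this last group reduces to $-2H(v,\cdot_1,\cdot_2)H(v,\cdot_3,\cdot_2)\tfrac{\partial G}{\partial t}(\cdot_1,\cdot_3)$. This is the ``basic calculation'' recorded at the start of the proof.

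The main obstacle is the first group, $2\tfrac{\partial H}{\partial t}(v,\cdot_1,\cdot_2)H(v,\cdot_1,\cdot_2)$. Here I would use the flow equation for $H$ in (\ref{EQ}), namely $\tfrac{\partial H}{\partial t}=dB$, expand $dB$ via Proposition \ref{PROPDB1} (whose $F$-terms vanish since $\dim M = 1$), and substitute the three-term expression for $B$. This is where the bookkeeping is heaviest: one must track the wedge products against $H(\cdot_1,\cdot_2,*)$, the projections $\pi_{\mathfrak{G}}$, and the Lie algebroid bracket terms. Throughout, the identity $DH(\eta_1,\eta_2,\eta_3)=0$---which follows from Proposition \ref{PROPdH} together with the closedness $dH=0$---is used repeatedly to discard terms, exactly as in the proofs of Propositions \ref{PROPEVH} and \ref{PROPDG1}. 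The output is the stated expansion of $\tfrac{\partial H}{\partial t}(v,\cdot_1,\cdot_2)H(v,\cdot_1,\cdot_2)$ into second-derivative-of-$H$ terms, mixed $DG$--$H$ terms, bracket terms, and the quartic contribution $-\tfrac12 H^2(v,\eta_k)H(\eta_k,\cdot_1,\cdot_2)H(v,\cdot_1,\cdot_2)$.

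Next I would assemble the metric contributions and the spatial Laplacian. The second and third groups are evaluated by inserting the evolution equations (\ref{EQ}) for $\tfrac{\partial g}{\partial t}$ and $\tfrac{\partial G}{\partial t}$. For the diffusion term I would compute $\Delta \tr_{g}H^2 = D_{v}D_{v}\tr_{g}H^2$, first obtaining $D\tr_{g}H^2 = 2DH(v,\cdot_1,\cdot_2)H(v,\cdot_1,\cdot_2) - 2H(v,\cdot_1,\cdot_2)H(v,\cdot_3,\cdot_2)DG(\cdot_1,\cdot_3)$ and then differentiating once more, using $\nabla^{L}_{v}v=0$ on the one-dimensional base to avoid spurious acceleration terms. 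Matching the second-derivative-of-$H$ terms between $\Delta \tr_{g}H^2$ and the $\tfrac{\partial H}{\partial t}$ expansion is precisely what makes the heat operator appear, and collecting everything yields the ``preliminary formula.''

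The final step is purely algebraic and exploits the nilpotent structure: by Proposition \ref{PROPZ} the center $Z(\mathfrak{G})$ has rank one (spanned by $\eta_3$) and $[\mathfrak{G},\mathfrak{G}]=Z(\mathfrak{G})$, so in the frame of Proposition \ref{PROPSBASIS} only $[\eta_1,\eta_2]$ is nonzero. Using this I would verify the three identities that repackage the leftover terms: the cancellation $2D_{v}G([\eta_i,\eta_j],\cdot_1)H(\eta_i,\eta_j,\cdot_2)H(v,\cdot_1,\cdot_2) - 4D_{v}G(\eta_i,\eta_j)H(\eta_j,\eta_k,\eta_l)H(v,[\eta_i,\eta_k],\eta_l)=0$; the identity rewriting the remaining bracket-$H$ terms as $2|H(\pi_{TM}*,\cdot_1,\cdot_2)[\cdot_1,\cdot_2]|^2$; and the decomposition of $-H(v,\cdot_1,\cdot_2)H(v,\cdot_3,\cdot_2)H^2(\cdot_1,\cdot_3)$ into the quartic $H$-forms. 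Substituting these into the preliminary formula gives the claimed evolution equation. I expect the second paragraph---the $dB$ expansion---to be the genuinely delicate part, while the remaining steps, though lengthy, are routine given the identities already established in the excerpt.
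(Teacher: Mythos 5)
Your proposal follows the paper's proof essentially line for line: the same initial product-rule decomposition into $\partial_t H$, $\partial_t g$, and $\partial_t G$ contributions, the same expansion of $\tfrac{\partial H}{\partial t}=dB$ via Proposition \ref{PROPDB1} with repeated use of $DH(\eta_1,\eta_2,\eta_3)=0$ from Proposition \ref{PROPdH} and $dH=0$, the same two-step computation of $\Delta \tr_g H^2$ from $D\tr_g H^2$, and the identical three closing algebraic identities (the $DG$--bracket cancellation, the repackaging as $2|H(\pi_{TM}*,\cdot_1,\cdot_2)[\cdot_1,\cdot_2]|^2$, and the quartic-$H$ decomposition). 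Your added remarks---that antisymmetry of $H$ on the one-dimensional base kills the $TM$-trace contributions, and that $\nabla^L_v v=0$ for the unit field $v$---are correct and consistent with what the paper leaves implicit.
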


\subsection{Evolution of \texorpdfstring{$\brs{DG}^2 + \tr_{g}H^2$}{}} \label{SECDGH12}

In this subsection, we continue to assume $\mathcal{G}$ is a three-dimensional, nilpotent Lie group and $M$ is one-dimensional.  
 We will also let $v \in T_{m}M$ denote a $t$--dependent vector satisfying $g(v,v)=1$ and will let $\{\eta_{i}\}_{i=1,2,3}$ be an orthonormal basis for $(\mathfrak{G}|_{m},G|_{m})$ such that $\eta_{3} \in Z(\mathfrak{G})$. 

\begin{prop} \label{PROPEVDG} 
 Let $(\bar{g}(t),\bar{H}(t))$ be a solution to the flow equations  (\ref{EQ0}) and suppose $\mathcal{G}$ is a three-dimensional, nilpotent Lie group and $M$ is one-dimensional.  The following holds true:
 \begin{align*}
 \frac{\partial}{\partial t} \lp{|DG|^{2}+ \tr_{g}H^{2}}= \Delta (|DG|^{2}+ \tr_{g}H^{2}) -\frac{1}{2} \lp{|DG|^{2}+ \tr_{g}H^{2}}^{2} -S_{B},
 \end{align*}
 where 
 \begin{align*}
 S_{B}= &\ 2|(D_{\cdot}DG)_{\cdot}(*_{1},*_{2})-D_{\cdot_{1}}G(*_{1},\cdot_{2})D_{\cdot_{1}}G(*_{2},\cdot_{2}) +H(v,*_{1},\cdot)H(v,*_{2},\cdot)|^{2}
            \\&\  +2|D_{\cdot}H(\cdot,*_{1},*_{2})+H(v,\cdot,*_{1})D_{v}G(\cdot,*_{2}) -H(v,\cdot,*_{2})D_{v}G(\cdot,*_{1})|^{2}
            \\&\ +|[,]|^{2}( D_{v}G(\cdot,\cdot)-2D_{v}G(\eta_{3},\eta_{3}) )^{2}
            + 2|[,]|^{2}(D_{v}G(\cdot,\eta_{3}) D_{v}G(\cdot,\eta_{3}) - |DG|_{Z(\mathfrak{G})}^{2}  )
            \\&\ +2|DG([\cdot,*],\cdot)|^{2} +2|D_{v}G([\cdot,*],\cdot) + H^{2}(v, \pi_{\mathfrak{G}}*)|^{2}
            \\&\ +2|H(\pi_{TM}*,\cdot_{1},\cdot_{2})[\cdot_{1},\cdot_{2}]|^{2} +\frac{1}{3} \lo{H^{\mathfrak{G}}}^{2}|DG(\cdot,\cdot)|^{2}.
 \end{align*}
\begin{proof} This follows directly from Propositions \ref{PROPDG1} and \ref{PROPHV1} after observing the further simplification
 \begin{gather*}
 \begin{split}
 2|(D_{\cdot}&DG)_{\cdot}(*_{1},*_{2})-D_{\cdot_{1}}G(*_{1},\cdot_{2})D_{\cdot_{1}}G(*_{2},\cdot_{2}) +H(v,*_{1},\cdot)H(v,*_{2},\cdot)|^{2}
   \\& \         +2|D_{\cdot}H(\cdot,*_{1},*_{2})+H(v,\cdot,*_{1})D_{v}G(\cdot,*_{2}) -H(v,\cdot,*_{2})D_{v}G(\cdot,*_{1})|^{2} 
    \\ =&\   2 |(D_{\cdot}DG)_{\cdot}(*_{1},*_{2})-D_{\cdot_{1}}G(*_{1},\cdot_{2})D_{\cdot_{1}}G(*_{2},\cdot_{2})|^{2}
  +4H(v, \cdot_{1},\cdot_{2})H(v, \cdot_{3},\cdot_{2})D_{v}D_{v}G(\cdot_{1},\cdot_{3})
   \\&\  +2H(v,\cdot_{1},\cdot_{2})H(v,\cdot_{3},\cdot_{2})H(v,\cdot_{1},\cdot_{4})H(v,\cdot_{3},\cdot_{4})   
    +2D_{v}H(v, \cdot_{1},\cdot_{2})D_{v}H(v, \cdot_{1},\cdot_{2})
   \\&\  -8D_{v}H(v, \cdot_{1},\cdot_{2})H(v, \cdot_{3},\cdot_{2})D_{v}G(\cdot_{1},\cdot_{3})
    +4H(v, \cdot_{1},\cdot_{2})H(v, \cdot_{3},\cdot_{4})D_{v}G(\cdot_{1},\cdot_{3})D_{v}G(\cdot_{2},\cdot_{4}).
\end{split}
 \end{gather*}
 \end{proof}
 \end{prop}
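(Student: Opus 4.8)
The plan is to simply add the two evolution equations from Propositions \ref{PROPDG1} and \ref{PROPHV1} and then reorganize the resulting right-hand side into the desired structure $\gD(|DG|^2 + \tr_g H^2) - \tfrac{1}{2}(|DG|^2 + \tr_g H^2)^2 - S_B$. The Laplacian term is immediate by linearity. The key observation is that the quadratic reaction terms must combine to produce the cross term hidden inside the square $-\tfrac{1}{2}(|DG|^2 + \tr_g H^2)^2 = -\tfrac{1}{2}|DG|^4 - |DG|^2 \tr_g H^2 - \tfrac{1}{2}(\tr_g H^2)^2$. Since $\tr_g H^2 = H^2(v,v)$ here (because $M$ is one-dimensional and $v$ is the unit vector), the first proposition already contributes $-\tfrac{1}{2}|DG|^4$ and the term $-\tfrac{1}{2}|DG|^2 \tr_g H^2$, while the second contributes $-\tfrac{1}{2}(\tr_g H^2)^2$ and another $-\tfrac{1}{2}H^2(v,v)|DG|^2$; these $-\tfrac{1}{2}|DG|^2\tr_g H^2$ contributions add to the full cross term $-|DG|^2\tr_g H^2$, completing the square. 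This bookkeeping is the conceptual heart of why the combination is clean.

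The main work, as the statement itself flags, is the algebraic identity displayed at the end: the two "good" squares $-2|(D_\cdot DG)_\cdot(*_1,*_2) - D_{\cdot_1}G(*_1,\cdot_2)D_{\cdot_1}G(*_2,\cdot_2) + H(v,*_1,\cdot)H(v,*_2,\cdot)|^2$ and $-2|D_\cdot H(\cdot,*_1,*_2) + \cdots|^2$ appearing in $S_B$ must be shown to account for the remaining second-derivative and mixed $H$–$DG$ terms. The strategy is to expand each square fully and match it against the leftover terms after the reaction-square cancellation. The first square absorbs the pure $|(D_\cdot DG)_\cdot - (DG)^2|^2$ square already present in Proposition \ref{PROPDG1}, and its cross term with $H$ produces precisely the $+4H(v,\cdot_1,\cdot_2)H(v,\cdot_3,\cdot_2)D_vD_vG(\cdot_1,\cdot_3)$ term needed to cancel the $-4H\,H\,D_vD_vG$ term from Proposition \ref{PROPHV1}, while its $H^4$ cross term supplies $+2(H^2_v)^2$-type contributions. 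The second square bundles the $D_vH$-derivative terms together with the mixed $D_vH \cdot H \cdot D_vG$ and $H\,H\,DG\,DG$ terms, and one uses $DH(\eta_1,\eta_2,\eta_3)=0$ (from Proposition \ref{PROPdH} together with $dH=0$) to suppress the would-be $\mathfrak{G}$-only components so that only the base-directed pieces survive.

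I expect the principal obstacle to be the sign- and trace-bookkeeping in verifying the displayed expansion identity: one must track roughly a dozen quartic and cubic contractions across the two source propositions, being careful that $H(v,\cdot,*)$ cross terms carry the correct factors $(4, -8, +2)$ after the binomial expansion of the $H$–$DG$ square, and that the antisymmetrization in the second square correctly reproduces the $-2H\,H\,D_vG\,D_vG$ terms with the right index pairings. The terms involving $|[,]|^2$, $|DG([\cdot,*],\cdot)|^2$, $|H(\pi_{TM}*,\cdot_1,\cdot_2)[\cdot_1,\cdot_2]|^2$, and $\tfrac{1}{3}|H^{\mathfrak G}|^2|DG(\cdot,\cdot)|^2$ should transfer directly from the two propositions without further manipulation, so the crux is genuinely the completion of the two squares and nothing deeper. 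Once the displayed identity is verified by direct expansion, collecting all remaining terms yields $-S_B$ exactly, and the proof concludes.
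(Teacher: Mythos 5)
Your proposal matches the paper's proof: the paper likewise adds Propositions \ref{PROPDG1} and \ref{PROPHV1}, notes that the reaction terms $-\tfrac{1}{2}|DG|^4$, $-\tfrac{1}{2}|DG|^2\tr_g H^2$, $-\tfrac{1}{2}H^2(v,v)|DG|^2$, and $-\tfrac{1}{2}(H^2(v,v))^2$ assemble into $-\tfrac{1}{2}(|DG|^2+\tr_g H^2)^2$, and then verifies exactly the displayed completion-of-squares identity by direct expansion, which is indeed the entire content of its one-line proof. The only slight inaccuracy in your account is the claim that the $|DG([\cdot,*],\cdot)|^2$-type terms transfer ``without further manipulation'': in fact the cross terms $-2H^2(v,\eta_i)D_vG([\cdot,\eta_i],\cdot)$ contributed by each of the two propositions, together with $-2H^2(v,\eta_i)H^2(v,\eta_i)$ from Proposition \ref{PROPHV1}, must be absorbed into the square $2|D_vG([\cdot,*],\cdot)+H^{2}(v,\pi_{\mathfrak{G}}*)|^2$ appearing in $S_B$ --- a second, routine completion of square in the same spirit as the main identity.
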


\subsection{Global existence and universal type III bounds}\label{SECGLOBAL12}

\begin{cor} \label{c:typeIIIestimates} 
Let $(\bar{g}(t),\bar{H}(t))$ be a solution to the flow equations  (\ref{EQ0}) and suppose $\mathcal{G}$ is a three-dimensional, nilpotent Lie group and $M$ is one-dimensional and compact.
Then for all smooth existence times $t > 0$ one has
\begin{align*}
\brs{[,]}^2 \leq&\ \frac{2}{3t}, \qquad
|H^{\mathfrak{G}}|^{2} \leq \frac{2}{t}, \qquad 
|DG|^{2}+ \tr_{g}H^2 \leq \frac{2}{t}.
\end{align*}
\begin{proof} These are straightforward applicaions of the maximum principle using Propositions \ref{PROPbra2}, \ref{PROPEVH}, and \ref{PROPEVDG}.  We give the proof for $\brs{[,]}^2$, with the remaining cases being analogous.  Let
\begin{align*}
    \Phi(x,t) := t \brs{[,]}^2.
\end{align*}
A simple computation using Proposition \ref{PROPbra2} gives
\begin{align*}
    \left(\dt - \gD \right) \Phi \leq&\ \brs{[,]}^2 \left( 1 - \frac{3}{2} \Phi \right).
\end{align*}
By the maximum principle, since $\sup_{M \times \{0\}} \Phi = 0 \leq \frac{2}{3}$, it follows that $\sup_{M \times \{t\}} \Phi \leq \frac{2}{3}$ for all smooth existence times $t$, yielding the result.
\end{proof}
\end{cor}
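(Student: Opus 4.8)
The plan is to prove all three bounds by a single, uniform maximum-principle argument, of which the estimate for $\brs{[,]}^2$ recorded above is the prototype. The common structure is that Propositions \ref{PROPbra2}, \ref{PROPEVH}, and \ref{PROPEVDG} each express the evolution of the relevant quantity $u$ in the reaction--diffusion form $\partial_t u = \Delta u - c\,u^2 - (\text{nonnegative terms})$, with $c = \tfrac{3}{2}$ for $u = \brs{[,]}^2$ and $c = \tfrac{1}{2}$ for both $u = \brs{H^{\mathfrak{G}}}^2$ and $u = \brs{DG}^2 + \tr_g H^2$. Once the nonnegative terms are discarded, each $u$ obeys the differential inequality $(\partial_t - \Delta)u \leq -c\,u^2$, and the conclusion $u \leq 1/(ct)$ then follows exactly as in the displayed argument for $\brs{[,]}^2$.

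The first step is therefore to verify that the terms being discarded are genuinely nonnegative. For $\brs{[,]}^2$ this is immediate: the quantity $S_A$ of Proposition \ref{PROPbra2} is a sum of two norms together with the manifestly nonnegative term $\tfrac{1}{6}\brs{H^{\mathfrak{G}}}^2\brs{[,]}^2$. For $\brs{H^{\mathfrak{G}}}^2$, the factor $\tfrac{1}{2}\brs{[,]}^2 + \brs{DG(\cdot,\cdot)}^2 + \tr_g H^2$ multiplying $-\brs{H^{\mathfrak{G}}}^2$ in Proposition \ref{PROPEVH} is nonnegative, where one uses that $\tr_g H^2 \geq 0$ since $H^2(v,v) = |i_v H|^2$. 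For $\brs{DG}^2 + \tr_g H^2$ one must check that $S_B$ of Proposition \ref{PROPEVDG} is nonnegative; all of its terms are squares except $2\brs{[,]}^2\big(D_v G(\cdot,\eta_3)D_v G(\cdot,\eta_3) - \brs{DG}^2_{Z(\mathfrak{G})}\big)$, and unwinding the traced notation this equals $2\brs{[,]}^2\big((D_v G(\eta_1,\eta_3))^2 + (D_v G(\eta_2,\eta_3))^2\big) \geq 0$.

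The second step is the rescaling and the maximum principle, carried out uniformly. For each $u$ set $\Phi = t\,u$ and compute $(\partial_t - \Delta)\Phi = u + t(\partial_t - \Delta)u \leq u\,(1 - c\,\Phi)$. Since $M$ is compact, $\Phi$ attains its maximum over $M \times [0,T]$ for any smooth existence time $T$; the maximum is $0$ on the initial slice $t = 0$, while at any later maximum point one has $\Delta\Phi \leq 0$ and $\partial_t\Phi \geq 0$, forcing $u\,(1 - c\,\Phi) \geq 0$ and hence $\Phi \leq 1/c$ (the locus $u = 0$ being trivial). Evaluating $1/c$ in the three cases gives the claimed bounds $\tfrac{2}{3t}$, $\tfrac{2}{t}$, and $\tfrac{2}{t}$.

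The calculation is routine once the three evolution equations are in hand, so the only point demanding care is the sign of $S_B$: its $Z(\mathfrak{G})$-term is the one place where the abstract trace conventions must be made explicit in order to confirm that the discarded remainder does not destroy the inequality $(\partial_t - \Delta)u \leq -\tfrac{1}{2}u^2$. I expect this verification to be the main, if modest, obstacle.
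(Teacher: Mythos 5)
Your proposal is correct and follows essentially the same route as the paper: rescale each quantity by $t$, apply the maximum principle on the compact $M$ using the reaction--diffusion inequalities from Propositions \ref{PROPbra2}, \ref{PROPEVH}, and \ref{PROPEVDG}, and read off $u \leq 1/(ct)$ with $c = \tfrac{3}{2}, \tfrac{1}{2}, \tfrac{1}{2}$ respectively (the paper carries this out only for $\brs{[,]}^2$ and declares the other cases analogous). Your explicit verification that the non-square term $2\brs{[,]}^2\bigl(D_vG(\cdot,\eta_3)D_vG(\cdot,\eta_3) - \brs{DG}^2_{Z(\mathfrak{G})}\bigr)$ in $S_B$ unwinds to $2\brs{[,]}^2\bigl((D_vG(\eta_1,\eta_3))^2 + (D_vG(\eta_2,\eta_3))^2\bigr) \geq 0$ is accurate and fills in the one sign check the paper leaves implicit.
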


\begin{prop} \label{p:curvatureev} Given a solution to generalized Ricci flow as above, suppose further that $\mathcal{G}$ is a three-dimensional nilpotent Lie group and $M$ is one-dimensional.  Then
\begin{align*}
    \dt D D G =&\ \gD D D G + D^3 G \star D G + \left(D H + D^2 G + \left( H + DG + [,] \right)^{*2} \right)^{*2},\\
    \dt D H =&\ \gD D H + D^2 H \star H + \left(D H + D^2 G + \left( H + DG + [,] \right)^{*2} \right)^{*2}.
\end{align*}
Also one has the universal inequalities
\begin{align*}
    \left(\dt - \gD \right) \left( \brs{D^2 G}^2 + \brs{D H}^2 \right) \leq&\ - \left( \brs{D^3 G}^2 + \brs{D^2 H}^2 \right) + C_1 \left( \brs{D^2 G}^3 + \brs{D H}^3 \right) + C_2 t^{-3}\\
    \left(\dt - \gD \right) \left( \brs{DG}^2 + \tr_g H^2 \right) \leq&\ - \brs{D^2 G}^2 - \brs{D H}^2 + C t^{-2}.
\end{align*}
\begin{proof} The evolution equations for $DDG$ and $DH$ are straightforward consequences of (\ref{EQ}).  Building upon these, the differential inequality for $\brs{D^2G}^2 + \brs{D H}^2$ follows from further elementary estimates and Corollary \ref{c:typeIIIestimates}.  To prove the final inequality we first notice using Proposition \ref{PROPdH} and Corollary \ref{c:typeIIIestimates} that
\begin{align*}
    S_B \geq&\ 2 \brs{D D G + DG^{*2} + H^{*2}}^2 + 2 \brs{DH + H \star DG}^2\\
    \geq&\ \brs{D DG}^2 + \brs{DH}^2 - C t^{-2},
\end{align*}
and the claim then follows from Proposition \ref{PROPEVDG}.
\end{proof}
\end{prop}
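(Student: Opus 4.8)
The plan is to treat the three assertions separately, since they require genuinely different ingredients: a direct covariant differentiation of the flow system (\ref{EQ}) for the two evolution equations, a Bochner-type energy computation for the inequality on $|D^2G|^2 + |DH|^2$, and a completion of squares in the already-established quantity $S_B$ for the inequality on $|DG|^2 + \tr_g H^2$.

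First I would derive the equations for $DDG$ and $DH$ by differentiating (\ref{EQ}) --- twice for $G$, once for $H$, recalling $\partial_t H = dB$. Three structural facts keep this clean. Since $\dim M = 1$, all spatial derivatives are $D_v$ in a single direction, so $D$ commutes with $\Delta = D_vD_v$ with no curvature correction, producing the stated $\Delta DDG$ and $\Delta DH$. The bracket is $D$-parallel: by Proposition \ref{PROPBRA}(3) and the Jacobi identity, $D_v[\eta,\eta'] = [D_v\eta,\eta'] + [\eta, D_v\eta']$, so derivatives in the bracket-terms of (\ref{EQ}) fall only on the metric factors, giving contributions of schematic type $D^kG\star[,]^{*2}$. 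Finally, $D$ is time-dependent with $\frac{d}{dt}(D_w\eta) = -[\partial_t A\, w, \eta]$ and $\partial_t A = DG\star[,] + H^{*2}$ by (\ref{EQ}), so commuting $\partial_t$ past $D$ generates precisely the products collected in $(H + DG + [,])^{*2}$. Extracting the top-order reaction terms ($D^3G\star DG$ from the $DG^{*2}$ term, and the analogous top-order $H$-term from $dB$) and sorting the rest into the schematic square then yields both evolution equations.

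Next, for $|D^2G|^2 + |DH|^2$, I would apply $(\partial_t - \Delta)|T|^2 = 2\langle(\partial_t - \Delta)T, T\rangle - 2|DT|^2$ to $T = D^2G$ and $T = DH$ and add (the extra reaction terms from differentiating the time-dependent $g_{\mathcal E}$-norm are of the same schematic form and treated identically). The $-2|DT|^2$ terms supply the good contribution $-(|D^3G|^2 + |D^2 H|^2)$. Every other term is a contraction of a schematic reaction expression against $D^2G$ or $DH$ and is handled by Young's inequality: genuinely cubic terms such as $(D^2G)^{*2}$ paired with $D^2G$ are dominated by $C_1(|D^2G|^3 + |DH|^3)$; the cross-sector top-order couplings $D^3G\star H$ (in $\partial_t DH$) and $D^2 H\star H$ (in $\partial_t DDG$) are split as $\epsilon(|D^3G|^2 + |D^2 H|^2) + C|H|^2(|D^2G|^2 + |DH|^2)$ and absorbed using the good gradient terms together with $|H|^2 \leq |H^{\mathfrak{G}}|^2 + \tr_g H^2 \leq Ct^{-1}$ from Corollary \ref{c:typeIIIestimates} (valid because $\dim M = 1$ forces $H$ to carry at most one base index); and every surviving term with a surplus factor of $|DG|^2, |[,]|^2 \leq Ct^{-1}$ is converted by the elementary split $t^{-1}x^2 \leq \frac23 x^3 + \frac13 t^{-3}$ into $C_1(|D^2G|^3 + |DH|^3) + C_2 t^{-3}$.

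Finally, the inequality on $|DG|^2 + \tr_g H^2$ needs no new computation: it follows by completing squares in $S_B$ from Proposition \ref{PROPEVDG}. All terms of $S_B$ past its first two are manifestly nonnegative, so $-S_B \leq -2|D^2G - DG^{*2} + H^{*2}|^2 - 2|DH + H\star DG|^2$; using Proposition \ref{PROPdH} and $dH = 0$ to see that $DH$ reduces to the components $D_vH(v,\cdot,\cdot)$ captured by the second square, the bound $|a+b|^2 \geq \frac12|a|^2 - |b|^2$ gives $2|D^2G - DG^{*2} + H^{*2}|^2 \geq |D^2G|^2 - C(|DG|^4 + |H|^4)$ and $2|DH + H\star DG|^2 \geq |DH|^2 - C|H|^2|DG|^2$, all errors being $\leq Ct^{-2}$ by Corollary \ref{c:typeIIIestimates}; discarding the nonpositive $-\frac12(|DG|^2 + \tr_g H^2)^2$ from Proposition \ref{PROPEVDG} yields the claim. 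I expect the main obstacle to be the second inequality, and within it the point that the cross-sector top-order terms coupling the $G$- and $H$-equations must be absorbed into the good gradient terms rather than destroying coercivity --- which is exactly what forces the use of the sharp $O(t^{-1})$ decay of $|H|^2$ from Corollary \ref{c:typeIIIestimates}.
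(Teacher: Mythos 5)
Your proposal is correct and takes essentially the same route as the paper: deriving the schematic evolution equations by differentiating (\ref{EQ}), proving the second inequality by Bochner-type identities and Young's inequality combined with the decay of Corollary \ref{c:typeIIIestimates}, and proving the third by discarding the manifestly nonnegative terms of $S_B$, using Proposition \ref{PROPdH} with $dH=0$ to see that $\brs{DH}^2$ is captured by the $D_vH(v,\cdot,\cdot)$ components, and concluding via Proposition \ref{PROPEVDG}. Your only slip is a harmless misplacement of the top-order couplings $D^2H \star H$ and $D^3G \star H$ between the two evolution equations, which your symmetric $\epsilon$-absorption into $-\left(\brs{D^3G}^2 + \brs{D^2H}^2\right)$ handles either way.
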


\begin{thm} \label{t:lte3+1} Let $(P^4, \bar{g}, \bar{H})$ be a Riemannian metric and closed three-form on the nilpotent $\mathcal G$-principal bundle $P \to M$, where $M$ is compact and one-dimensional.  Then the solution to generalized Ricci flow with initial data $(\bar{g}, \bar{H})$ exists on $[0,\infty)$, and there is a universal constant $C$ so that the solution satisfies the type III estimates
\begin{align*}
    \sup_{M \times [0,\infty)} t \left( \brs{\bar{\Rm}} + \brs{\bar{H}}^2 \right) \leq C,\\
    \sup_{M \times [0,\infty)} t \brs{[,]}^2 \leq \frac{2}{3},\\
    \sup_{M \times [0,\infty)} t \brs{H^{\mathfrak G}}^2 \leq 2,\\
    \sup_{M \times [0,\infty)} t \left( \brs{DG}^2 + \tr_g H^2 \right) \leq 2.
\end{align*}
\begin{proof} By (\cite{Streetsexpent}, cf. \cite{garciafernStreets2020} Theorem 5.23) to show the long time existence it suffices to establish a uniform bound on the curvature of $\bar{g}_t$, therefore it suffices to show the type III estimates for any smooth existence interval.  Note that in Corollary \ref{c:typeIIIestimates} we already showed the required estimates on the various first order quantities.  Furthermore, using the expression for the curvature of an invariant metric in this setting (cf. \cite{GindiStreets} Theorem 2.12), it suffices to estimate $\brs{D^2 G}^2$.  To obtain this estimate we fix a large constant $A > 0$ to be determined and let
\begin{align*}
    \Phi = t^2 \left( \brs{D^2 G}^2 + \brs{D H}^2 \right) + t A \left( \brs{DG}^2 + \tr_{g}H^2 \right).
\end{align*}
Using Proposition \ref{p:curvatureev} we obtain
\begin{align*}
    \left(\dt - \gD \right) \Phi \leq&\ 2 t \left( \brs{D^2 G}^2 + \brs{D H}^2 \right) + t^2 C_1 \left( \brs{D^2 G}^3 + \brs{DH}^3 \right)\\
    &\ - t A \left( \brs{D^2 G}^2 + \brs{D H}^2 \right) + C_4 (1 + A) t^{-1}\\
    \leq&\ t \left( \brs{D^2 G}^2 +\brs{DH}^2 \right) \left( 2 - A + tC_1 \left( \brs{D^2G} + \brs{DH} \right) \right) + C_4 (1 + A) t^{-1}.
\end{align*}
We claim that for $A$ chosen sufficiently large we have
\begin{align*}
    \sup_{M \times [0,T]} \Phi \leq A^{\frac{3}{2}}
\end{align*}
for any smooth existence time $T$.  Suppose there exists a first spacetime point $(x_0, t_0)$ where $\Phi = A^{\frac{3}{2}}$.  At this point we obtain by the maximum principle, the estimates of Corollary \ref{c:typeIIIestimates}, and applications of the arithmetic-geometric mean,
\begin{align*}
    0 \leq&\ t \left( \brs{D^2 G}^2 +\brs{DH}^2 \right) \left( 2 - A + t C_1 \left( \brs{D^2G} + \brs{DH} \right) \right) + C_4 (1 + A) t^{-1}\\
    =&\ t^{-1} \left( \Phi - t A (\brs{DG}^2 + \tr_g H^2 ) \right) \left( 2 - A + C_1 \left( \Phi - t A( \brs{DG}^2 + \tr_g H^2 \right)^{\frac{1}{2}} \right) + C_4 (1 + A) t^{-1}\\
    \leq&\ t^{-1} \left( A^{\frac{3}{2}} - C A \right) \left( -\frac{A}{2} + C_5 \right) + C_4 (1 + A) t^{-1}\\
    \leq&\ t^{-1} \left( - A^{\frac{5}{2}}{4} + C_6 \right)\\
    <&\ 0,
\end{align*}
where the last line follows by choosing $A$ sufficiently large.  This is a contradiction, and so the estimate for $\brs{\bar{\Rm}}$ follows.  For the diameter estimate, we note that by the evolution equations (\ref{EQ}), we can express $g_t = e^{u_t} g_0$, where
\begin{align*}
    \frac{\del u}{\del t} = \frac{1}{2} \left( \brs{DG}^2 + \tr_g H^2 \right) \leq t^{-1}.
\end{align*}
By a straightforward integration in time, we obtain the upper bound $g_t \leq t g_0$, and the claimed diameter estimate follows.
\end{proof}
\end{thm}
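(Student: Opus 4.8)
The plan is to deduce global existence from an a priori curvature bound, and to establish the type III estimates on an arbitrary smooth existence interval, where they then propagate to $[0,\infty)$. First I would invoke the general long-time existence criterion for generalized Ricci flow (\cite{Streetsexpent}, cf. \cite{garciafernStreets2020}), by which the flow extends as long as $\brs{\bar{\Rm}} + \brs{\bar H}^2$ stays bounded; thus it suffices to prove the stated estimates while the flow is smooth. The three first-order estimates, on $\brs{[,]}^2$, $\brs{H^{\mathfrak G}}^2$, and $\brs{DG}^2 + \tr_g H^2$, are already furnished by Corollary \ref{c:typeIIIestimates}, so the remaining task is to control $\brs{\bar{\Rm}}$ and $\brs{\bar H}^2$.

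Next I would reduce the curvature bound to a bound on the single second-order quantity $\brs{D^2 G}^2$. The quantity $\brs{\bar H}^2$ decomposes into pieces controlled by $\tr_g H^2$ and $\brs{H^{\mathfrak G}}^2$, both already $O(t^{-1})$, while the curvature formula for an invariant metric in this setting (\cite{GindiStreets} Theorem 2.12) bounds $\brs{\bar{\Rm}}$ by the first-order quantities, now all of size $O(t^{-1})$, together with $\brs{D^2 G}^2$. Hence it suffices to show $\brs{D^2 G}^2 = O(t^{-2})$.

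The heart of the argument is a maximum-principle estimate for $\brs{D^2 G}^2 + \brs{DH}^2$ based on the evolution inequalities of Proposition \ref{p:curvatureev}. I would fix a large constant $A$ and study $\Phi = t^2 \left( \brs{D^2 G}^2 + \brs{DH}^2 \right) + t A \left( \brs{DG}^2 + \tr_g H^2 \right)$, computing $\left( \dt - \gD \right) \Phi$ via Proposition \ref{p:curvatureev}. The good reaction term $-\brs{D^2 G}^2 - \brs{DH}^2$ in the first-order evolution, weighted by $tA$, yields a strong damping $-tA \left( \brs{D^2 G}^2 + \brs{DH}^2 \right)$; this must absorb both the factor $2t \left( \brs{D^2 G}^2 + \brs{DH}^2 \right)$ produced by differentiating $t^2$ and, crucially, the cubic term $t^2 C_1 \left( \brs{D^2 G}^3 + \brs{DH}^3 \right)$, leading to the inequality $\left( \dt - \gD \right) \Phi \leq t \left( \brs{D^2 G}^2 + \brs{DH}^2 \right) \left( 2 - A + t C_1 \left( \brs{D^2 G} + \brs{DH} \right) \right) + C_4 (1 + A) t^{-1}$. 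I would then run a first-touching argument: at a first point $(x_0, t_0)$ where $\Phi = A^{3/2}$, which necessarily has $t_0 > 0$ since $\Phi$ vanishes at $t=0$, the definition of $\Phi$ forces $\brs{D^2 G}^2 + \brs{DH}^2 \leq A^{3/2} t_0^{-2}$, so $t_0 \left( \brs{D^2 G} + \brs{DH} \right) \leq C A^{3/4}$ and the reaction coefficient is $\leq 2 - A + C A^{3/4} \leq -A/2$ for $A$ large. Combined with $t_0 A \left( \brs{DG}^2 + \tr_g H^2 \right) \leq 2A$ from Corollary \ref{c:typeIIIestimates}, the right-hand side at $(x_0,t_0)$ becomes $\leq t_0^{-1} \left( -A^{5/2}/4 + C_6 \right) < 0$, contradicting the maximum principle. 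This gives $\sup \Phi \leq A^{3/2}$ and hence $\brs{D^2 G}^2 = O(t^{-2})$, completing the curvature estimate with a universal $C$.

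The main obstacle is precisely this balancing of the cubic nonlinearity $\brs{D^2 G}^3$ against the first-order damping: this term is of borderline order for a naive heat-equation maximum principle, and it is controlled only by inserting the lower-order quantity with a large coefficient $A$ and invoking the sharp $O(t^{-1})$ decay of Corollary \ref{c:typeIIIestimates} to render it of strictly lower order in $A$ at the touching point. Finally, for the companion diameter estimate I would write $g_t = e^{u_t} g_0$ on the one-dimensional base, where the last equation of (\ref{EQ}) gives $\dt u = \tfrac{1}{2} \left( \brs{DG}^2 + \tr_g H^2 \right) \leq t^{-1}$; integrating in time yields $g_t \leq t\, g_0$, whence $\diam(g_t) \leq \diam(g_0) t^{1/2}$.
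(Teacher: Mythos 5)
Your proposal is correct and follows essentially the same route as the paper's proof: the same reduction of $\brs{\bar{\Rm}}$ to $\brs{D^2G}^2$ via the invariant curvature formula, the same test function $\Phi = t^2\left(\brs{D^2G}^2 + \brs{DH}^2\right) + tA\left(\brs{DG}^2 + \tr_g H^2\right)$ with a first-touching argument at level $A^{3/2}$, and the same integration of $\frac{\del u}{\del t} \leq t^{-1}$ for the diameter bound. Your explicit remarks on why $t_0 > 0$ and on how the $O(t^{-1})$ decay renders the cubic term of order $A^{3/4}$ at the touching point make the paper's implicit bookkeeping precise, but introduce no new idea.
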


\section{Convergence} \label{s:conv}
\subsection{Monotone energy} \label{SECFUNC}

We will now introduce a functional that is monotone along the generalized Ricci flow equations (\ref{EQ}). We will use this in Section \ref{SECCONV} to derive rigidity results for blowdown limits. 

\begin{defn} \label{d:energy}
Let $\tau \in (0,\infty)$ and $(\bar{g}, \bar{H})$ be an invariant metric and three-form on a $\mathcal{G}$--principal bundle $P \rightarrow M$, where $M$ is one-dimensional and compact.
Define 
\begin{align*}
\mathcal{I}(\bar{g}, \bar{H}, \tau)= \tau \int_{M}\lp{|DG|^{2} + \tr_{g}H^2 +\frac{2}{\tau} }\frac{dV_{g}}{\sqrt{\tau}}.
\end{align*}
\end{defn}

It is elementary to observe that $\mathcal{I}$ is invariant under principal bundle isomorphisms and is scale invariant: $\mathcal{I}(s\bar{g}, s\bar{H}, s\tau)= \mathcal{I}(\bar{g}, \bar{H}, \tau)$, for all $s>0$.  We next show that $\mathcal I$ is monotone along generalized Ricci flow.

\begin{prop} \label{p:Imonotone}
Let $(\bar{g}(t), \bar{H}(t))$ be a time-dependent invariant metric and closed three-form satisfying (\ref{EQ}) on a $\mathcal{G}$--principal bundle $P \rightarrow M$, where $\mathcal{G}$ is a three dimensional, nilpotent Lie group and $M$ is one-dimensional and compact. Then 
\begin{align*}
 \frac{d}{dt}\mathcal{I}(\bar{g}(t), \bar{H}(t),t)= -t\int_{M}S_{B}\frac{dV_{g}}{\sqrt{t}} -\frac{t}{4}\int_{M}\lp{|DG|^{2} +\tr_{g}H^2-\frac{2}{t}}^{2}\frac{dV_{g}}{\sqrt{t}} \leq 0.
  \end{align*}
\end{prop}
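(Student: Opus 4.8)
The plan is to differentiate $\mathcal{I}(\bar g(t),\bar H(t),t)$ directly, feeding in the evolution equation for $\brs{DG}^2+\tr_g H^2$ from Proposition~\ref{PROPEVDG} together with the evolution of the Riemannian measure, and then to observe that the surviving terms reassemble into a perfect square plus the nonnegative quantity $S_B$. First I would rewrite the functional in the transparent form
\[
\mathcal{I}(\bar g,\bar H,\tau)=\sqrt{\tau}\int_M\lp{\brs{DG}^2+\tr_g H^2}\,dV_g+\frac{2}{\sqrt{\tau}}\int_M dV_g,
\]
using $\tau\cdot\tau^{-1/2}=\sqrt{\tau}$. Writing $F:=\brs{DG}^2+\tr_g H^2$ for brevity, the task is to evaluate $\tfrac{d}{dt}\big[\sqrt t\int_M F\,dV_g+2t^{-1/2}\Vol(M,g)\big]$ at $\tau=t$.

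Two inputs drive the computation. From Proposition~\ref{PROPEVDG},
\[
\dt F=\gD F-\tfrac12 F^2-S_B .
\]
For the measure, since $M$ is one-dimensional the evolution equation for $g$ in (\ref{EQ}) gives $\tr_g\dt g=\tfrac12 F$, hence $\dt\,dV_g=\tfrac14 F\,dV_g$. Differentiating term by term with the product rule, the explicit $t$-derivatives contribute $\tfrac12 t^{-1/2}\int_M F\,dV_g$ and $-t^{-3/2}\Vol(M,g)$; the variation of $F$ contributes $\sqrt t\int_M(\gD F-\tfrac12 F^2-S_B)\,dV_g$, whose Laplacian term integrates to zero because $M$ is closed; and the two measure variations add $\tfrac{\sqrt t}{4}\int_M F^2\,dV_g$ and $\tfrac12 t^{-1/2}\int_M F\,dV_g$.

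Collecting these terms yields
\[
\frac{d}{dt}\mathcal{I}=t^{-1/2}\int_M F\,dV_g-\frac{\sqrt t}{4}\int_M F^2\,dV_g-\sqrt t\int_M S_B\,dV_g-t^{-3/2}\Vol(M,g).
\]
The final step is to complete the square: expanding $\tfrac{\sqrt t}{4}\int_M(F-\tfrac2t)^2\,dV_g=\tfrac{\sqrt t}{4}\int_M F^2\,dV_g-t^{-1/2}\int_M F\,dV_g+t^{-3/2}\Vol(M,g)$ shows that the three integrals not involving $S_B$ above equal precisely $-\tfrac{\sqrt t}{4}\int_M(F-\tfrac2t)^2\,dV_g$, which is the claimed identity once one rewrites $\sqrt t=t\cdot t^{-1/2}$. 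The constant $2/\tau$ in the definition of $\mathcal I$ and the coefficient $\tfrac14$ coming from the measure are exactly what make the cross terms cancel into this perfect square.

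It remains to deduce $\tfrac{d}{dt}\mathcal I\le 0$, which requires $S_B\ge 0$. The squared integrand is manifestly nonnegative, and every summand of $S_B$ in Proposition~\ref{PROPEVDG} is either a norm-square or the term $2\brs{[,]}^2\lp{D_vG(\cdot,\eta_3)D_vG(\cdot,\eta_3)-\brs{DG}^2_{Z(\mathfrak{G})}}$, which is nonnegative because the full $\mathfrak{G}$-trace $D_vG(\cdot,\eta_3)D_vG(\cdot,\eta_3)=\sum_i D_vG(\eta_i,\eta_3)^2$ dominates its restriction $D_vG(\eta_3,\eta_3)^2$ to the one-dimensional center $Z(\mathfrak{G})$. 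I expect no real obstacle here: the computation is mechanical once the evolutions of $F$ and $dV_g$ are in place, and the only delicate point is the bookkeeping that produces the exact square $(F-\tfrac2t)^2$.
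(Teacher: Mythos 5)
Your proposal is correct and follows essentially the same route as the paper: differentiate $\mathcal{I}$ using the evolution equation of Proposition \ref{PROPEVDG} together with $\dt\, dV_g = \tfrac14\lp{\brs{DG}^2+\tr_g H^2}dV_g$, integrate away the Laplacian on the closed base, and complete the square in $\brs{DG}^2+\tr_g H^2-\tfrac{2}{t}$; the paper merely organizes the bookkeeping via the weighted measure $dV_g/\sqrt{t}$ rather than splitting off powers of $\sqrt{t}$ as you do. Your explicit verification that the one non-manifestly-square summand of $S_B$ is nonnegative (the full trace $\sum_i D_vG(\eta_i,\eta_3)^2$ dominating its restriction to the rank-one center) is a correct detail the paper leaves implicit.
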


\begin{proof}
Setting $Q:=|DG|^{2} + \tr_{g}H^2$, first note  by (\ref{EQ}), 

 \begin{align*}
\frac{d}{dt} \frac{dV_{g}}{\sqrt{t}}
 = \lp{\frac{1}{2}\frac{\partial g}{\partial t}(\cdot,\cdot) -\frac{1}{2t}}\frac{dV_{g}}{\sqrt{t}}
 =\lp{\frac{1}{4}Q-\frac{1}{2t}}\frac{dV_{g}}{\sqrt{t}}.
\end{align*}
Using this together with Proposition \ref{PROPEVDG}, yields
\begin{align*}
\frac{d}{dt} \lp{t\int_{M}Q\frac{dV_{g}}{\sqrt{t}}}
       & =\int_{M} \lp{Q +t \frac{dQ}{dt}} \frac{dV_{g}}{\sqrt{t}} +t\int_{M} Q \frac{d}{dt}\lp{  \frac{dV_{g}}{\sqrt{t}} }
       \\&= \int_{M} \left( -t S_{B} + Q -\frac{t}{2} Q^{2}  +t Q\lp{ \frac{1}{4}Q -\frac{1}{2t}}  \right) \frac{dV_{g}}{\sqrt{t}}
  \\& =  -t\int_{M} S_{B}\frac{dV_{g}}{\sqrt{t}} -t\int_{M}Q \left(\frac{1}{4}Q -\frac{1}{2t} \right) \frac{dV_{g}}{\sqrt{t}}.
      \end{align*}
      
     Hence

\begin{align*}
 \frac{d}{dt}\mathcal{I}(\bar{g}(t), \bar{H}(t),t)&=
\frac{d}{dt} \lp{\int_{M} \lp{t Q  + 2} \frac{dV_{g}}{\sqrt{t}}}
   \\&=  -t\int_{M} S_{B}\frac{dV_{g}}{\sqrt{t}} 
    -\frac{t}{4}\int_{M} \left( Q^{2} -\frac{2}{t}Q \right) \frac{dV_{g}}{\sqrt{t}} 
    +2\int_{M}\left(\frac{1}{4}Q -\frac{1}{2t} \right)  \frac{dV_{g}}{\sqrt{t}}
    \\&=  -t\int_{M} S_{B}\frac{dV_{g}}{\sqrt{t}} 
         -\frac{t}{4}\int_{M} \lp{Q-\frac{2}{t}}^{2} \frac{dV_{g}}{\sqrt{t}}.
        \end{align*}
   \end{proof}

With this monotonicity in place, it is natural to understand the case when $\mathcal I$ is fixed.  The next proposition shows that such metrics are quite rigid.

\begin{prop} \label{PROPSB1}Let $(\bar{g}, \bar{H})$ be an invariant metric and closed three-form on a $\mathcal{G}$--principal bundle $P \rightarrow M$, where $\mathcal{G}$ is a three dimensional, nilpotent Lie group and $M$ is one-dimensional and compact. If $S_{B}=0$ then the following holds true. 
   \begin{enumerate}
  \item[1)]  Using the $G_{t}$--orthogonal  splitting $\mathfrak{G}= Z(\mathfrak{G})^{\perp} \oplus Z(\mathfrak{G}),$ 
   $D= D \oplus D$  and $DG= DG^{(1)} \oplus DG^{(2)}$
 \item[2)] $DG(\cdot,\cdot) =0$
   \item[3)] $\tr_{g}H^2=0$
 \item[4)] $(D_{\cdot}DG)_{\cdot}(\eta,\eta')=D_{\cdot_{1}}G(\eta,\cdot_{2})D_{\cdot_{1}}G(\eta',\cdot_{2}),$ for all  $\eta, \eta' \in \mathfrak{G}$ 
 \item [5)] $D|[,]|^{2}=0$, $D|DG|^{2}=0$ and $D|H^{\mathfrak{G}}|^{2}=0.$
\end{enumerate}
 In the case when $\mathcal{G}$ is nonabelian, we also have:
  \begin{enumerate}
   \item[6)] $DG|_{Z(\mathfrak{G})}=0$
  \item[7)] $(D_{\cdot}DG)_{\cdot}(\eta,\eta')=\frac{1}{2}|DG|^{2}G(\eta,\eta'),$ for all  $\eta, \eta' \in Z(\mathfrak{G})^{\perp}$. 
  \end{enumerate}
   \end{prop}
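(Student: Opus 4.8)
The plan is to exploit that, once one subtle term is understood, the quantity $S_B$ of Proposition \ref{PROPEVDG} is a sum of nonnegative pieces, so that the hypothesis $S_B = 0$ forces each piece to vanish; the conclusions are then read off by combining these vanishings with Proposition \ref{propGRAD} and the gradient identities (\ref{EQDBRA}), (\ref{EQDH12}), (\ref{EQDDG}). The only summand that is not manifestly a square is $2\lo{[,]}^{2}\lp{D_{v}G(\cdot,\eta_{3})D_{v}G(\cdot,\eta_{3}) - \lo{DG}^{2}_{Z(\mathfrak{G})}}$, so I would first note that, since $\eta_{3}$ spans $Z(\mathfrak{G})$, the difference equals $D_{v}G(\eta_{1},\eta_{3})^{2} + D_{v}G(\eta_{2},\eta_{3})^{2} \geq 0$. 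Thus $S_{B}=0$ yields, piece by piece: the vanishing of the first and second squares, the relations $D_{v}G(\cdot,\cdot) = 2 D_{v}G(\eta_{3},\eta_{3})$ and $D_{v}G(\eta_{1},\eta_{3}) = D_{v}G(\eta_{2},\eta_{3}) = 0$ (nonabelian case), $DG([\cdot,*],\cdot)=0$, $H^{2}(v,\pi_{\mathfrak{G}}*)=0$, and $H(v,\eta_{1},\eta_{2})=0$.

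The crux — and the step I expect to be the main obstacle — is extracting the global conclusions (2) and (3) from the vanishing of the first square. Tracing that identity over a $G$-orthonormal frame $\{\eta_{i}\}$ gives
\begin{align*}
\tr_{G}(D_{v}D_{v}G) = \lo{DG}^{2} - 2\tr_{g}H^{2},
\end{align*}
while writing $h = \det G$ as in Proposition \ref{propGRAD} and differentiating $\ln h$ twice along the unit field $v$ (in a $D$-parallel, $G$-orthonormal frame at the point) gives the log-determinant identity $\tr_{G}(D_{v}D_{v}G) = D_{v}(DG(\cdot,\cdot)) + \lo{DG}^{2}$. Combining these with $DG(\cdot,\cdot) = d\ln h$ produces the pointwise inequality
\begin{align*}
D_{v}D_{v}(\ln h) = -2\tr_{g}H^{2} \leq 0.
\end{align*}
Integrating over the closed one-dimensional base $M$, the left side is a total derivative and integrates to zero, so $\tr_{g}H^{2}\equiv 0$, which is conclusion (3); then $D_{v}D_{v}(\ln h)\equiv 0$, and periodicity of $\ln h$ forces $d\ln h \equiv 0$, i.e. $DG(\cdot,\cdot)=0$, conclusion (2). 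This is the one point where compactness of $M$ is essential, and identifying the correct trace together with the log-determinant identity is the delicate part.

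With (2) and (3) in hand the rest is routine. From $\tr_{g}H^{2} = \lo{i_{v}H}^{2} = 0$ all mixed components $H(v,\eta,\eta')$ vanish, so the $H$-terms drop out of the first square and conclusion (4), $(D_{\cdot}DG)_{\cdot}(\eta,\eta') = D_{\cdot_{1}}G(\eta,\cdot_{2})D_{\cdot_{1}}G(\eta',\cdot_{2})$, is exactly what remains. For conclusion (5): substituting (4) into (\ref{EQDDG}) gives $D\lo{DG}^{2}=0$; feeding $DG(\cdot,\cdot)=0$ together with $D_{v}G(\cdot,\cdot) = 2D_{v}G(\eta_{3},\eta_{3})$ (which then forces $D_{v}G(\eta_{3},\eta_{3})=0$) into (\ref{EQDBRA}) gives $D\lo{[,]}^{2}=0$; and $DG(\cdot,\cdot)=0$ in (\ref{EQDH12}) gives $D\lo{H^{\mathfrak{G}}}^{2}=0$.

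Finally, in the nonabelian case the vanishings $D_{v}G(\eta_{1},\eta_{3}) = D_{v}G(\eta_{2},\eta_{3}) = D_{v}G(\eta_{3},\eta_{3}) = 0$ show that $DG$ is block-diagonal and $D$ preserves the splitting $\mathfrak{G} = Z(\mathfrak{G})^{\perp}\oplus Z(\mathfrak{G})$ — using that $D$ preserves $Z(\mathfrak{G})$ by Proposition \ref{PROPZ} and that the mixed components of $DG$ vanish — which is conclusions (1) and (6). For conclusion (7), I would restrict (4) to $Z(\mathfrak{G})^{\perp}$: the endomorphism $S := DG|_{Z(\mathfrak{G})^{\perp}}$ of the two-dimensional space $Z(\mathfrak{G})^{\perp}$ is symmetric and trace-free (as $DG(\cdot,\cdot)=0$ and the $Z(\mathfrak{G})$-component vanishes), and (4) reads $(D_{\cdot}DG)_{\cdot} = S^{2}$; Cayley--Hamilton for a trace-free symmetric $2\times 2$ matrix gives $S^{2} = \tfrac{1}{2}\lo{S}^{2}\Id = \tfrac{1}{2}\lo{DG}^{2}\Id$, which is precisely conclusion (7).
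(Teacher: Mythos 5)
Your proposal is correct and is essentially the paper's own argument: the paper likewise splits $S_B=0$ into the vanishing of each (nonnegative) summand, traces the first identity and combines it with Proposition \ref{propGRAD} and compactness of $M$ to obtain $\tr_g H^2=0$ and $DG(\cdot,\cdot)=0$ (via $d^*\lp{DG(\cdot,\cdot)}=0$ and harmonicity of $h$, which is equivalent to your total-derivative/periodicity argument on the circle), and then reads off (1) and (4)--(7) exactly as you do, with your Cayley--Hamilton step for (7) matching the paper's direct computation in an orthonormal frame for $Z(\mathfrak{G})^{\perp}$ using $DG(\eta_1,\eta_1)+DG(\eta_2,\eta_2)=0$. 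The only quibble is a norm convention: with the paper's conventions the traced identity reads $D_vD_v\ln h = -\tr_g H^2$ rather than $-2\tr_g H^2$, a factor that affects nothing in the argument.
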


   \begin{proof} 
     We will be using the following direct consequences of the condition $S_{B}=0$: 
     \begin{align}
      & (D_{\cdot}DG)_{\cdot}(*_{1},*_{2})-D_{\cdot_{1}}G(*_{1},\cdot_{2})D_{\cdot_{1}}G(*_{2},\cdot_{2}) +H(v,*_{1},\cdot)H(v,*_{2},\cdot)=0 \label{EQSB1}
      \\& |[,]|^{2}(D_{v}G(\cdot,\cdot)-2D_{v}G(\eta_{3},\eta_{3}))^{2}=0 \label{EQSB2}
     \\& |[,]|^{2}(D_{v}G(\cdot,\eta_{3}) D_{v}G(\cdot,\eta_{3}) - |DG|_{Z(\mathfrak{G})}^{2}  )=0. \label{EQSB3}
     \end{align}

   To prove Parts 2), 3) and 4), by (\ref{EQSB1}), 
          \begin{align}
     0 &= (D_{v}DG)_{v}(\cdot,\cdot)- |DG|^{2} +H(v, \cdot_{1},\cdot_{2})H(v,\cdot_{1},\cdot_{2})  \nonumber
          \\& =-d^{*}(DG(\cdot,\cdot)) + \tr_{g}H^2 \label{EQSB4}
           \end{align}
        and hence $\int_{M}\tr_{g}H^2 dV_{g}=0$, which implies $\tr_{g}H^2=0.$ Plugging this back into (\ref{EQSB4}), gives
           \[d^{*}(DG(\cdot,\cdot))=0\]
         and using Proposition \ref{propGRAD}, which states that $DG(\cdot,\cdot)=dh$ for some function $h$ on $M$, shows that $d^{*}dh=0$ and hence $dh=DG(\cdot,\cdot)=0$. Moreover, using $\tr_{g}H^2=0$, (\ref{EQSB1}) becomes  \begin{align} \label{EQSB9}(D_{\cdot}DG)_{\cdot}(*_{1},*_{2})=D_{\cdot_{1}}G(*_{1},\cdot_{2})D_{\cdot_{1}}G(*_{2},\cdot_{2}).\end{align}
         
        Next, we have
         \begin{align}
         & D_{v}|DG|^{2}= 2\lp{D_{v}D_{v}G(\cdot_{1},\cdot_{2})- D_{v}G(\cdot_{1},\cdot_{3})D_{v}G(\cdot_{2},\cdot_{3}) }D_{v}G(\cdot_{1},\cdot_{2}) =0 \label{EQSB6}
\\& D|H^{\mathfrak{G}}|^{2}= - |H^{\mathfrak{G}}|^{2}DG(\cdot,\cdot)=0.  \label{EQSB7}
 \end{align}
 (\ref{EQSB6}) follows from (\ref{EQDDG}) and (\ref{EQSB9}); (\ref{EQSB7})  uses $DG(\cdot,\cdot)=0$ and (\ref{EQDH12}), which in turn uses Proposition \ref{PROPdH} and $dH=0$.
         
       We now assume that $\mathcal{G}$ is nonabelian.  
         First, plugging $DG(\cdot,\cdot)=0$ into (\ref{EQSB2}) yields $DG|_{Z(\mathfrak{G})}=0$. Next, by (\ref{EQSB3}), we obtain $DG(\eta_{1},\eta_{3})=0$ and $DG(\eta_{2},\eta_{3})=0$, where $\{\eta_{i}\}_{i=1,2,3}$ is an orthonormal basis for $(\mathfrak{G}|_{m},G|_{m})$ such that $\eta_{3} \in Z(\mathfrak{G})$ and $m \in M$.  Hence $DG(\eta,\eta')=0$, for all $\eta \in Z(\mathfrak{G})^{\perp}$ and $\eta' \in Z(\mathfrak{G}).$ 
     
     We will now use this  to prove $D= D \oplus D$, based on the $G_{t}-$orthogonal splitting $\mathfrak{G}=Z(\mathfrak{G})^{\perp} \oplus Z(\mathfrak{G})$. First note by Proposition \ref{PROPZ}, $D$ preserves sections of $Z(\mathfrak{G})$. Now let $\eta$ and $\eta'$ be sections of $Z(\mathfrak{G})^{\perp}$ and  $Z(\mathfrak{G})$, respectively, and consider 
    \begin{align*}
    0&= D(G(\eta,\eta') ) \\ &= DG(\eta, \eta') +G(D\eta,\eta') +G(\eta,D\eta') \\ & =G(D\eta,\eta'),
    \end{align*}
    where we used that $DG(\eta, \eta')=0$ and $G(\eta,D\eta')=0$. Hence $D\eta$ is a section of $Z(\mathfrak{G})^{\perp}$.
    
 To prove Part 7), using (\ref{EQSB9}) we have
  \begin{align*}
   (D_{\cdot}DG)_{\cdot}(\eta,\eta')=D_{\cdot_{1}}G(\eta,\cdot_{2})D_{\cdot_{1}}G(\eta',\cdot_{2})=  \frac{1}{2}|DG|^{2}_{Z(\mathfrak{G})^{\perp}}G(\eta,\eta'),
  \end{align*}
  for all $\eta, \eta' \in Z(\mathfrak{G})^{\perp}$.  One may check the second equality using an orthonormal basis $\{ \eta_{1},\eta_{2}\}$ for $(Z(\mathfrak{G})^{\perp}|_{m}, G|_{m})$, where $m \in M$, together with the relations $DG(\eta_{1},\eta_{1})+ DG(\eta_{2},\eta_{2})=0$ and $DG(\eta,\eta'')=0,$ for all $\eta \in Z(\mathfrak{G})^{\perp} $ and $\eta'' \in Z(\mathfrak{G})$. (The equality $DG(\eta_{1},\eta_{1})+ DG(\eta_{2},\eta_{2})=0$ follows from $DG(\cdot,\cdot)=0$ and $DG|_{Z(\mathfrak{G})}=0$.)
  
Lastly, we have
  \begin{align*}
  D|[,]|^{2}=- |[,]|^{2}(DG(\cdot,\cdot)-2DG(\eta_{3},\eta_{3})) =0,
  \end{align*}
 which follows from (\ref{EQDBRA}) and  (\ref{EQSB2}).
\end{proof}
 
 \begin{prop} \label{PROPSB2} Let $(\bar{g}(t), \bar{H}(t))$ be a time-dependent invariant metric and closed three form satisfying (\ref{EQ}) on a $\mathcal{G}$--principal bundle $P \rightarrow M$, where $t\in (0,\infty)$, $\mathcal{G}$ is a three dimensional, nilpotent Lie group and $M$ is one-dimensional and compact. If $S_{B}=0$ then in addition to the conclusions of Proposition \ref{PROPSB1}, the following holds true:
   \begin{align*} 
   &1) \ Z(\mathfrak{G})^{\perp} \text{ is independent of } t
 \\&2) \ \frac{\del A}{\del t}=0 \text{ and } \frac{dD}{dt}=0.
   \end{align*}
\end{prop}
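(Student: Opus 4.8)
The plan is to first observe that the hypotheses force $\bar H$ to be purely vertical, and then treat the two assertions in a convenient order. Since Proposition \ref{PROPSB1} gives $\tr_g H^2 = 0$ and $\dim M = 1$, writing $v$ for a unit vector in $TM$ we have $\tr_g H^2 = H^2(v,v) = |i_v H|^2 = 0$, so $i_v H = 0$; equivalently $H(v,\cdot,\cdot)=0$ and $H = H^{\mathfrak G}$. Because $\mathfrak G$ is three-dimensional, $H^{\mathfrak G}$ is a multiple of the $G$-volume form, so in any orthonormal frame $\{\eta_i\}$ the restriction of $H^2$ to $\mathfrak G$ is a multiple of $G$; in particular $H^2(\eta,\eta_3)=0$ whenever $\eta\perp\eta_3$, and $H^2(v,\eta)=\langle i_v H, i_\eta H\rangle = 0$ for all $\eta$. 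These vanishings, together with the conclusions of Proposition \ref{PROPSB1}, are what drive everything.

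For Part 2 I would read off $\frac{\partial A}{\partial t}$ from its evolution equation in (\ref{EQ}), namely $G(\frac{\partial A}{\partial t}v,\eta) = \tr_G D_v G([\cdot,\eta],\cdot) + \frac12 H^2(v,\eta)$. The second term vanishes by the remarks above. For the first, I use the Heisenberg structure: every bracket $[\eta_i,\eta]$ lies in $Z(\mathfrak G)=\mathrm{span}(\eta_3)$, so $\tr_G D_v G([\cdot,\eta],\cdot)=\sum_i D_v G([\eta_i,\eta],\eta_i)$ is a sum of terms of the form $D_v G(\eta_3,\eta_i)$; each vanishes because Proposition \ref{PROPSB1} gives $DG|_{Z(\mathfrak G)}=0$ and $DG(\eta,\eta')=0$ for $\eta\in Z(\mathfrak G)^\perp$, $\eta'\in Z(\mathfrak G)$. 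Hence $\frac{\partial A}{\partial t}=0$. The vanishing of $\frac{dD}{dt}$ is then immediate from the identity $\frac{d}{dt}(D_w\eta) = -[\frac{\partial A}{\partial t}w,\eta]$ recorded in the proof of Proposition \ref{PROPDG1}.

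For Part 1 I would compute the mixed component $\frac{\partial G}{\partial t}(\eta,\eta_3)$ from the $G$-evolution equation in (\ref{EQ}) for $\eta\in Z(\mathfrak G)^\perp$, checking that every term dies: the term $\tr_G G([\cdot,\eta],[\cdot,\eta_3])$ vanishes since $\eta_3$ is central; the term $\tr_G G([\cdot_1,\cdot_2],\eta)G([\cdot_1,\cdot_2],\eta_3)$ vanishes since brackets lie in $Z(\mathfrak G)$ and $\eta\perp\eta_3$; the quadratic and second-order $DG$-terms reduce, using Part 4 of Proposition \ref{PROPSB1}, to contractions of $D_v G(\cdot,\eta_3)$, which vanish as above; and the $H^2$ term vanishes by the first paragraph. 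Thus $\frac{\partial G}{\partial t}(\eta,\eta_3)=0$ on $Z(\mathfrak G)^\perp$, and by bilinearity $\frac{\partial G}{\partial t}(\eta,\eta_3)=c(t)\,G(\eta,\eta_3)$ for all $\eta$, where $c(t)=\frac{\partial G}{\partial t}(\eta_3,\eta_3)$.

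To finish I would fix a $t$-independent local section $\zeta$ of the (fixed) line bundle $Z(\mathfrak G)$ and consider the family of linear functionals $\ell_t:=G_t(\cdot,\zeta)$, whose kernel is exactly $Z(\mathfrak G)^\perp(t)$. Writing $\zeta$ as a $G_t$-multiple of $\eta_3$ and using the previous identity gives $\frac{d}{dt}\ell_t = c(t)\,\ell_t$, a scalar linear ODE; hence $\ell_t$ is a positive multiple of $\ell_{t_0}$ and $\ker\ell_t$ is independent of $t$, which is the claim. The main obstacle is precisely this last step: since $Z(\mathfrak G)^\perp$ is defined through the $t$-dependent metric $G_t$, the pointwise-in-time computation $\frac{\partial G}{\partial t}(\eta^\perp,\eta_3)=0$ does not by itself yield invariance of the subbundle, and one must notice that $\frac{\partial G}{\partial t}(\cdot,\eta_3)$ is proportional to $G(\cdot,\eta_3)$ in order to convert the computation into the scalar ODE above.
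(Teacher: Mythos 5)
Your proposal is correct and takes essentially the same approach as the paper: Part 2 is the paper's argument verbatim, and for Part 1 the paper likewise reduces everything to the vanishing $\frac{\partial G}{\partial t}(\eta,\eta')=0$ (using Proposition \ref{PROPSB1} together with the verticality of $H$) and then shows the ratio $G_t(\eta_0,\eta')/G_t(\eta',\eta')$ is constant in $t$ --- which is precisely the integrated form of your scalar ODE $\frac{d}{dt}\ell_t=c(t)\,\ell_t$ for the annihilator functional. You correctly identified the one genuine subtlety (that pointwise vanishing of the mixed component does not by itself fix the $t$-dependent subbundle $Z(\mathfrak{G})^\perp$), and your repackaging via the proportionality $\frac{\partial G}{\partial t}(\cdot,\zeta)=c(t)\,G(\cdot,\zeta)$ is a clean equivalent of the paper's projection-coefficient computation.
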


\begin{proof}

To prove Part 1), let $\eta_{t}$ be a $t$--dependent element of  $\mathfrak{G}$ and $\eta' \in Z(\mathfrak{G})$ be such that $G_{t}(\eta_{t},\eta')=0$, for all $t \in (0,\infty)$. We will first show $\frac{\partial G}{\partial t}(\eta,\eta')= 0$. Using (\ref{EQ}), we have
\begin{align*}
 &\frac{\partial G}{\partial t}(\eta,\eta')
 \\ &=(D_{\cdot}DG)_{\cdot}(\eta,\eta')- D_{\cdot}G(\eta, \cdot_{1})D_{\cdot}G(\eta',\cdot_{1})
 \\ &  +G([\cdot,\eta],[\cdot,\eta']) -\frac{1}{2}G([\cdot_{1},\cdot_{2}],\eta)G([\cdot_{1},\cdot_{2}],\eta') +\frac{1}{2}H^2(\eta,\eta').
\end{align*}
To analyze these terms, consider the $t$-dependent, $G$-orthogonal splitting $\mathfrak{G}= Z(\mathfrak{G})^{\perp} \oplus Z(\mathfrak{G})$ and set $G= G^{(1)} \oplus G^{(2)}.$ Since $D= D \oplus D$ and $DG=DG^{(1)} \oplus DG^{(2)}$ by Proposition \ref{PROPSB1}, the first two terms in the above expression for $\frac{\partial G}{\partial t}(\eta,\eta')$ are zero.  The third term $G([\cdot,\eta],[\cdot,\eta'])=0$ because $\eta' \in Z(\mathfrak{G})$ and the fourth term $-\frac{1}{2}G([\cdot_{1},\cdot_{2}],\eta)G([\cdot_{1},\cdot_{2}],\eta')=0$ because $[\mathfrak{G},\mathfrak{G}] \subset Z(\mathfrak{G})$ and $\eta \in Z(\mathfrak{G})^{\perp}$.

To show the fifth term, $\frac{1}{2}H^2(\eta,\eta')$, is zero,  let $v \in T_{m}M$ satisfy $g(v,v)=1$,  for some $m \in M$, and let $\{\eta_{i}\}_{i=1,2,3}$ be an orthonormal basis for $(\mathfrak{G}|_{m}, G|_{m})$. Using that $\tr_{g}H^{2}=0$ by Proposition \ref{PROPSB1},  and that $\mathfrak{G}|_{m}$ and $M$ are, respectively, three and one dimensional, we have
\begin{align*}
\frac{1}{2}H^2(\eta,\eta')
&= \frac{1}{2}H(\eta, \eta_{i},\eta_{j})H(\eta', \eta_{i},\eta_{j})
  +H(\eta, v, \eta_{i})H(\eta', v, \eta_{i}) =0.
\end{align*}
Hence $\frac{\partial G}{\partial t}(\eta,\eta')= 0$. 
  
  Now to prove that $Z(\mathfrak{G})^{\perp}$ is independent of $t$, let $\epsilon_{0}>0$ and $\eta_{0} \in \mathfrak{G}$ satisfy $G|_{t=\epsilon_{0}}(\eta_{0},\eta')=0,$ for all $\eta' \in Z(\mathfrak{G})$. Given a nonzero element $\eta' \in Z(\mathfrak{G})$, set $\eta=\eta_{0} -\frac{G(\eta_{0},\eta')}{G(\eta',\eta')}\eta'$, so that $G(\eta,\eta')=0$ for all $t>0$. Since $\frac{\partial G}{\partial t}(\eta,\eta')= 0$, we have
  \begin{align*}
  0=\frac{d}{dt}G(\eta,\eta')= G\lp{\frac{d\eta}{dt},\eta'}=-G(\eta',\eta')\frac{d}{dt}\lp{\frac{G(\eta_{0},\eta')}{G(\eta',\eta')}},
  \end{align*}
 which implies \[\frac{G_{t}(\eta_{0},\eta')}{G_{t}(\eta',\eta')}= \frac{G_{t=\epsilon_{0}}(\eta_{0},\eta')}{G_{t=\epsilon_{0}}(\eta',\eta')}=0,\] for all $t>0$.  Hence $G(\eta_{0},\eta')=0$ for all $\eta' \in Z(\mathfrak{G}) $  and $t>0$. This completes the proof of Part 1). 
 
 To prove Part 2), let $v\in T_{m}M$, $\eta \in \mathfrak{G}|_{m}$, for some $m \in M$, and  let $\{\eta_{i}\}_{i=1,2,3}$ be an orthonormal basis for $(\mathfrak{G}|_{m}, G|_{m})$ such that 
$\eta_{3} \in Z(\mathfrak{G})$. By (\ref{EQ}),
\begin{align*}
 G\lp{\frac{\del A}{\del t}v, \eta} &= D_{v}G([\cdot,\eta],\cdot) + \frac{1}{2}H^2(v,\eta).
		       \end{align*}
The term $D_{v}G([\cdot,\eta],\cdot)= D_{v}G([\eta_{3},\eta],\eta_{3})=0$ because $\eta_{3} \in Z(\mathfrak{G})$, $[\mathfrak{G},\mathfrak{G}] \subset Z(\mathfrak{G})$ and by Proposition \ref{PROPSB1}, $DG(\eta'',\eta')=0$, for all $\eta'' \in Z(\mathfrak{G})^{\perp}$ and $\eta' \in Z(\mathfrak{G})$. Using that $\tr_{g}H^2=0$, which is true by Proposition \ref{PROPSB1},  the term $\frac{1}{2}H^2(v,\eta)=\frac{1}{2}H(v,\eta_{i},\eta_{j})H(\eta,\eta_{i},\eta_{j})=0 $.  This then shows that $\frac{\del A}{\del t}=0.$

Letting $v\in TM$ and $\eta \in  \Gamma(\mathfrak{G})$, we then have \[ \frac{d}{dt}\lp{D_{v}\eta}=-\lb{\frac{\partial A}{\partial t}v, \eta}=0,\]  which proves that $\frac{dD}{dt}=0.$
     \end{proof}

\subsection{Convergence Results}\label{SECCONV}

 We will now use the $\mathcal{I}$--functional of Section \ref{SECFUNC} to study blowdown limits of invariant generalized Ricci flows.
 
\begin{thm}  \label{THMLIMIT}
 Let $(\bar{g}(t), \bar{H}(t))$ be a time-dependent invariant metric and closed three-form satisfying (\ref{EQ}) on a $\mathcal{G}$--principal bundle $P \rightarrow M$, where $t\in (0,\infty)$, $\mathcal{G}$ is a three dimensional,  nilpotent Lie group and $M$ is one-dimensional and compact. Define $\bar{g}_{k}(t)=s_{k}^{-1}\bar{g}(s_{k}t)$ and $\bar{H}_{k}(t)=s_{k}^{-1}\bar{H}(s_{k}t)$, where $\{s_{k}\}$ is a sequence of positive numbers such that $\lim s_{k}=\infty$. We suppose there exists a $\mathcal{G}$--principal bundle $P_{\infty} \rightarrow M_{\infty}$, over a compact and one-dimensional base manifold, together with bundle isomorphisms $\psi_{k}:P_{\infty} \rightarrow P$  such that $(\psi_{k}^{*}(\bar{g}_{k})(t),\psi_{k}^{*}(\bar{H}_{k})(t))$ uniformly converges to $(\bar{g}_{\infty}(t),\bar{H}_{\infty}(t))$ on $P_{\infty} \times [j^{-1},j]$, for all $j>0$. The following holds true in the limit: 
\begin{enumerate} 
 \item[1)] $\mathfrak{G}_{0}:=Z(\mathfrak{G})^{\perp}$ is independent of  $t$
 \item[2)] under the splitting  $\mathfrak{G}=\mathfrak{G}_{0} \oplus Z(\mathfrak{G})$,  $ D= D \oplus D$ and is independent of  $t$  
  \item[3)]  $\frac{\partial A}{\partial t}=0$ 
 \item[4)] $H= H^{\mathfrak{G}}$ and $\frac{\del H}{\del t}=0$
  \item[5)] $DG(\cdot,\cdot) =0$
\item[6)] $(D_{\cdot}DG)_{\cdot}(\eta,\eta')=D_{\cdot_{1}}G(\eta,\cdot_{2})D_{\cdot_{1}}G(\eta',\cdot_{2})$, for all $\eta, \eta' \in \mathfrak{G}$ 
 \item[7)] $D|[,]|^{2}=0$ and  $D|H^{\mathfrak{G}}|^{2}=0$
\item[8)] $|DG|^{2}=\frac{2}{t}$ and   $g= tg|_{t=1}$.
  \end{enumerate}
  In the case when $\mathcal{G}$ is nonabelian, we also have:
  \begin{enumerate}
  \item[9)] $DG|_{Z(\mathfrak{G})}=0$ 
     \item[10)] $(D_{\cdot}DG)_{\cdot}(\eta,\eta')=\frac{G(\eta,\eta')}{t}, \text{ for all } \eta, \eta' \in \mathfrak{G}_{0}$. 
  \end{enumerate}
  
\end{thm}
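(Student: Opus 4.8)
The plan is to show the blowdown limit is a critical point of the monotone energy $\mathcal{I}$, and then to read off all ten conclusions from the rigidity already established in Propositions \ref{PROPSB1} and \ref{PROPSB2}. First I would note that the integrand of $\mathcal{I}$ is nonnegative, so $\mathcal{I}(\bar{g}(t),\bar{H}(t),t) \geq 0$; combined with the monotonicity of Proposition \ref{p:Imonotone} this shows that $t \mapsto \mathcal{I}(\bar{g}(t),\bar{H}(t),t)$ is nonincreasing and bounded below, hence converges to some $\mathcal{I}_{\infty} \geq 0$ as $t \to \infty$.

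Next I would transfer this to the limit flow using invariance. From the scale invariance $\mathcal{I}(s\bar{g},s\bar{H},s\tau) = \mathcal{I}(\bar{g},\bar{H},\tau)$ applied with $s = s_{k}^{-1}$ and $\tau = s_{k}t$, one obtains
\[
\mathcal{I}(\bar{g}_{k}(t),\bar{H}_{k}(t),t) = \mathcal{I}(\bar{g}(s_{k}t),\bar{H}(s_{k}t),s_{k}t),
\]
whose right-hand side tends to $\mathcal{I}_{\infty}$ as $k \to \infty$ since $s_{k}t \to \infty$. Because $\mathcal{I}$ is invariant under the bundle isomorphisms $\psi_{k}$ and continuous with respect to the convergence of the rescaled flows, the left-hand side converges to $\mathcal{I}(\bar{g}_{\infty}(t),\bar{H}_{\infty}(t),t)$. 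Hence $\mathcal{I}(\bar{g}_{\infty}(t),\bar{H}_{\infty}(t),t) = \mathcal{I}_{\infty}$ for every $t \in (0,\infty)$, so $\mathcal{I}$ is constant along the limit flow.

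Because the limit again solves (\ref{EQ}), I would apply Proposition \ref{p:Imonotone} to it. Writing $Q := \brs{DG}^{2} + \tr_{g}H^{2}$, constancy of $\mathcal{I}$ forces
\[
0 = -t\int_{M} S_{B}\,\frac{dV_{g}}{\sqrt{t}} - \frac{t}{4}\int_{M}\lp{Q - \tfrac{2}{t}}^{2}\frac{dV_{g}}{\sqrt{t}}.
\]
Both terms are nonpositive: the inequality $S_{B} \geq 0$ is read off from its sum-of-squares form in Proposition \ref{PROPEVDG}, where the one non-manifestly-positive term $2\brs{[,]}^{2}\lp{D_{v}G(\cdot,\eta_{3})D_{v}G(\cdot,\eta_{3}) - \brs{DG}^{2}_{Z(\mathfrak{G})}}$ equals $2\brs{[,]}^{2}\lp{D_{v}G(\eta_{1},\eta_{3})^{2} + D_{v}G(\eta_{2},\eta_{3})^{2}} \geq 0$. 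Thus each term vanishes, giving $S_{B} = 0$ and $Q = \tfrac{2}{t}$ pointwise, for all $t$. With $S_{B} \equiv 0$, Proposition \ref{PROPSB1} supplies Parts 5, 6, 7, 9 and the splitting $D = D \oplus D$ of Part 2, as well as the relation $(D_{\cdot}DG)_{\cdot} = \tfrac{1}{2}\brs{DG}^{2}G$ on $\mathfrak{G}_{0}$; Proposition \ref{PROPSB2} supplies Part 1, Part 3, and the $t$-independence completing Part 2. For the remainder I combine these with $Q = \tfrac{2}{t}$: since $\tr_{g}H^{2} = 0$ (Proposition \ref{PROPSB1}) and $\dim M = 1$ force $i_{v}H = 0$, we get $H = H^{\mathfrak{G}}$, and $\frac{\partial H}{\partial t} = 0$ follows from the vanishing $\frac{\partial H}{\partial t}(\eta_{1},\eta_{2},\eta_{3}) = 0$ in the proof of Proposition \ref{PROPEVH} together with $\frac{\partial A}{\partial t}=0$, giving Part 4; moreover $\tr_{g}H^{2} = 0$ upgrades $Q = \tfrac{2}{t}$ to $\brs{DG}^{2} = \tfrac{2}{t}$, and feeding $\frac{\partial g}{\partial t}(v,v) = \tfrac{1}{2}Q = \tfrac{1}{t}$ into the conformal evolution of the one-dimensional metric yields $g = t\,g|_{t=1}$, which is Part 8, while substituting $\brs{DG}^{2} = \tfrac{2}{t}$ into the relation on $\mathfrak{G}_{0}$ completes Part 10.

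The step I expect to be the main obstacle is the passage to the limit in the second paragraph: one must verify that the (a priori merely uniform) convergence of the rescaled flows is strong enough—control of at least two spatial and one time derivative—both to evaluate $\mathcal{I}$ in the limit and to guarantee that the limit is a genuine solution of (\ref{EQ}) to which Proposition \ref{p:Imonotone} applies. This is exactly where the higher-order type III estimates of Proposition \ref{p:curvatureev} and Theorem \ref{t:lte3+1}, combined with a Ricci-flow-type compactness theorem, become essential.
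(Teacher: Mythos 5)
Your proposal is correct and follows essentially the same route as the paper: invariance of $\mathcal{I}$ under scaling and bundle isomorphisms together with the monotonicity of Proposition \ref{p:Imonotone} force $\mathcal{I}$ to be constant along the limit flow, whence $S_{B}=0$ and $\brs{DG}^{2}+\tr_{g}H^{2}=\tfrac{2}{t}$, and the conclusions are then read off from Propositions \ref{PROPSB1} and \ref{PROPSB2}. You in fact make explicit several points the paper leaves terse, namely the pointwise nonnegativity of $S_{B}$, the derivations of Parts 4), 8) and 10), and the regularity needed for the limit to be a genuine solution of (\ref{EQ}).
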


\begin{proof}
Using the properties of the functional defined in Definition \ref{d:energy}, we have 
\begin{align*}
\mathcal{I}(\bar{g}_{\infty}(t),\bar{H}_{\infty}(t),t)
  				&=\lim_{k\rightarrow \infty}\mathcal{I}(\psi_{k}^{*}(\bar{g}_{k})(t),\psi_{k}^{*}(\bar{H}_{k})(t),t)
							\\&=	\lim_{k\rightarrow \infty}\mathcal{I}(\bar{g}_{k}(t),\bar{H}_{k}(t),t)
		\\&= \lim_{k\rightarrow \infty}\mathcal{I}(s_{k}^{-1}\bar{g}(s_{k}t),s_{k}^{-1}\bar{H}(s_{k}t),t)
		   \\&= \lim_{k\rightarrow \infty}\mathcal{I}(\bar{g}(s_{k}t),\bar{H}(s_{k}t),s_{k}t)
		       \\& = \lim_{ t\rightarrow \infty}\mathcal{I}(\bar{g}(t),\bar{H}(t),t).
						\end{align*}
	Note the existence of the last limit follows from the existence of the second to last limit and the fact that $\mathcal{I}(\bar{g}(t),\bar{H}(t),t)$ is a nonincreasing function of $t$ by Proposition \ref{p:Imonotone}. (One may also directly use that $\mathcal{I} \geq 0$.)
	
	Hence,  $\mathcal{I}(\bar{g}_{\infty}(t),\bar{H}_{\infty}(t),t)$ is constant in $t$. Using Proposition \ref{p:Imonotone}, 
	  the following then holds true in the limit: 
	     \begin{align*}
					   0& =  \frac{d}{dt}\mathcal{I}(\bar{g}(t), \bar{H}(t),t)
					      \\&= -t\int_{M}S_{B}\frac{dV_{g}}{\sqrt{t}} -\frac{t}{4}\int_{M}\lp{|DG|^{2} + \tr_{g}H^2-\frac{2}{t}}^{2}\frac{dV_{g}}{\sqrt{t}}.     					      
\end{align*}
Since $S_{B}>0$, $|DG|^{2} + \tr_{g}H^2=\frac{2}{t}$ and $S_{B}=0$ in the limit. Combining the results of Propositions \ref{PROPSB1} and \ref{PROPSB2}, proves the theorem.
\end{proof}

\begin{cor}\label{CORLIMIT} Let $(\bar{g}_{\infty}(t),\bar{H}_{\infty}(t))$ be the limiting generalized Ricci flow solution given in Theorem \ref{THMLIMIT}. 

1) The solution  in the limit satisfies the following differential equations:   
\begin{gather}
\begin{split}
    &   \frac{d}{dt}|[,]|^{2}= -\frac{3}{2}|[,]|^{4} -\frac{1}{6}|H^{\mathfrak{G}}|^{2}|[,]|^{2}
 \\&   \frac{d}{dt}|H^{\mathfrak{G}}|^{2}= -\frac{1}{2}|H^{\mathfrak{G}}|^{4} -\frac{1}{2}|H^{\mathfrak{G}}|^{2}|[,]|^{2}    
 \\&  \frac{\partial G}{\partial t}(\eta,\eta')=\lp{\frac{1}{2}|[,]|^{2}+\frac{1}{6} |H^{\mathfrak{G}}|^{2}}G(\eta,\eta'), \text{ for all } \eta, \eta' \in \mathfrak{G}_{0}
\\& \frac{\partial G}{\partial t}(\eta,\eta')=\lp{-\frac{1}{2}|[,]|^{2}+\frac{1}{6} |H^{\mathfrak{G}}|^{2}}G(\eta,\eta'), \text{ for all } \eta, \eta' \in Z(\mathfrak{G}).
\end{split}
\end{gather}

2)  If $H^{\mathfrak{G}}=0$ and $\mathcal{G}$ is abelian then $\frac{\del G}{\del t}=0$ in the limit.\\

3) If $H^{\mathfrak{G}}=0$ and $\mathcal{G}$ is nonabelian then the following holds true in the limit:
\begin{align*}
      &  \quad  a) \ |[,]|^{2}= \frac{1}{at+C},\text{ where } a=\frac{3}{2} \text{ and } C \geq 0 
        \\& \quad b) \ G(\eta,\eta')=\lp{\frac{at+C}{a+C}}^{\frac{1}{3}}G|_{t=1}        
                           (\eta,\eta'), \text{ for all } \eta,\eta' \in \mathfrak{G}_{0}
        \\& \quad c) \ G(\eta,\eta')=\lp{\frac{at+C}{a+C}}^{-\frac{1}{3}}G|_{t=1}
                           (\eta,\eta'), \text{ for all } \eta,\eta' \in Z(\mathfrak{G}).
\end{align*}
\end{cor}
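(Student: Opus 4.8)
The plan is to derive all three parts by feeding the rigidity of the limit — Theorem \ref{THMLIMIT}, i.e. Propositions \ref{PROPSB1} and \ref{PROPSB2} — into the evolution equations of Section \ref{s:globex} and the flow equations (\ref{EQ}). The facts I will use repeatedly are that in the limit $|[,]|^2$, $|H^{\mathfrak{G}}|^2$ and $|DG|^2$ are spatially constant (so their Laplacians vanish on the one-dimensional base), that $DG(\cdot,\cdot)=0$, $\tr_g H^2=0$ and $H=H^{\mathfrak{G}}$, and that $DG$ is supported on $\mathfrak{G}_0\times\mathfrak{G}_0$ with $DG|_{Z(\mathfrak{G})}=0$ and $DG(\eta,\eta')=0$ for $\eta\in\mathfrak{G}_0$, $\eta'\in Z(\mathfrak{G})$. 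Throughout I work in the orthonormal frame $\{\eta_1,\eta_2,\eta_3\}$ with $\eta_3\in Z(\mathfrak{G})$ and $[\eta_1,\eta_2]$ proportional to $\eta_3$, where $|[\eta_1,\eta_2]|^2=\tfrac12|[,]|^2$.

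For the first equation of Part 1 I would start from Proposition \ref{PROPbra2}. Spatial constancy kills $\Delta|[,]|^2$, leaving $S_A$ to evaluate. Since $H=H^{\mathfrak{G}}$ has no leg in $TM$, the term $|H(\pi_{TM}*,\cdot_1,\cdot_2)[\cdot_1,\cdot_2]|^2$ vanishes; the delicate point is $|T|^2=0$, which I would verify by evaluating $T$ on the frame above, using $DG|_{Z(\mathfrak{G})}=0$, the mixed vanishing $DG(\mathfrak{G}_0,Z(\mathfrak{G}))=0$, and $DG(\eta_1,\eta_1)+DG(\eta_2,\eta_2)=0$; each component collapses to a multiple of $DG(\eta_1,\eta_1)+DG(\eta_2,\eta_2)=0$ or to a bracket with $\eta_3$, giving $T\equiv 0$. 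This leaves $S_A=\tfrac16|H^{\mathfrak{G}}|^2|[,]|^2$ and the stated ODE. The second equation is immediate from Proposition \ref{PROPEVH} upon setting $\Delta|H^{\mathfrak{G}}|^2=0$, $|DG(\cdot,\cdot)|^2=0$ and $\tr_g H^2=0$.

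For the two $\partial G/\partial t$ equations I would expand the first line of (\ref{EQ}) on the frame. The first two terms cancel, since Proposition \ref{PROPSB1}(4) identifies $(D_\cdot DG)_\cdot(\eta,\eta')$ with exactly the contraction $D_{\cdot_1}G(\eta,\cdot_2)D_{\cdot_1}G(\eta',\cdot_2)$ appearing as the second term. The bracket terms follow from the Heisenberg relations: $\tr_G G([\cdot,\eta],[\cdot,\eta'])$ contributes $\tfrac12|[,]|^2 G(\eta,\eta')$ on $\mathfrak{G}_0$ and $0$ on $Z(\mathfrak{G})$, while $-\tfrac12\tr_G G([\cdot_1,\cdot_2],\eta)G([\cdot_1,\cdot_2],\eta')$ contributes $0$ on $\mathfrak{G}_0$ and $-\tfrac12|[,]|^2 G(\eta,\eta')$ on $Z(\mathfrak{G})$. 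Finally, since $H=H^{\mathfrak{G}}$ is top-degree on the three-dimensional fiber, $H^2$ is isotropic, $H^2(\eta,\eta')=\tfrac13|H^{\mathfrak{G}}|^2 G(\eta,\eta')$ (a constant I would fix against the paper's normalization using the $H$-computation inside the proof of Proposition \ref{PROPbra2}), so $\tfrac12 H^2$ adds $\tfrac16|H^{\mathfrak{G}}|^2 G(\eta,\eta')$ on all of $\mathfrak{G}$. Summing yields the two displayed formulas.

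Parts 2 and 3 are then ODE consequences. In Part 2, abelian $\mathcal{G}$ forces $|[,]|^2=0$, the hypothesis gives $|H^{\mathfrak{G}}|^2=0$, and the $Z(\mathfrak{G})=\mathfrak{G}$ equation gives $\partial G/\partial t=0$. In Part 3, setting $|H^{\mathfrak{G}}|^2=0$ reduces the bracket equation to $\tfrac{d}{dt}|[,]|^2=-\tfrac32|[,]|^4$; with $w=|[,]|^{-2}$ this is $w'=\tfrac32$, so $|[,]|^2=(at+C)^{-1}$ with $a=\tfrac32$, and positivity on $(0,\infty)$ forces $C\ge 0$. Substituting into $\partial_t\ln G(\eta,\eta')=\pm\tfrac12|[,]|^2$ (with $+$ on $\mathfrak{G}_0$, $-$ on $Z(\mathfrak{G})$) and integrating from $1$ to $t$ produces the powers $(\tfrac{at+C}{a+C})^{\pm 1/3}$, matching b) and c). I expect the main obstacle to be the algebraic reductions of Part 1 — in particular the vanishing $|T|^2=0$ and pinning down the isotropy constant of $H^2$ against the paper's normalization — since everything downstream is a routine integration of scalar ODEs.
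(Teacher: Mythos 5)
Your proposal is correct and follows essentially the same route as the paper, whose proof is a one-line citation of Propositions \ref{PROPbra2} and \ref{PROPEVH}, Theorem \ref{THMLIMIT}, and the flow equations (\ref{EQ}) — precisely the computation you carry out, with the details you flag (the vanishing $T\equiv 0$ via $DG|_{Z(\mathfrak{G})}=0$, the mixed vanishing, tracelessness on $\mathfrak{G}_0$, and the isotropy $H^{2}(\eta,\eta')=\tfrac{1}{3}|H^{\mathfrak{G}}|^{2}G(\eta,\eta')$, consistent with $|H^{\mathfrak{G}}|^{2}=6\,H(\eta_{1},\eta_{2},\eta_{3})^{2}$) all checking out against the paper's conventions.
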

   
\begin{proof}
The proof immediately follows from Propositions \ref{PROPbra2} and \ref{PROPEVH}, Theorem \ref{THMLIMIT} and (\ref{EQ}).
\end{proof}
  
\bibliographystyle{plain}

 \end{document}